



\documentclass[final,1p,times]{elsarticle}

\usepackage{amsfonts}
\usepackage{graphicx}
\usepackage{amsmath}
\usepackage{graphicx}
\usepackage{amssymb}
\usepackage[colorlinks]{hyperref}
\usepackage{color}
\usepackage{xcolor}

\newtheorem{theorem}{Theorem}

\newtheorem{corollary}[theorem]{Corollary}

\newtheorem{definition}[theorem]{Definition}

\newtheorem{lemma}[theorem]{Lemma}

\newtheorem{remark}[theorem]{Remark}
\newenvironment{proof}[1][Proof]{\textbf{#1.} }{\ \rule{0.5em}{0.5em}}


\usepackage{amssymb}



\newcommand{\correction}[2]{#2}
\newcommand{\cor}[1]{#1}

\newcommand{\mymarginpar}[1]{}

\journal{Communications in Nonlinear Science and Numerical Simulation}

\begin{document}
\begin{frontmatter}



\title{
Computer Assisted Proof of Drift Orbits Along Normally Hyperbolic Manifolds II: Application to the Restricted Three Body Problem
}


\author[agh]{Maciej J. Capi\'nski \fnref{mjcA}}
\address[agh]{Faculty of Applied Mathematics, AGH University of Science and Technology, al. Mickiewicza 30, 30-059 Krak\'ow, Poland.}
\cortext[cor]{Corresponding author.}
\fntext[mjcA]{Partially supported by the NCN grants 2018/29/B/ST1/00109, 2019/35/B/ST1/00655.}

\author[agh]{Natalia Wodka \corref{cor}\fnref{mjcA}}

\begin{abstract}
We present a computer assisted proof of diffusion in the Planar Elliptic Restricted Three Body Problem. We treat the elliptic problem as a perturbation of the circular problem, where the perturbation parameter is the eccentricity of the primaries. The unperturbed system preserves energy, and we show that for sufficiently small perturbations we have orbits with explicit  energy changes, independent from the size of the perturbation. 
The result is based on shadowing of orbits along transversal intersections of stable/unstable manifolds of a normally hyperbolic cylinder.
\end{abstract}

\begin{keyword}
Normally hyperbolic manifold \sep Arnold diffusion \sep scattering map \sep topological shadowing \sep computer assisted proof \sep three body problem

\MSC[2010] 37J25 \sep 37J40

\end{keyword}

\end{frontmatter}



\section{Introduction}


Autonomous Hamiltonian systems have an integral of motion given by the Hamiltonian. We shall refer to this integral as the energy. Our question is whether for an arbitrarily small perturbation we can have orbits, which change in energy by a prescribed constant independent form the size of the perturbation.

In our case, the system of interest is the Planar Restricted Three Body Problem that describes the motion of a small massless particle under the gravitational pull of two large bodies. They rotate on Keplerian orbits and we call them primaries. The motion of the massless particle is on the same plane as the primaries. When the Keplerian orbit is circular, then in the coordinate frame which rotates with the primaries we have an autonomous Hamiltonian system;  the Planar Circular Restricted Three Body Problem (PCR3BP). When the Keplerian orbits are elliptic, then we deal with the Elliptic Problem (PER3BP), which is a time dependent Hamiltonian system.

The system we study has a normally hyperbolic invariant manifold (NHIM) prior to the perturbation, with stable and unstable manifolds which intersect transversally. This means that we consider an `apriori-chaotic' system. (This is a much simpler problem than starting with a fully integrable system; as in the famous example of Arnold  \cite{MR0163026}.) Our diffusion mechanism is based on establishing the existence of trajectories that shadow the intersections of the stable and unstable manifolds and change energy under the influence of the perturbation.

Our result is based on the geometric and shadowing results of Delshams, Gidea, de la Llave and Seara. At the core of the mechanism is the scattering map \cite{DLS1}; that is a map which for a point on the NHIM  assigns  another point from the NHIM, if their unstable and stable fibres intersect in a nontrivial way. The benefit of studying a scattering map is twofold. 
Firstly, one can exploit perturbative techniques to study the effect of the parameter on the scattering map. This is typically done by using Melnikov integrals in the case of continuous systems, or Melnikov sums in the case of discrete systems \cite{DLS1, MR1373998}. Once the effects of the perturbation on the scattering map is established, the second benefit is that there exist true orbits of the system, which will shadow the `pseudo orbits' of the scattering map \cite{GLS}. This means that if one can prove existence of `pseudo orbits' of the scattering map which have macroscopic changes in energy, then this ensures the existence of true trajectories of the system that will do the same.

The strategy for proving diffusion using this scheme is described in  \cite{DLS1, MR1373998,GLS,MR4160091}. 
For our proof we will use the results from \cite{Jorge}, which provides a formulation for these methods, that can be implemented to obtain a computer assisted proof. The main feature of  \cite{Jorge} is that the assumptions needed for the diffusion mechanism can be validated in a finite number of steps by checking various bounds on the properties of the system.

In our case the components that need to be established are the following. We  establish bounds on the family of Lyapunov orbits in the PCR3BP, which constitute our NHIM prior to the perturbation. Next we establish bounds for the local stable/unstable manifolds of the NHIM, and prove that they intersect transversally. We  ensure that the scattering map is well defined and obtain explicit bounds on its dynamics. We also validate twist conditions, which ensure that the NHIM after perturbation contains a Cantor set of KAM tori. Then we can apply the theorems from \cite{Jorge}  to prove diffusion. All the above steps can be established with the assistance of rigorous, interval arithmetic based estimates, performed with the aid of a computer. 

For proofs of diffusion in the PER3BP a computer assisted approach is not strictly necessary. The paper \cite{MR3927089} provides an analytic proof of diffusion in the PER3BP. This result also used the scattering map theory and shadowing as the mechanism. 
To obtain the analytic proof   though
\cite{MR3927089} required that the mass of one of the primaries is sufficiently small, and that the angular momentum of the massless particle is sufficiently large.
The difference with the result from the current paper is that we can work with the explicit mass of Jupiter and with the explicit energy of a massless particle, which corresponds to that of the comet Oterma. As in \cite{MR3927089} we have to assume that the eccentricity of the system is sufficiently small. 

A recent result which works with explicit masses, explicit energy and explicit interval of eccentricities that starts from zero and reaches a physical value is given in \cite{CG}. This is based on a computer assisted construction of `correctly aligned windows'. The difference is that in the current paper we only need to establish the intersections of the manifolds for the unperturbed system, and check some explicit conditions from \cite{Jorge}. The shadowing of orbits is then automatically taken care of by \cite{GLS}, and we do not need to carry out the explicit construction of windows as in \cite{CG}. Our results are weaker though than \cite{CG}.
\correction{comment 17}{}

The geometric setup for our method follows very closely that from  \cite{MR2785975, MR3604613}. There are some small differences though, the main being that we use Poincar\'e maps and compute finite Melnikov sums, instead of Melnikov integrals. Our original plan was to follow directly the setup from \cite{MR3604613}, but we have found that computing sharp bounds on integrals along trajectories is more difficult than computing bounds on sums along discrete trajectories of Poincar\'e maps. \correction{comment 1}{
The key difference between the current result and \cite{MR3604613} is that in \cite{MR3604613} some of the assumptions of the theorems which ensure the diffusion were validated by using non-rigorous numerics.  In this paper we conduct a proof by means of rigorous, interval arithmetic based validation. For this we need to provide a computer assisted proof of the existence of the normally hyperbolic manifold and the twist condition needed for the application of the KAM theorem on that manifold. We also provide a computer assisted proof that the scattering is well defined, which involves establishing transversal intersections of its stable/unstable manifolds. Finally, we provide rigorous interval arithmetic bounds on the Melnikov sums, which leads to diffusion.
}

Our tool for the computer assisted implementation\footnote{\cor{The code for the computer assisted proof is available on the personal web page of M. J. Capi\'nski.}}\correction{comment 4}{} is the CAPD\footnote{Computer Assisted Proofs in Dynamics: http://capd.ii.uj.edu.pl} library \cite{capd}. The tools which we use are quite standard: The existence of the family of Lyapunov orbits as well as the proof of their homoclinic orbits is done by exploiting symmetry properties of the PCR3BP, combined with parallel shooting and the Krawczyk method. To establish bounds on the stable and unstable manifolds and the transversality of their intersections needed to ensure that the scattering map is well defined we use cones. The assumptions of theorems \cite{Jorge} leading to diffusion can then be checked by computing sums along finite fragments of homoclinic orbits.

The paper is organised as follows. Section \ref{sec:prel} contains preliminaries, which include the Krawczyk method, introduction to the scattering map and the shadowing results for scattering maps, as well as a short introduction to  the restricted three body problem. In section \ref{sec:main} we state the main result of the paper, which is written in Theorem \ref{th:main}. Section \ref{sec:proof} contains the proof of the main theorem. Some of the technical issues are delegated to the Appendix.


\section{Preliminaries\label{sec:prel}}

\subsection{Notations}

For a set $U$ in a topological space we will denote its interior by
$\mathrm{int}U$, its closure by $\bar{U}$ and its boundary by $\partial U$.

We shall denote identity by $Id$. For a point $p$ expressed in some
coordinates $p=\left(  x,y\right)  $ we shall denote by $\pi_{x}\mathbf{\ }%
$and $\pi_{y}$ the projections onto the given coordinates, i.e. $\pi_{x}p=x$
and $\pi_{y}p=y$. For $p=\left(  p_{1},\ldots,p_{n}\right)  $ we will write
$\pi_{i}$ to denote the projection onto the $i$-th coordinate, i.e. $\pi
_{i}p=p_{i}$.

We write $\left\Vert \cdot\right\Vert _{\max}$ for the maximum norm in
$\mathbb{R}^{n}$, and for a matrix $A$ write $\left\Vert A\right\Vert _{\max}$
for the matrix norm induced by $\left\Vert \cdot\right\Vert _{\max}$.

We shall denote a $k$-dimensional torus as $\mathbb{T}^{k}$, with the
convention that $\mathbb{T}:=\mathbb{R}\,\mathrm{mod}\,2\pi$.

\subsection{Krawczyk method}

We refer to a cartesian product of intervals as an interval set. For a set
$U\subset\mathbb{R}^{n}$ we shall denote by $\left[  U\right]  $ an interval
set such that $U\subset\left[  U\right]  .$ We refer to such set as an
interval enclosure of $U$. An interval enclosure is not unique. In our
applications we shall consider interval enclosures of objects of interest;
namely: fixed points, periodic orbits, homoclinic orbits and invariant
manifolds. The smaller the interval enclosure is, the more accurate is our
bound on the object of interest. We shall refer to a matrix whose coefficients are intervals as an interval matrix.


For a $C^{1}$ function $F:\mathbb{R}^{n}\rightarrow\mathbb{R}^{n}$ we denote
by $\left[  DF\left(  X\right)  \right]  \subset\mathbb{R}^{n\times n}$ the
interval matrix enclosure of the derivatives of $F$ on $X$, namely we
consider
\[
\left[  DF\left(  X\right)  \right]  =\left\{  A=\left(  a_{i,j}\right)
:a_{ij}\in\left[  \inf_{x\in X}\frac{\partial f_{i}}{\partial x_{j}}\left(
x\right)  ,\sup_{x\in X}\frac{\partial f_{i}}{\partial x_{j}}\left(  x\right)
\right]  \right\}  .
\]
Below theorem, known as the Krawczyk method, can be used to establish bounds
on zeros of functions.

\begin{theorem}
\label{th:Krawczyk}\cite{MR1318950}Let $F:\mathbb{R}^{n}\rightarrow
\mathbb{R}^{n}$ be a $C^{1}$ function. Let $X\subset\mathbb{R}^{n}$ be an
interval set, let $x\in X$, let $C\in\mathbb{R}^{n\times n}$ be a linear
isomorphism and let
\[
K\left(  x,X,F\right)  :=x-CF\left(  x\right)  +\left(  Id-C\left[  DF\left(
X\right)  \right]  \right)  \left(  X-x\right)  .
\]
If%
\[
K\left(  x,X,F\right)  \subset\mathrm{int}X,
\]
then there exists a unique point $x^{\ast}\in X$ for which
\[
F\left(  x^{\ast}\right)  =0.
\]

\end{theorem}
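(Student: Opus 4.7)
The plan is to reduce the statement to a fixed-point problem for the Krawczyk operator $T(y):=y-CF(y)$. Since $C$ is a linear isomorphism, $F(y)=0$ if and only if $T(y)=y$, so it suffices to produce a unique fixed point of $T$ in $X$.

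The first step is to show $T(X)\subset K(x,X,F)$. Given $y\in X$, the segment from $x$ to $y$ lies in the convex set $X$, so the fundamental theorem of calculus applied to each component $f_i$ along that segment yields a matrix $A$ with entries $a_{ij}=\int_0^1 \partial_j f_i(x+t(y-x))\,dt$ satisfying $F(y)-F(x)=A(y-x)$. Each $a_{ij}$ is bounded by the extrema of $\partial_j f_i$ on $X$, so $A\in[DF(X)]$. Substituting into the definition of $T$,
\[
T(y)=x-CF(x)+(Id-CA)(y-x)\in K(x,X,F)\subset\mathrm{int}\,X.
\]
Thus $T$ continuously maps the compact convex set $X$ into itself, and Brouwer's fixed point theorem delivers some $x^{\ast}\in X$ with $T(x^{\ast})=x^{\ast}$, i.e.\ $F(x^{\ast})=0$.

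For uniqueness, suppose $y_1,y_2\in X$ with $F(y_1)=F(y_2)=0$. The same mean-value argument applied to the pair $(y_1,y_2)$ yields some $M\in[DF(X)]$ with $M(y_1-y_2)=0$, so the real task is to verify that every matrix in $[DF(X)]$ is nonsingular. This is where the strict inclusion $K(x,X,F)\subset\mathrm{int}\,X$ is used: it forces the interval matrix $Id-C[DF(X)]$ to contract the box $X-x$ (modulo the translation $-CF(x)$), which in turn implies $\rho(Id-CA)<1$ for every $A\in[DF(X)]$. Hence $CA=Id-(Id-CA)$ is invertible, and since $C$ is invertible so is $A$, giving $y_1=y_2$. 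This uniqueness step is the main obstacle, being the only place where the interior (rather than merely closed) inclusion is genuinely needed and where a spectral-radius argument for interval matrices is required; in practice I would invoke \cite{MR1318950} for the detailed bookkeeping rather than redo it from scratch.
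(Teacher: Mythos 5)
The paper offers no proof of this theorem---it is imported verbatim from \cite{MR1318950}---so the only question is whether your argument stands on its own. The existence half does: the componentwise integral mean value theorem on the segment from $x$ to $y$ (which lies in the convex interval set $X$) gives $F(y)-F(x)=A(y-x)$ with each entry of $A$ an average of the corresponding partial derivative, hence $A\in\left[DF(X)\right]$, so $T(y)=x-CF(x)+(Id-CA)(y-x)\in K(x,X,F)\subset X$ and Brouwer yields a zero of $F$ in $X$ because $C$ is invertible. In the uniqueness half you correctly isolate the crux (regularity of every $A\in\left[DF(X)\right]$), but the one assertion doing the work---that $K(x,X,F)\subset\mathrm{int}X$ forces $\rho(Id-CA)<1$---is stated, not proved, and then delegated to the reference. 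Two cautions there: first, $x$ is an arbitrary point of $X$, not its midpoint, so the standard comparison-matrix/Perron--Frobenius derivation of the spectral radius bound does not apply verbatim; second, you do not actually need $\rho<1$, only that no $B=Id-CA$ fixes a nonzero real vector.

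That weaker fact has a short self-contained proof which would close the gap. Write $X=\prod_i[a_i,b_i]$; if some $a_i=b_i$ then $\mathrm{int}X=\emptyset$ and the hypothesis is vacuous, so assume $a_i<b_i$ for all $i$. Suppose $Av=0$ with $v\neq0$, so $Bv=v$ for $B=Id-CA$. Set $\lambda:=\min_{i:v_i\neq0}(b_i-a_i)/|v_i|$, the length parameter of the longest segment of direction $v$ contained in the box $X-x$; choosing $u_i=a_i-x_i$ when $v_i>0$ and $u_i=b_i-x_i$ when $v_i<0$ gives $u$ and $u+\lambda v$ both in $X-x$. Then $x-CF(x)+Bu$ and $x-CF(x)+Bu+\lambda v$ both lie in $K(x,X,F)\subset\mathrm{int}X$. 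But $\mathrm{int}X$ is an open box with side lengths $b_i-a_i$, so two of its points can differ by $\lambda v$ only if $\lambda|v_i|<b_i-a_i$ for every $i$ with $v_i\neq 0$, i.e.\ $\lambda<\lambda$, a contradiction. Hence every $A\in\left[DF(X)\right]$ is nonsingular, and your mean-value step $M(y_1-y_2)=0$ with $M\in\left[DF(X)\right]$ finishes uniqueness. With this substitution your proof is complete and is the standard one.
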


\subsection{Normally hyperbolic invariant manifolds and the scattering map}
In this section we recall the results which we use for our diffusion mechanism. We follow the setup from \cite{Jorge}, which is based on the scattering map theory described in \cite{DLS1}, and the diffusion mechanism from \cite{GLS}, which is the main tool for our proof.

\begin{definition}
\label{def:nhim} 
Let $M$ be a smooth  $n$-dimensional \correction{comment 18}{Riemannian }manifold, and let  $f : M \to M$ be a $C^r$ diffeomorphism, with $r> 1$.
Let $\Lambda \subset M$ be a compact manifold
without boundary, invariant under $f$, i.e., $f(\Lambda )=\Lambda $ . We say that $\Lambda $ is a normally hyperbolic invariant
manifold (with symmetric rates) if there exists a constant $C>0,$ rates $%
0<\lambda <\mu ^{-1}<1$ and a $Tf$-invariant splitting for every $x\in
\Lambda $%
\begin{equation*}
T_x M=E_{x}^{u}\oplus E_{x}^{s}\oplus T_{x}\Lambda
\end{equation*}%
such that%
\begin{align}
v& \in E_{x}^{u}\Leftrightarrow \left\Vert Df^{k}(x)v\right\Vert \leq
C\lambda ^{-k}\left\Vert v\right\Vert ,\quad k\leq 0,
\label{eq:rate-cond-nhim1} \\
v& \in E_{x}^{s}\Leftrightarrow \left\Vert Df^{k}(x)v\right\Vert \leq
C\lambda ^{k}\left\Vert v\right\Vert ,\quad k\geq 0,
\label{eq:rate-cond-nhim2} \\
v& \in T_{x}\Lambda \Rightarrow \left\Vert Df^{k}(x)v\right\Vert \leq C\mu
^{|k|}\left\Vert v\right\Vert ,\quad k\in \mathbb{Z}.
\label{eq:rate-cond-nhim3}
\end{align}
\end{definition}

Let $d\left( x,\Lambda \right) $ stand for the distance between a point $x$
and the manifold $\Lambda $. 

Given a normally
hyperbolic invariant manifold and a suitable small tubular neighbourhood $%
U\subset M$ of $\Lambda $ one defines its local unstable and
local stable manifold \cite{MR292101} as%
\begin{align*}
W_{\Lambda }^{u}\left( f,U\right) & =\left\{ y\in M\,|\,f^{k}(y)\in U,d\left( f^{k}(y),\Lambda \right) \leq C_{y}\lambda
^{\left\vert k\right\vert },\,k\leq 0\right\} , \\
W_{\Lambda }^{s}\left( f,U\right) & =\left\{ y\in M\,|\,f^{k}(y)\in U,d\left( f^{k}(y),\Lambda \right) \leq C_{y}\lambda
^{k},\,k\geq 0\right\} ,
\end{align*}%
where $C_{y}$ is a positive constant, which can depend on $y$. We define the
(global) unstable and stable manifolds as%
\begin{equation*}
W_{\Lambda }^{u}\left( f\right) =\bigcup_{n\geq 0}f^{n}\left( W_{\Lambda
}^{u}\left( f,U\right) \right) ,\qquad W_{\Lambda }^{s}\left( f\right)
=\bigcup_{n\geq 0}f^{-n}\left( W_{\Lambda }^{s}\left( f,U\right) \right) .
\end{equation*}

The manifolds $W_{\Lambda }^{u}\left( f,U\right) $, $W_{\Lambda }^{s}\left(
f,U\right) $, $W_{\Lambda }^{u}\left( f\right) $ and $W_{\Lambda }^{s}\left(
f\right) $ are foliated by%
\begin{align*}
W_{x}^{u}\left( f,U\right) & =\left\{ y\in M\,|\,f^{k}(y)\in
U,d(f^{k}(y),f^{k}(x))\leq C_{x,y}\lambda ^{\left\vert k\right\vert
},\,k\leq 0\right\} , \\
W_{x}^{s}\left( f,U\right) & =\left\{ y\in M\,|\,f^{k}(y)\in
U,d(f^{k}(y),f^{k}(x))\leq C_{x,y}\lambda ^{k},\,k\geq 0\right\} ,
\end{align*}%
where $x\in \Lambda $ and $C_{x,y}$ is a positive constant, which can depend
on $x$ and $y$,%
\begin{equation*}
W_{x}^{u}\left( f\right) =\bigcup_{n\geq 0}f^{n}\left( W_{f^{-n}\left(
x\right) }^{u}\left( f,U\right) \right) ,\qquad W_{x}^{s}\left( f\right)
=\bigcup_{n\geq 0}f^{-n}\left( W_{f^{n}\left( x\right) }^{s}\left(
f,U\right) \right) .
\end{equation*}

Let%
\begin{equation}
l<\min \left\{ r,\frac{|\log \lambda |}{\log \mu }\right\} .
\label{eq:nhim-smoothness}
\end{equation}%
The manifold $\Lambda $ is $C^{l}$ smooth, the manifolds $W_{\Lambda
}^{u}\left( f\right) ,W_{\Lambda }^{s}\left( f\right) $ are $C^{l-1}$ and $%
W_{x}^{u}\left( f\right) $, $W_{x}^{s}\left( f\right) $ are $C^{r}$ \cite%
{DLS1}. Normally hyperbolic manifolds, as well as their stable and unstable manifolds and their fibres persist under small perturbations \correction{comment 11}{\cite{MR292101,MR426056,MR343314}.}

From now let $(M,\omega)$ be a smooth symplectic manifold. Let us assume that $\Lambda\subset M$ is a normally hyperbolic invariant manifold for 
a $C^r$ symplectic map $f:M\to M$, where $r>1$. We assume that $\Lambda$ is even dimensional and symplectic with the symplectic form $\omega|_{\Lambda}$, and that $f|_{\Lambda}$ is symplectic on $\Lambda$. 
We define two maps, 
\begin{align*}
\Omega_{+} & :W_{\Lambda}^{s}(f)\rightarrow\Lambda, \\
\Omega_{-} & :W_{\Lambda}^{u}\left( f\right) \rightarrow\Lambda,
\end{align*}
where $\Omega_{+}(x)=x_{+}$ iff $x\in W_{x_{+}}^{s}\left( f\right) $, and $%
\Omega_{-}(x)=x_{-}$ iff $x\in W_{x_{-}}^{u}\left( f\right) .$ These are
referred to as the wave maps.

\begin{definition}
\label{def:homoclinic-channel} We say that a manifold $\Gamma \subset
W_{\Lambda }^{u}\left( f\right) \cap W_{\Lambda }^{s}\left( f\right) $ is a
homoclinic channel for $\Lambda $ if the following conditions hold:

\begin{itemize}
\item[(i)] \label{itm:homoclinic-channel-c1} for every $x\in \Gamma $ 
\begin{align}
T_{x}W_{\Lambda }^{s}\left( f\right) + T_{x}W_{\Lambda }^{u}\left(
f\right) & =T_x M,  \label{eq:scatter-cond-1} \\
T_{x}W_{\Lambda }^{s}\left( f\right) \cap T_{x}W_{\Lambda }^{u}\left(
f\right) & =T_{x}\Gamma ,  \label{eq:scatter-cond-2}
\end{align}

\item[(ii)] \label{itm:homoclinic-channel-c2} the fibres of $\Lambda$
intersect $\Gamma$ transversally in the following sense%
\begin{align}
T_{x}\Gamma\oplus T_{x}W_{x_{+}}^{s}\left( f\right) & =T_{x}W_{\Lambda
}^{s}\left( f\right) ,  \label{eq:scatter-cond-3} \\
T_{x}\Gamma\oplus T_{x}W_{x_{-}}^{u}\left( f\right) & =T_{x}W_{\Lambda
}^{u}\left( f\right) ,  \label{eq:scatter-cond-4}
\end{align}
for every $x\in\Gamma$,

\item[(iii)] the wave maps $(\Omega_{\pm})|_{\Gamma}:\Gamma\rightarrow
\Lambda$ are diffeomorphisms onto their image.
\end{itemize}
\end{definition}

\begin{definition}\label{def:scattering-map}
Assume that $\Gamma$ is a homoclinic channel for $\Lambda$ and let 
\begin{equation*}
\Omega_{\pm}^{\Gamma}:=\left( \Omega_{\pm}\right) |_{\Gamma}.
\end{equation*}
We define a scattering map $\sigma^{\Gamma}$ for the homoclinic channel $\Gamma$ as 
\begin{equation*}
\sigma^{\Gamma}:=\Omega_{+}^{\Gamma}\circ\left( \Omega_{-}^{\Gamma}\right)
^{-1}:\Omega_{-}^{\Gamma}\left( \Gamma\right) \rightarrow\Omega_{+}^{\Gamma
}\left( \Gamma\right) .
\end{equation*}
\end{definition}
We have the following theorem, which is the basis for the diffusion mechanism from the subsequent section.
\begin{theorem} \cite{GLS}\label{th:shadowing}
\correction{comment 2}{
Assume that $f:\mathbb{R}^{n}\rightarrow 
\mathbb{R}^{n}$ is a sufficiently smooth map, $\Lambda\subset\mathbb{R}^{n}$
is a normally hyperbolic invariant manifold with stable and unstable
manifolds which intersect transversally along a homoclinic channel $\Gamma
\subset\mathbb{R}^{n},$ and $\sigma$ is the scattering map associated to $%
\Gamma$.}\correction{comment 19}{}

\cor{Assume that $f$ preserves measure absolutely continuous with respect to the
Lebesgue measure on $\Lambda$, and that $\sigma$ sends positive measure sets
to positive measure sets.

Let $m_{1},\ldots,m_{n}\in\mathbb{N}$ be a fixed sequence of integers. Let $%
\left\{ x_{i}\right\} _{i=0,\ldots,n}$ be a finite pseudo-orbit in $\Lambda$%
, of the form%
\begin{equation}
x_{i+1}=f^{m_{i}}\circ\sigma\left( x_{i}\right) ,\qquad i=0,\ldots
,n-1,\,n\geq1,   \label{eq:pseudo-orbit}
\end{equation}
that is contained in some open set $\mathcal{U}\subset\Lambda$ with almost
every point of $\mathcal{U}$ recurrent for $f|_{\Lambda}$. (The points $%
\left\{ x_{i}\right\} _{i=0,\ldots,n}$ do not have to be themselves
recurrent.)

Then for every $\delta>0$ there exists an orbit $\left\{ z_{i}\right\}
_{i=0,\ldots,n}$ of $f$ in $\mathbb{R}^{n}$, with $z_{i+1}=f^{k_{i}}\left(
z_{i}\right) $ for some $k_{i}>0$, such that $d\left( z_{i},x_{i}\right)
<\delta$ for all $i=0,\ldots,n$.
}
\end{theorem}

\begin{remark}\label{rem:finite-number-scatter}
The result can be immediately extended to the case where we have a finite
number of scattering maps $\sigma _{1},\ldots ,\sigma _{L}$ to shadow
\begin{equation*}
x_{i+1}=f^{m_{i}}\circ \sigma _{\alpha _{i}}\left( x_{i}\right) ,\qquad
i=0,\ldots ,l-1,\,l\geq 1,
\end{equation*}%
for two prescribed sequences $m_{1},\ldots ,m_{l}\in \mathbb{N}$ and $\alpha
_{1},\ldots ,\alpha _{l}\in \left\{ 1,\ldots ,L\right\} $; see \cite[Theorem
3.7]{GLS}.
\end{remark}
\cor{
\begin{remark} In the setting of the restricted three body problem $\Lambda$ will be a normally hyperbolic cylinder with boundary, but we will embed it in a two dimensional torus. Details are given in section \ref{sec:diffusion-Hamiltonian}. We will therefore be in a setting where $\Lambda$ is a compact manifold without boundary. 
\end{remark}}

\cor{
\begin{remark}
\label{rem:symplectic} 
If $\Lambda$  has finite measure and $f$ is measure-preserving on $\Lambda$ then by the Poincar\'{e}
recurrence theorem we can take $\mathcal{U}=\Lambda$.
\end{remark}
}


\section{Diffusion mechanism}\label{sec:diff-mech}

In this section we recall the results from \cite{Jorge}. 
These are based on \cite{GLS}, but were adapted in  \cite{Jorge} to allow for computer assisted validation of the required assumptions.

\subsection{Diffusion for maps}
\cor{Let 
\[f_{\varepsilon}:\mathbb{R}^2 \times \mathbb{T} ^2 \to \mathbb{R}^2 \times \mathbb{T}^2 \]
be a family of smooth maps, which is smoothly parameterised by $\varepsilon\in \mathbb{R}$. We shall use the following notations for our coordinates: $(u,s,I,\theta)\in \mathbb{R}^2 \times \mathbb{T}^2$. The coordinates $u$ and $s$ stand for the `unstable' and `stable' coordinates
of $\Lambda_{0}$, respectively, and $T,\theta$ will play the role of `central' coordinates.} 

We assume that for $\varepsilon=0$ the coordinate $I$ is preserved by the map
\begin{equation}
\pi_{I}f_{0}\left(  z\right)  =\pi_{I}z\qquad\text{for all }z\in\tilde{\Sigma
}. \label{eq:energy-preserved}%
\end{equation}
and that
\begin{equation}
\Lambda_{0}=\left\{  \left(  u,s,I,\theta\right)  :u=s=0,  \cor{I \in \mathbb{T}, \theta \in \mathbb{T}}\right\}  , \label{eq:Lambda0-for-maps}
\end{equation}
is a normally hyperbolic invariant manifold for $f_0$. \cor{Note that $\Lambda_{0}$ is a two dimensional torus, hence it is compact without boundary. } \correction{comment 24}{}\correction{comment 26}{}

\cor{By the normally hyperbolic invariant manifold theorem $\Lambda_0$ will be perturbed to an $f_{\varepsilon}$ invariant torus $\Lambda_{\varepsilon}$, for sufficiently small $\varepsilon$. We shall assume that $f_{\varepsilon}$ are measure preserving on $\Lambda_{\varepsilon}$.  }

Let $g:\mathbb{R}\times\mathbb{R}^{3}\times\mathbb{T}\rightarrow\mathbb{R}%
^{3}\times\mathbb{T}$ be defined as\correction{comment 23}{}\correction{comment 25}{}\cor{
\begin{align*}
g\left(0,x\right) & := \frac{\partial f_{\varepsilon}}{\partial \varepsilon}(x)|_{\varepsilon=0}, \\
g\left(  \varepsilon,x\right)  & :=\frac{1}{\varepsilon}\left(  f_{\varepsilon
}(x)-f_{0}(x)\right)  \qquad \mbox{for } \varepsilon \ne 0.
\end{align*}}
Then%
\[
f_{\varepsilon}(x)=f_{0}(x)+\varepsilon g(\varepsilon,x).
\]

The following theorem provides conditions under which for any sufficiently
small $\varepsilon>0$ there exists a point $x_{\varepsilon}$ and a number of
iterates $n_{\varepsilon}$ for which
\begin{equation}
\pi_{I}\left(  f_{\varepsilon}^{n_{\varepsilon}}\left(  x_{\varepsilon
}\right)  -x_{\varepsilon}\right)  >1. \label{eq:diffusion-orbit-condition}%
\end{equation}
We first give a definition and follow with the statement of the theorem.

\begin{figure}[ptb]
\begin{center}
\includegraphics[height=2.6cm]{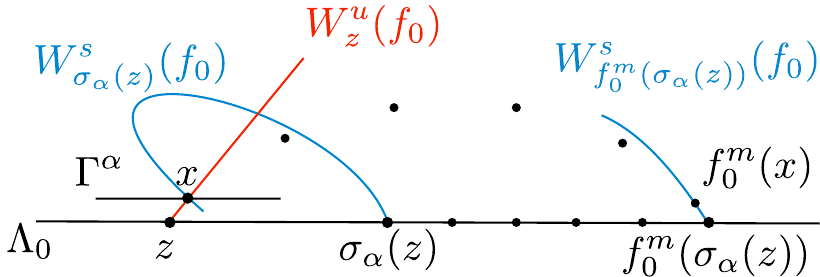}
\end{center}
\caption{The setting for Theorem \ref{th:strip-up}. \cite{Jorge}}%
\label{fig:main-thm}%
\end{figure}

\begin{definition}\label{def:strip}
Consider the topology on $\Lambda_{0}\cap\left\{  I\in\left[  0,1\right]
\right\}  $\footnote{\cor{Recall that $\mathbb{T} = \mathbb{R}/2\pi$, so $[0,1]$ is a closed interval in $\mathbb{T}$.}} induced by $\Lambda_{0}$. We say that an open set $S\subset
\Lambda_{0}\cap\left\{  I\in\left[  0,1\right]  \right\}  $ is a strip in
$\Lambda_{0}$ iff%
\[
S\cap\left\{  z\in\Lambda_{0}:\pi_{I}z=\iota\right\}  \neq\emptyset
\qquad\text{for any }\iota\in\left[  0,1\right]  .
\]

\end{definition}


\begin{theorem}
\cite{Jorge}\label{th:strip-up} Assume that there is a neighborhood $U$ of
$\Lambda_{0}$ and a positive constant $L_{g}$ such that for every $z\in\Lambda_{0},x_{u}\in W_{z}^{u}(f_{0},U),x_{s}\in W_{z}^{s}(f_{0},U),$ 
\begin{equation}
\begin{array}
[c]{c}%
\left\vert \pi_{I}\left(  g(0,x_{u})-g\left(  0,z\right)  \right)
\right\vert \leq L_{g}\left\Vert x_{u}-z\right\Vert ,\medskip\\
\left\vert \pi_{I}\left(  g(0,x_{s})-g\left(  0,z\right)  \right)
\right\vert \leq L_{g}\left\Vert x_{s}-z\right\Vert.
\end{array}
\label{eq:Lip-g-assumption}%
\end{equation}
Assume also that there exist positive constants $C,\lambda$, where $\lambda
\in(0,1)$, such that for every $z\in\Lambda_{0}$ and every $x_{u}\in W_{z}^{u}(f_{0}%
,U),x_{s}\in W_{z}^{s}(f_{0},U)$ we have
\begin{equation}%
\begin{array}
[c]{lll}%
\left\Vert f_{0}^{n}\left(  z\right)  -f_{0}^{n}\left(  x_{u}\right)
\right\Vert <C\lambda^{\left\vert n\right\vert } & \qquad & \text{for all
}n\leq0,\medskip\\
\left\Vert f_{0}^{n}\left(  z\right)  -f_{0}^{n}\left(  x_{s}\right)
\right\Vert <C\lambda^{n} &  & \text{for all }n\geq0.
\end{array}
\label{eq:contraction-expansion-bounds}%
\end{equation}
Assume also that for $\varepsilon=0$ we have the sequence of scattering maps
$\sigma_{\alpha}:\mathrm{dom}\left(  \sigma_{\alpha}\right)  \rightarrow
\Lambda_{0}$ for $\alpha=1,\ldots,L$. Let $S^{+}\subset\Lambda_{0}$ be a
strip. Assume that for every $z\in\overline{S^{+}}$

\begin{enumerate}
\item there exists an $\alpha\in\left\{  1,\ldots,L\right\}  $ for which
$z\in\mathrm{dom}\left(  \sigma_{\alpha}\right)  $, and there exists a constant $m\in\mathbb{N}$ such that
\begin{equation}
f_{0}^{m}\circ\sigma_{\alpha}\left(  z\right)  \in S^{+},
\label{eq:strip-return-cond}%
\end{equation}

\item  there exists a point $x\in W_{z}%
^{u}\left(  f_{0},U\right)  \cap W_{\sigma_{\alpha}\left(  z\right)  }%
^{s}\left(  f_{0}\right)  $ such that $f_{0}^{m}\left(  x\right)  \in
W_{f_{0}^{m}\left(  \sigma_{\alpha}\left(  z\right)  \right)  }^{s}(f_{0},U)$
and%
\begin{equation}
\sum_{j=0}^{m-1}\pi_{I}g\left(  0,f_{0}^{j}\left(  x\right)  \right)
-\frac{1+\lambda}{1-\lambda}L_{g}C>0. \label{eq:key-assumption-again}%
\end{equation}
(The choice of $m$ and $\alpha$ can depend on $z$.)
\end{enumerate}
Then for sufficiently small $\varepsilon>0$ there exists an $x_{\varepsilon}$
and $n_{\varepsilon}>0$ such that%
\begin{equation}
\pi_{I}\left(  f_{\varepsilon}^{n_{\varepsilon}}\left(  x_{\varepsilon
}\right)  -x_{\varepsilon}\right)  >1. \label{eq:drift-up-th1}
\end{equation}

\end{theorem}

The following theorem can be used to establish orbits of the perturbed system,
whose $I$ coordinate decreases.

\begin{theorem}
\cite{Jorge}\label{th:strip-down}Assume that conditions
(\ref{eq:Lip-g-assumption}) and (\ref{eq:contraction-expansion-bounds}) are
satisfied, and that for $\varepsilon=0$ we have the sequence of scattering
maps $\sigma_{\alpha}:\mathrm{dom}\left(  \sigma_{\alpha}\right)
\rightarrow\Lambda_{0}$ for $\alpha=1,\ldots,L$. Let $S^{-}\subset\Lambda_{0}$
be a strip. Assume that for every $z\in\overline{S^{-}}$

\begin{enumerate}
\item there exists an $\alpha\in\left\{  1,\ldots,L\right\}  $ for which
$z\in\mathrm{dom}\left(  \sigma_{\alpha}\right)  $, and there exists a constant $m\in\mathbb{N}$ such that 
\[
f_{0}^{m}\circ\sigma_{\alpha}\left(  z\right)  \in S^{-},
\]

\item  there exists a point $x\in W_{z}%
^{u}\left(  f_{0},U\right)  \cap W_{\sigma_{\alpha}\left(  z\right)  }%
^{s}\left(  f_{0}\right)  $ such that $f_{0}^{m}\left(  x\right)  \in
W_{f_{0}^{m}\left(  \sigma_{\alpha}\left(  z\right)  \right)  }^{s}(f_{0},U)$
and%
\[
\sum_{j=0}^{m-1}\pi_{I}g\left(  0,f_{0}^{j}\left(  x\right)  \right)
+\frac{1+\lambda}{1-\lambda}L_{g}C<0.
\]
(The choice of $m$ and $\alpha$ can depend on $z$.)
\end{enumerate}
Then for sufficiently small $\varepsilon>0$ there exists an $x_{\varepsilon}$
and $n_{\varepsilon}>0$ such that%
\begin{equation}
\pi_{I}\left(  x_{\varepsilon}-f_{\varepsilon}^{n_{\varepsilon}}\left(
x_{\varepsilon}\right)  \right)  >1. \label{eq:strip-down-th2}
\end{equation}

\end{theorem}

By combining the two strips $S^{+}$ and $S^{-}$ we obtain shadowing of any
prescribed finite sequence of actions.

\begin{theorem}
\cite{Jorge}\label{th:shadowing-seq} Assume that two strips $S^{+}$ and
$S^{-}$ satisfy assumptions of Theorems \ref{th:strip-up} and
\ref{th:strip-down}, respectively. If in addition

\begin{enumerate}
\item for every $z\in\overline{S^{+}}$ there exists an $n$ (which can depend
on $z$) such that $f_{0}^{n}(z)\in S^{-}$, and

\item for every $z\in\overline{S^{-}}$ there exists an $n$ (which can depend
on $z$) such that $f_{0}^{n}(z)\in S^{+}$,
\end{enumerate}
then there exists an $\mathcal{M}$ such that for any given finite sequence $\{I_{k}\}_{k=0}^{N}$ and any given
$\delta>0$, for sufficiently small $\varepsilon$ there exists an orbit of
$f_{\varepsilon}$ which $\varepsilon\mathcal{M}$-shadows the actions $I_{k}$; i.e. there
exists a point $z_{0}^{\varepsilon}$ and a sequence of integers $n_{1}%
^{\varepsilon}\leq n_{2}^{\varepsilon}\leq\ldots\leq n_{N}^{\varepsilon}$ such
that
\begin{equation}
\left\Vert \pi_{I}f_{\varepsilon}^{n_{k}^{\varepsilon}}\left(z_{0}^{\varepsilon
}\right)-I_{k}\right\Vert <\varepsilon\mathcal{M}.\label{eq:shadowing-seq-th3}
\end{equation}
\end{theorem}



\subsection{Diffusion for time periodic perturbations of Hamiltonian systems\label{sec:diffusion-Hamiltonian}}

Consider the following family of Hamiltonian systems $H_{\varepsilon
}:\mathbb{R}^{4}\times\mathbb{T}\rightarrow\mathbb{R}$, that depends smoothly
on the parameter $\varepsilon\in\mathbb{R}$, which generates the following ODE
in the extended phase space%
\begin{align}
x^{\prime}  &  =J\nabla_x H_{\varepsilon}\left(  x,t\right)
,\label{eq:Hamiltonian-form-of-problem}\\
t^{\prime}  &  =1,\nonumber
\end{align}
where
\[
J=\left(
\begin{array}
[c]{ll}%
0 & Id\\
-Id & 0
\end{array}
\right)  ,\qquad\text{for}\qquad Id=\left(
\begin{array}
[c]{ll}%
1 & 0\\
0 & 1
\end{array}
\right)  .
\]
Let $\Phi_{t}^{\varepsilon}$ be the flow of
(\ref{eq:Hamiltonian-form-of-problem}). We refer to $H_{\varepsilon=0}$ as the
unperturbed system. We make the following important assumption 
\[
H_{\varepsilon=0}\left(  x,t\right)  =H\left(  x\right)  ,
\]
where $H:\mathbb{R}^{4}\rightarrow\mathbb{R}$; in other words, we assume that
the unperturbed system is autonomous and hence $H$ is a constant of motion. 

Consider a \correction{comment 20}{ local} Poincar\'{e} section $\Sigma$ in $\mathbb{R}^{4}$ for  the system $x^{\prime}=J\nabla H\left(  x\right)  $, and
consider a section $\tilde{\Sigma}=\Sigma\times\mathbb{T}$ for the perturbed
system (\ref{eq:Hamiltonian-form-of-problem}) in the extended phase space. Let
us consider a Pioncar\'{e} map%
\begin{equation}
f_{\varepsilon}:\tilde{\Sigma}\rightarrow\tilde{\Sigma}.
\label{eq:maps-for-diff-mech}%
\end{equation}

\correction{comment 21}{Due to the Hamiltonian nature of the system, the map $f_{\varepsilon}$ is
symplectic for a suitable symplectic form $\omega_{\varepsilon}$ on $\tilde \Sigma$.}
\cor{
\begin{remark}
We believe that the fact that a Poincar\'e map of a Hamiltonian system in the extended phase space is symplectic, for a symplectic form induced from the standard one, is a well known result. We have been unable to find a source for this though, so we include a short section in \ref{sec:poinc-symplectic} with the proof and an explicit formula (\ref{eq:symplectic-form-in-extended-space-app}) for the choice of the symplectic form. 
\end{remark}}

The coordinates on $\tilde{\Sigma}$ can be identified with $\mathbb{R}%
^{3}\times\mathbb{T}$. We will write $\theta\in\mathbb{T}$ for the extended
phase space (time) coordinate. We assume that one of the remaining coordinates
on $\tilde{\Sigma}$ is the energy, expressed by the Hamiltonian $H$. We will
write $I$ for this coordinate. Since for $\varepsilon=0$ the energy is
preserved we have
\begin{equation*}
\pi_{I}f_{0}\left(  z\right)  =\pi_{I}z\qquad\text{for all }z\in\tilde{\Sigma
}.
\end{equation*}

We assume that for $\varepsilon=0$ the map $f_{\varepsilon=0}$ has a normally
hyperbolic invariant manifold $\Lambda_{0}\subset\tilde{\Sigma}$.
We assume that $\Lambda_{0}$ is parameterised by $\theta,I$. For the remaining
two coordinates on $\tilde{\Sigma}$ we will write $u$ and $s$. We assume that 
\[
\Lambda_{0}=\left\{  \left(  u,s,I,\theta\right)  :u=s=0, \cor{I\in \mathbb{R}, \theta \in \mathbb{T}} \right\}  ,
\]
and that $\omega_{\varepsilon=0}|_{\Lambda_{0}}$ is non-degenerate.

\correction{comment 22}{}
\cor{
The manifold $\Lambda_{0}$ is a cylinder, but we are intersected in proving that after perturbation we have changes in energy $I=H$ for $I\in [0,1]$. We will perform the following construction to embed $\Lambda_{0}\cap \{I\in [0,1]\}$ in a torus; that will allow us to assume that $\Lambda_0$ is a torus:

The idea, which we describe in more detail below, is that for $I\in\left[ 0,1\right] $ we leave the system as it is. We then employ
a `bump' function so that at the edges of the domain $I\in\left[ -1,2\right] 
$, i.e. for $I=2$ and $I=-1$, we have $\tilde{f}_{\varepsilon}=f_{0}$%
. For the remaining $I\notin (-1,2)$ we `freeze' the system taking $\tilde{f%
}_{\varepsilon}=f_{0}$. We can therefore `glue' the system by considering $I\in \mathbb{R}/2\pi$, which turns $\Lambda_0$ from a cylinder to a torus.

In more detail, we consider a smooth `bump' function\footnote{%
For instance $b(x)=\exp (-\left( 1-x^{2}\right) ^{-1})$ for $x\in \left[ -1,0%
\right] $, $b(x)=1$ for $x\in \left[ 0,1\right] $, $b(x)=\exp (-\left(
1-(1-x)^{2}\right) ^{-1})$ for $x\in \left[ 1,2\right] $ and zero otherwise.}
$b:\mathbb{R}\rightarrow \left[ 0,1\right] $ for which%
\begin{align*}
b\left( I\right) & =0\qquad \text{for }I\in \mathbb{R}\setminus \left(
-1,2\right) , \\
b\left( I\right) & =1\qquad \text{for }I\in \left[ 0,1\right] ,
\end{align*}%
and take $\tilde{f}_{\varepsilon }$ to be the Poincar\'e maps for a modified ODE 
\begin{equation*}
x^{\prime }=J\nabla _{x}\left( H\left( x\right) +b(H(x))\varepsilon
G(x,t)\right) .
\end{equation*}%
We then have the following lemma:
\begin{lemma}
\label{lem:kam-modified}\cite[Lemma 35]{Jorge} If for $\tilde{f}_{\varepsilon }|_{\Lambda
_{\varepsilon }}$we have a Cantor set of KAM tori on $\Lambda _{\varepsilon }
$, and assumptions of Theorem \ref{th:strip-up} (Theorem \ref{th:strip-down}, Theorem \ref{th:shadowing-seq}) are satisfied
for $\tilde{f}_{\varepsilon }$, then there exists an $x_{\varepsilon }$ for
which the condition (\ref{eq:drift-up-th1}) ((\ref{eq:strip-down-th2}), (\ref{eq:shadowing-seq-th3})) holds for $f_{\varepsilon }$.
\end{lemma}
Working with $\tilde f_{\varepsilon}$ allows us to treat $\Lambda_0$ as a compact manifold without boundary. (In fact  $\Lambda_0$ is a two dimensional trous.) Thus we can use the standard version of the normally hyperbolic invariant manifold theorem \cite{MR292101} and make use of Remark \ref{rem:symplectic} for the shadowing by means of Theorem \ref{th:shadowing}.
}

\subsection{The Planar Elliptic Restricted Three Body Problem}

The Planar Elliptic Restricted Three Body Problem (PER3BP) describes the
motion of a massless particle (e.g., an asteroid or a spaceship, which mass is in fact negligible in comparison to the masses of other celestial objects), under the
gravitational pull of two large bodies, which we call primaries. The primaries
rotate in a plane along Keplerian elliptical orbits with eccentricity
$\varepsilon$, while the massless particle moves in the same plane and has no
influence on the orbits of the primaries. We use normalized units, in which
the masses of the primaries are $\mu$ and $1-\mu$. We consider a frame of
`pulsating' coordinates that rotates together with the primaries, making their
position fixed on the horizontal axis \cite{S}. The motion of the massless
particle is described via the Hamiltonian $H_{\varepsilon}:\mathbb{R}%
^{4}\times\mathbb{T\rightarrow R}$
\begin{equation}%
\begin{split}
H_{\varepsilon}\left(  X,Y,P_{X},P_{Y},\theta\right)   &  =\frac{\left(
P_{X}+Y\right)  ^{2}+\left(  P_{Y}-X\right)  ^{2}}{2}-\frac{\Omega\left(
X,Y\right)  }{1+\varepsilon\cos(\theta)},\\
\Omega\left(  X,Y\right)   &  =\frac{1}{2}\left(  X^{2}+Y^{2}\right)
+\frac{\left(  1-\mu\right)  }{r_{1}}+\frac{\mu}{r_{2}},\\
r_{1}^{2}  &  =\left(  X-\mu\right)  ^{2}+Y^{2},\\
r_{2}^{2}  &  =\left(  X-\mu+1\right)  ^{2}+Y^{2}.
\end{split}
\label{eq:H-pre3bp}%
\end{equation}

The corresponding Hamilton equations are:
\begin{equation}%
\begin{array}
[c]{lll}%
\displaystyle\frac{dX}{d\theta}=\frac{\partial H_{\varepsilon}}{\partial
P_{X}},\smallskip & \qquad\qquad & \displaystyle\frac{dP_{X}}{d\theta}%
=-\frac{\partial H_{\varepsilon}}{\partial X},\\
\displaystyle\frac{dY}{d\theta}=\frac{\partial H_{\varepsilon}}{\partial
P_{Y}}, & \qquad\qquad & \displaystyle\frac{dP_{Y}}{d\theta}=-\frac{\partial
H_{\varepsilon}}{\partial Y},
\end{array}
\label{eq:PER3BP-ode}%
\end{equation}
where $X,Y\in\mathbb{R}$ are the position coordinates of the massless
particle, and $P_{X},P_{Y}\in\mathbb{R}$ are the associated linear momenta. \correction{comment 5}{We use the convention, in which in the $X,Y$ coordinates the Jupiter lies to the left of the origin at $\left(  \mu-1,0\right)  $, and the Sun is to the right of the origin at $\left(  \mu,0\right)  $.}
The variable $\theta\in\mathbb{T}$ is the true anomaly of the Keplerian orbits
of the primaries, where $\mathbb{T}$ denotes the $1$-dimensional torus. The
system is non-autonomous, thus we consider it in the extended phase space, of
dimension $5$, with $\theta$ as an independent variable. We use the
notation $\Phi_{t}^{\varepsilon}$ to denote the flow of (\ref{eq:PER3BP-ode})
in the extended phase space, which includes $\theta\in\mathbb{T}$, i.e.
\[
\Phi_{t}^{\varepsilon}:\mathbb{R}^{4}\times\mathbb{T}^{1}\rightarrow
\mathbb{R}^{4}\times\mathbb{T}^{1}.
\]

When $\varepsilon=0$ the primaries rotate around the center of mass along
circular orbits. The PER3BP becomes the Planar Circular Restricted Three Body
Problem (PCR3BP). We shall use the notation $\Phi_{t}$ for the flow of the
PCR3BP. Since the ODE of the PCR3BP is autonomous, this flow is not in the
extended phase space, i.e. for a given $t\in\mathbb{R}$,%
\[
\Phi_{t}:\mathbb{R}^{4}\rightarrow\mathbb{R}^{4}.
\]
We naturally have the relation $\Phi_{t}\left(  x\right)  =\pi_{x}\Phi_{t}%
^{0}\left(  x,\theta\right)  $ (for $\varepsilon=0$ the right hand side does
not depend on the choice of $\theta$).

In the PCR3BP the system has the following time reversing symmetry
\begin{equation}
\mathcal{R}\circ\Phi_{t}=\Phi_{-t}\circ\mathcal{R}, \label{eq:symmetry-3bp}%
\end{equation}
where%
\[
\mathcal{R}\left(  X,Y,P_{X},P_{Y}\right)  =\left(  X,-Y,-P_{X},P_{Y}\right)
.
\]
We say that an orbit $x\left(  t\right)  $ is $\mathcal{R}$-symmetric if
\[
x(-t)=\mathcal{R}\left(  x(t)\right)  .
\]
Note that for such orbits we need to have $\pi_{Y}x\left(  0\right)
=\pi_{P_{X}}x\left(  0\right)  =0$.

The Jacobi integral $-2\cdot H_{0}$ is a preserved coordinate for the
unperturbed system with $\varepsilon=0.$ We shall refer to $H_{0}$ as the
energy. For a point $x\in\mathbb{R}^{4}$ we shall write%
\[
I(x):=H_{0}\left(  x\right)  .
\]


\section{Statement of the main result\label{sec:main}}

The PCR3BP has five libration fixed points. Three of these are on $\{Y=0\}$;
we refer to them as collinear and denote as $L_{1},L_{2},L_{3}$. One of the collinear libration points, which we denote as $L_{1}$ lies between
the Sun and Jupiter. Around this fixed point we have a family of Lyapunov
periodic orbits. Each orbit has different energy. The Lyapunov orbits are
$\mathcal{R}$-symmetric and we can choose points belonging to the orbits of
the form $\left(  X,0,0,P_{X}\right)  $, with suitable choices of $X$ and
$P_{X}$. The $P_{X}$ depends on the choice of $X$, so we shall write
$P_{X}=P_{X}\left(  X\right)  $. 

The Lyapunov orbits considered by us will be parameterised by $X$. From now
on, we will write $\mathcal{X}\in\mathbb{R}$ to denote the value
$X=\mathcal{X}$, which determines the choice of a point on the Lyapunov orbit
$\left(  \mathcal{X},0,0,P_{X}\left(  \mathcal{X}\right)  \right)  $. This
will allow us to make the distinction between $\mathcal{X}$, which are values
parameterising the orbits and the coordinate $X$. We will be interested in the
family of Lyapunov orbits with%
\begin{equation}
\mathcal{X}\in\left[  -0.95,-0.95+10^{-9}\right]  .\label{eq:X-interval}%
\end{equation}

The energy of such orbits, measured by the Jacobi integral, is approximately $3.03$, which is the energy of the comet Oterma \cite{MR1765636} that was observed in the Jupiter-Sun system. We choose to work with this particular energy, since it has a physical meaning of a known celestial body, but we could easily work with a different $\mathcal{X}$-interval than (\ref{eq:X-interval}), since there is nothing special about the energy of Oterma. In fact, choosing a different energy level can make some technical aspects of the computer assisted proof easier. (We elaborate on this in Remark \ref{rem:Oterma-m}.) We fix an energy level of some physical object to show that the method can be applied in a concrete setting.

For each $\mathcal{X}$ from (\ref{eq:X-interval}) the corresponding Lyapunov
orbit has a different energy $I(\mathcal{X})$. We will show that for
sufficiently small $\varepsilon>0$, that is for the PER3BP, we can visit the
arbitrary $I(\mathcal{X})$ for $\mathcal{X}$ from (\ref{eq:X-interval}). We
shall refer to these as diffusing orbits. Formally, the main result of this
paper is as follows.

\begin{theorem}
\label{th:main}(Main theorem) Let $I(\mathcal{X})$ denote the energy of a
Lyapunov orbit starting from a given Lyapunov orbit, i.e.
\[
I\left(  \mathcal{X}\right)   =H_0\left(  \mathcal{X},0,0,P_{X}(\mathcal{X}%
)\right)  .
\]
There exists a constant $\mathcal{M}>0$ such that for an arbitrary (finite) sequence $\mathcal{X}_{1},\ldots,\mathcal{X}%
_{N}$ from the interval (\ref{eq:X-interval}) and for sufficiently small $\varepsilon>0$, there exists a
sequence  $t_{1}^{\varepsilon},\ldots,t_{N}^{\varepsilon}$ and a point
$x^{\varepsilon}$ \correction{comment 16}{ }such that%
\[
\left\Vert H_0\left(  \Phi_{t_{i}^{\varepsilon}}^{\varepsilon}(x^{\varepsilon
})\right)  -I\left(  \mathcal{X}_{i}\right)  \right\Vert <\varepsilon\mathcal{M}\qquad
\qquad\text{for }i=1,\ldots,N.
\]

\end{theorem}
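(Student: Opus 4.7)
The plan is to realise the PER3BP as an instance of the abstract setup of Section~\ref{sec:diff-mech} and then invoke Theorem~\ref{th:shadowing-seq}. First I would choose a Poincar\'e section $\Sigma$ for the unperturbed PCR3BP transverse to the family of Lyapunov orbits around $L_{1}$ (the natural choice being $\{Y=0\}$ restricted to an appropriate branch), set $\tilde\Sigma=\Sigma\times\mathbb{T}$, and take $f_{\varepsilon}$ to be the associated Poincar\'e return map in the extended phase space of the PER3BP. The family of Lyapunov orbits parameterised by $\mathcal{X}\in[-0.95,-0.95+10^{-9}]$ intersected with $\tilde\Sigma$ gives a two-dimensional cylinder $\Lambda_{0}$, on which we use $(I,\theta)$ as coordinates (with $I$ the value of the autonomous Hamiltonian $H$) and introduce a transverse $(u,s)$ splitting by diagonalising the variational flow along $\Lambda_{0}$, placing us squarely in the framework of Section~\ref{sec:diff-mech}.

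The existence of this cylinder and tight interval enclosures of its points are obtained by parallel shooting combined with the Krawczyk Theorem~\ref{th:Krawczyk}, exploiting the $\mathcal{R}$-symmetry (\ref{eq:symmetry-3bp}): one reduces the periodic-orbit equation to a boundary condition between two symmetry sections and solves it rigorously. Cone conditions then give rigorous enclosures of the local stable and unstable manifolds $W_{\Lambda_{0}}^{s,u}(f_{0},U)$ and yield the contraction/expansion rates $C,\lambda$ of (\ref{eq:contraction-expansion-bounds}); the same cones, propagated under iterates of $f_{0}$, establish transverse homoclinic intersections, each producing a homoclinic channel $\Gamma_{\alpha}$ and hence a scattering map $\sigma_{\alpha}$ via Definition~\ref{def:scattering-map}. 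Differentiating the PER3BP Hamilton equations in $\varepsilon$ at $\varepsilon=0$ produces an expression for $g(0,\cdot)$ as an integral of $\partial_{\varepsilon}(J\nabla H_{\varepsilon})|_{\varepsilon=0}$ along unperturbed trajectories, which I would bound with CAPD to obtain the Lipschitz constant $L_{g}$ in (\ref{eq:Lip-g-assumption}).

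Next I would construct two strips $S^{+},S^{-}\subset\Lambda_{0}$, each covering the full $I$-range corresponding to (\ref{eq:X-interval}) and a complete $\theta$-loop, and verify hypotheses 1--2 of Theorems~\ref{th:strip-up} and \ref{th:strip-down} on them. Condition~1 (return into the same strip under $f_{0}^{m}\circ\sigma_{\alpha}$) is a geometric bookkeeping check based on the computed scattering maps. Condition~2 is the delicate quantitative step: along a chosen homoclinic orbit $x$ one must evaluate the finite Melnikov-type sum $\sum_{j=0}^{m-1}\pi_{I}g(0,f_{0}^{j}(x))$ and show that it exceeds (resp.\ is strictly below the negative of) $\frac{1+\lambda}{1-\lambda}L_{g}C$ uniformly for every $z\in\overline{S^{\pm}}$. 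The twist condition that produces a Cantor set of KAM tori in $\Lambda_{\varepsilon}$ is verified by rigorously computing the derivative with respect to $I$ of the rotation number of $f_{0}|_{\Lambda_{0}}$. The interplay conditions of Theorem~\ref{th:shadowing-seq}, namely that every point of $\overline{S^{+}}$ eventually lands in $S^{-}$ under some iterate of $f_{0}$ and vice versa, follow from the fact that $f_{0}|_{\Lambda_{0}}$ rotates $\theta$ at essentially constant speed, so any strip that covers a full $\theta$-loop is visited infinitely often.

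Theorem~\ref{th:shadowing-seq} then yields the $\varepsilon\mathcal{M}$-shadowing of the prescribed sequence $\{I(\mathcal{X}_{i})\}_{i=1}^{N}$ by iterates of $f_{\varepsilon}$, and since $f_{\varepsilon}$ is a Poincar\'e return map of $\Phi_{t}^{\varepsilon}$ with uniformly bounded return time on the relevant compact region, the iterate indices $n_{k}^{\varepsilon}$ translate into real times $t_{k}^{\varepsilon}$, giving the conclusion of Theorem~\ref{th:main}. The principal obstacle is the rigorous interval-arithmetic verification of (\ref{eq:key-assumption-again}) uniformly over the two strips: since $S^{+}\cup S^{-}$ is two-dimensional, one must cover it by a sufficiently fine grid in $(\theta,I)$, propagate enclosures of homoclinic points through many iterations of $f_{0}$, and control accumulated wrapping of the derivative cones so that the computed lower (resp.\ upper) bound on the Melnikov sum dominates the constant $\frac{1+\lambda}{1-\lambda}L_{g}C$ on every box of the grid simultaneously.
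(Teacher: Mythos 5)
Your overall architecture coincides with the paper's: Poincar\'e return map on $\{Y=0\}\times\mathbb{T}$, Krawczyk plus parallel shooting for the Lyapunov family, cone enclosures for $W^{u,s}_{\Lambda_0}$ and the constants $C,\lambda$, a homoclinic channel and scattering map, a computer-assisted $L_g$, the twist condition for the Cantor set of KAM tori, and finally Theorem \ref{th:shadowing-seq}. However, there is one genuine gap in the way you set up the strips. You propose to take $S^{+}$ and $S^{-}$ covering ``a complete $\theta$-loop''. This is incompatible with hypothesis 2 of Theorems \ref{th:strip-up} and \ref{th:strip-down}: the finite Melnikov sum $\sum_{j=0}^{m-1}\pi_I g(0,f_0^j(x))$ is a function of the angle $\pi_\theta z$ at which the homoclinic excursion is launched, and for a Hamiltonian perturbation periodic in $\theta$ it oscillates and changes sign as $\theta$ runs over $\mathbb{T}$ (this is exactly what Figure \ref{fig:energyChange} shows for the dominant five terms). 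Hence no single homoclinic channel can make the sum exceed $\frac{1+\lambda}{1-\lambda}L_gC$ uniformly on a strip containing all values of $\theta$, and your condition~2 check would fail on part of the strip no matter how finely you subdivide. The definition of a strip only requires it to meet every level $\{\pi_I z=\iota\}$; it must be chosen \emph{narrow} in $\theta$, localized where the Melnikov sum has the desired sign (in the paper, $\theta\in[0.65-0.125,0.65+0.125]$ for $S^{-}$ and its shift by $\pi$ for $S^{+}$).

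Once the strips are narrow in $\theta$, two consequences follow that your proposal glosses over because it assumed full $\theta$-coverage. First, condition~1 of Theorems \ref{th:strip-up}--\ref{th:strip-down} (returning to the \emph{same} strip under $f_0^{m}\circ\sigma_\alpha$) becomes nontrivial: one must combine the explicit bound on the phase shift of the scattering map (the analogue of (\ref{eq:sigma-bound-cap})) with the inner rotation $\theta\mapsto\theta+T(\mathcal{X})$ and choose $m=m(z)$, varying over subdivisions of the strip, so that the image lands back in the $\theta$-window. Second, the interplay conditions of Theorem \ref{th:shadowing-seq} no longer hold trivially; they hold because the per-iterate rotation $2T(\mathcal{X})\bmod 2\pi$ is smaller than the $\theta$-width of the strips, so iterates of $f_0$ cannot step over them. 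With the strips repositioned and these two checks added, the rest of your argument matches the paper's proof.
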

We now make a few remarks concerning the result.

\correction{comment 3}{}\correction{comment 13}{}\cor{We obtain our diffusion result for sufficiently small $\varepsilon$. We believe that in some settings it is possible to extend the method from section \ref{sec:diff-mech} to consider explicit sizes of perturbations. The first difficulty in such extension would be that that these methods rely on the persistence of normally hyperbolic manifolds. One would therefore need to establish the existence of such manifolds for explicit perturbations. This is doable 
\cite{MR3467671,MR3309008,MR3397322}, but not straightforward. Additionally, in this paper we deal with a normally hyperbolic cylinder with boundary and hence we rely on the existence of KAM tori after perturbation. KAM theorem ensures that the inner dynamics on the perturbed manifold is contained between the invariant tori, which allows us to shadow arbitrary sequences of energy changes $\mathcal{X}_{1},\ldots,\mathcal{X}_{N}$ by using the outer dynamics. It is possible to obtain KAM results for explicit perturbations 
\cite{CALLEJA2022106099}, but this is far from straightforward. Without KAM the best that we could prove is the existence of orbits which have an explicit energy change. Such diffusion would follow from a dichotomy that there can be either the diffusion without leaving the manifold or the diffusion through the outer dynamics excursions.
}

The interval (\ref{eq:X-interval}) is very narrow. The actual physical
distance between the two points on the Lyapunov orbits on the section $\{Y=0\}$,
which correspond to endpoints of (\ref{eq:X-interval}) is about $1$\thinspace km.
This means that the diffusion established by us is over a very narrow range,
but it is not completely negligible.

The proof was performed with computer-assisted tools, and for the interval
(\ref{eq:X-interval}) it took 17 minutes, running on a single thread on a
standard laptop. Our computer assisted proof could be performed on other
$\mathcal{X}$-intervals, which combined together would lead to diffusion over
longer distances. Such proof could be performed by parallel computations on a
cluster. We are more interested in the proof of concept rather than in
obtaining long intervals by brute force, so we have not performed such validation. 


\section{Proof of the main result\label{sec:proof}}

The proof is performed in the following steps:

\begin{enumerate}
\item \label{pt:Lyap}establish the existence of the family of Lyapunov orbits,
which will form the NHIM $\Lambda_{0}$,

\item \label{pt:local-unstable}establish bounds for the local stable/unstable
manifolds of $\Lambda_{0}$,

\item \label{pt:inter}prove that the stable and unstable manifolds of
$\Lambda_{0}$ intersect transversally, and that we have a homoclinic channel
along the intersection,

\item \label{pt:persistence}prove that $\Lambda_{0}$ is perturbed to $\Lambda_{\varepsilon}$, which contains a Cantor set of KAM tori. 

\item \label{pt:main-proof}apply Theorem \ref{th:shadowing-seq} to obtain the
existence of diffusing orbits.
\end{enumerate}

Section \ref{sec:shooting} sets out a parallel shooting method, which is then
used for the steps \ref{pt:Lyap} and \ref{pt:inter} in sections \ref{sec:Lyap}
and \ref{sec:inter}, respectively. Sections \ref{sec:local-unstable},
\ref{sec:persistence} and \ref{sec:main-proof} deal with steps
\ref{pt:local-unstable}, \ref{pt:persistence} and \ref{pt:main-proof}, respectively.


\subsection{Parallel shooting for symmetric orbits\label{sec:shooting}}

Here we consider the PCR3BP and work with the flow $\Phi_{t}$ in
$\mathbb{R}^{4}$. Let us consider a $C^{1}$ function $p:\mathbb{R}%
\rightarrow\mathbb{R}^{4}$. For now we leave $p$ unspecified. Later on,
depending on the choice of $p$ we will use the below method to establish
either bounds on the points along a Lyapunov orbit, or for a homoclinic orbit
to a Lyapunov orbit.

Let us define the following function%
\[
F:\mathbb{R}\times\mathbb{R}\times\underset{n}{\underbrace{\mathbb{R}%
^{4}\times\ldots\times\mathbb{R}^{4}}}\rightarrow\underset{n}{\underbrace
{\mathbb{R}^{4}\times\ldots\times\mathbb{R}^{4}}}\times\mathbb{R}%
\times\mathbb{R},
\]
as%
\begin{align}
&  F\left(  s,\tau,x_{1},\ldots,x_{n}\right)  :=\label{eq:F-for-shooting}\\
&  \left(  \Phi_{\tau}\left(  p(s)\right)  -x_{1},\Phi_{\tau}\left(
x_{1}\right)  -x_{2},\ldots,\Phi_{\tau}\left(  x_{n-1}\right)  -x_{n},\pi
_{Y}\Phi_{\tau}\left(  x_{n}\right)  ,\pi_{P_{X}}\Phi_{\tau}\left(
x_{n}\right)  \right)  .\nonumber
\end{align}
We see that if we find a point $\mathbf{x}^{\ast}=\left(  s,\tau,x_{1}%
,\ldots,x_{n}\right)  $ such that
\begin{equation}
F\left(  \mathbf{x}^{\ast}\right)  =0, \label{eq:F-zeor-R-sym}%
\end{equation}
then by taking $x_{0}=p(s)$ and $x_{n+1}=\Phi_{\tau}\left(  x_{n}\right)  $ we
obtain%
\[
\Phi_{\tau}\left(  x_{i}\right)  =x_{i+1}\qquad\text{for }i=0,\ldots,n.
\]
Moreover, since
\[
\pi_{Y}x_{n+1}=\pi_{Y}\Phi_{\tau}\left(  x_{n}\right)  =0,\qquad\qquad
\pi_{P_{X}}x_{n+1}=\pi_{P_{X}}\Phi_{\tau}\left(  x_{n}\right)  =0,
\]
we see that $x_{n+1}$ is $\mathcal{R}$-symmetric, i.e. $x_{n+1}=\mathcal{R}%
\left(  x_{n+1}\right)  .$ If we now define $x_{n+k}=\mathcal{R}\left(
x_{n+2-k}\right)  $, for $k=2,\ldots,n+2$ then by (\ref{eq:symmetry-3bp}) we
will obtain an $\mathcal{R}$-symmetric orbit $x(t)$, which starts from
$x\left(  0\right)  =x_{0}$, and for which%
\[
\Phi_{\tau}\left(  x_{i}\right)  =x_{i+1}\qquad\text{for }i=0,\ldots,2n+1.
\]

Solving of (\ref{eq:F-zeor-R-sym}) can be done by means of the Krawczyk
method. The first step is to obtain an approximate solution of
(\ref{eq:F-zeor-R-sym}) by iterating (using non-rigorous numerics)%
\begin{equation}
\mathbf{x}_{i+1}=\mathbf{x}_{i}-\left(  DF\left(  \mathbf{x}_{i}\right)
\right)  ^{-1}F\left(  \mathbf{x}_{i}\right)  .\label{eq:Newton-initial}%
\end{equation}
After a few iterates of (\ref{eq:Newton-initial}) we obtain a point
$\mathbf{x}$ around which we can construct a cube $\mathbf{X}$ and validate
that we have the solution of (\ref{eq:F-zeor-R-sym}) inside of $\mathbf{X}$ by
using Theorem \ref{th:Krawczyk}. (We take $C$ as the non-rigorously computed inverse of the derivative $C=\left(DF\left(  \mathbf{x}\right)\right)^{-1}$.)



\subsection{Bounds for Lyapunov orbits\label{sec:Lyap}}

When we fix some $\mathcal{X}\in\mathbb{R}$ and choose the function
$p:\mathbb{R}\rightarrow\mathbb{R}^{4}$ as
\begin{equation}
p\left(  s\right)  =\left(  \mathcal{X},0,0,s\right)  ,\label{eq:ps-for-Lyap}%
\end{equation}
then the methodology form section \ref{sec:shooting} can be used to obtain a
sequence of points $x_{0},\ldots,x_{2n+2}$ along an $\mathcal{R}$-symmetric
periodic orbit. This is because by the definition of $F$%
\[
\Phi_{(n+1)\tau}\left(  x_{0}\right)  =x_{n+1},
\]
and since $x_{0}$ and $x_{n+1}$ are self $\mathcal{R}$-symmetric, by
(\ref{eq:symmetry-3bp})
\[
\Phi_{(n+1)\tau}\left(  x_{n+1}\right)  =\Phi_{(n+1)\tau}\circ\mathcal{R}\left(
x_{n+1}\right)  =\mathcal{R}\circ\Phi_{-(n+1)\tau}\left(  x_{n+1}\right)
=\mathcal{R}\left(  x_{0}\right)  =x_{0}.
\]
We thus see that $x_{0}$ is a point on an $\mathcal{R}$-symmetric periodic
orbit with period $T=2\left(  n+1\right)  \tau$.

An advantage of the method is that we can obtain a bound for a whole family of
Lyapunov orbits. This can be done by considering an interval instead of a
single $\mathcal{X}$, for our validation. The computer assisted computation
allows us to obtain a bound on $F(x)$ and $DF\left(  x\right)  $ for
(\ref{eq:F-for-shooting}). This way we obtain a bound for points which lie on
Lyapunov orbits for an interval of values $\mathcal{X}$. In our case we take
the interval (\ref{eq:X-interval}) and use this method to validate the
following result:%

\begin{table}
\begin{center}
{\scriptsize 
\begin{tabular}{l*{4}{l}}
\normalsize{$n$} & \normalsize{$X$} & \normalsize{$Y$} & \normalsize{$P_X$} & \normalsize{$P_Y$} \\
\hline \\
0 & -0.9499999995 & 0 & 0 & -0.84134724633 \\
1 & -0.95011002908 & 0.010872319337 & -0.012750492306 & -0.84566628682 \\
2 & -0.95027977734 & 0.020942127841 & -0.021798848297 & -0.85701977629 \\
3 & -0.95016249921 & 0.02964511208 & -0.02595601236 & -0.87222441368 \\
4 & -0.94945269037 & 0.036681290981 & -0.026117965229 & -0.88881047965 \\
5 & -0.94799596417 & 0.041912835694 & -0.023738329652 & -0.90554260324 \\
6 & -0.94578584443 & 0.045275924414 & -0.020027448005 & -0.92194698888 \\
7 & -0.94292354313 & 0.046741951127 & -0.015815875271 & -0.93784624844 \\
8 & -0.93958019632 & 0.046310536541 & -0.011642040196 & -0.9531124603 \\
9 & -0.93596962345 & 0.044016227704 & -0.0078569213076 & -0.96756323289 \\
10 & -0.93232909834 & 0.039939546482 & -0.0046935571959 & -0.98092506113 \\
11 & -0.92890403848 & 0.034218254755 & -0.0022984843476 & -0.99282861416 \\
12 & -0.92593321085 & 0.027056429274 & -0.00073280826007 & -1.0028268728 \\
13 & -0.92363228182 & 0.018728871729 & 4.5125264188e-05 & -1.0104387689 \\
14 & -0.92217533629 & 0.0095779013399 & 0.00019712976878 & -1.0152203731 \\
15 & -0.92167641746 & 0 & 0 & -1.0168530766 \\
\end{tabular}
}\end{center}
\caption{Midpoints of our enclosure of points along the family of Lyapunov orbits, computed using (\ref{eq:F-zeor-R-sym}) with $n=14$.\label{tabl:Lyap}}
\end{table}

\begin{lemma}
\label{lem:Lyap-enclosure}Let $r:=3.633\cdot10^{-9}$. For every $\mathcal{X}$
from the interval (\ref{eq:X-interval}) there exists an
\[
s(\mathcal{X})\in-0.84134724633+\left[  -r,r\right]
\]
such that
\begin{equation}
x_{0}(\mathcal{X})=\left(  \mathcal{X},0,0,s(\mathcal{X})\right)
\label{eq:x0-formula}%
\end{equation}
is a point, which lies on a Lyapunov orbit, which we denote as $L_{\mathcal{X}}$. Moreover, we have a sequence of
points along $L_{\mathcal{X}}$, which passes within the $r$ distance (in
maximum norm) of the points from Table \ref{tabl:Lyap}. Moreover, we have the
following bound $T(\mathcal{X})$ for the period of $L_{\mathcal{X}}$
\begin{equation}
T(\mathcal{X})\in\lbrack3.0417517493,3.0417517846]\label{eq:Lyap-period}%
\end{equation}

\end{lemma}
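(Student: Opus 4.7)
The plan is to apply directly the parallel shooting framework of Section \ref{sec:shooting} with the specific choice $p(s)=(\mathcal{X},0,0,s)$ given in (\ref{eq:ps-for-Lyap}), using the Krawczyk criterion of Theorem \ref{th:Krawczyk} and treating $\mathcal{X}$ as an interval variable set to (\ref{eq:X-interval}). The number of inner shooting variables is $n=14$, so that the resulting periodic orbit is represented by the $n+2 = 16$ points listed in Table \ref{tabl:Lyap} with period $T(\mathcal X)=2(n+1)\tau = 30\tau$.

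First I would run, in non-rigorous floating point, a few Newton iterations (\ref{eq:Newton-initial}) for $F$ at the midpoint of the interval (\ref{eq:X-interval}), using the midpoints of Table \ref{tabl:Lyap} as an initial guess for $(x_1,\ldots,x_{14})$, the value $s\approx -0.84134724633$ as the initial guess for the $P_Y$-parameter on $\{Y=0,P_X=0\}$, and $\tau\approx 0.1014$ as the initial guess for the step. This produces an approximate zero $\mathbf{x}$ of $F$.

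Second, I would form the interval box $\mathbf{X}$ obtained by inflating $\mathbf{x}$ by $r=3.633\cdot 10^{-9}$ in the max norm in every component, and replacing the $\mathcal X$-component by the whole interval (\ref{eq:X-interval}). Using the CAPD library to produce rigorous enclosures of the PCR3BP flow $\Phi_{\tau}$ and its Jacobian on each of the $n+1$ short segments, I would compute an interval enclosure of $F(\mathbf{x})$ and of $[DF(\mathbf{X})]$, take $C$ as the floating point inverse of the midpoint of $DF(\mathbf{x})$, and then verify the Krawczyk inclusion $K(\mathbf{x},\mathbf{X},F)\subset\mathrm{int}\,\mathbf{X}$. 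Theorem \ref{th:Krawczyk} then yields, uniquely inside $\mathbf{X}$ and for every $\mathcal X$ in (\ref{eq:X-interval}), a zero $(s(\mathcal X),\tau(\mathcal X),x_1(\mathcal X),\ldots,x_{14}(\mathcal X))$ of $F$. The $\mathcal R$-symmetry argument already set out in Section \ref{sec:Lyap}, using (\ref{eq:symmetry-3bp}) together with the vanishing of $\pi_Y\Phi_\tau(x_n)$ and $\pi_{P_X}\Phi_\tau(x_n)$, then upgrades the point $x_0(\mathcal X)=(\mathcal X,0,0,s(\mathcal X))$ of (\ref{eq:x0-formula}) to a point on an $\mathcal R$-symmetric periodic orbit $L_{\mathcal X}$. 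The period bound (\ref{eq:Lyap-period}) follows by multiplying the Krawczyk enclosure of $\tau(\mathcal X)$ by $30$ and rounding outward.

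The main obstacle is that a single-shot integration over a full period of the Lyapunov orbit would produce catastrophic wrapping in the variational equations, making $\|Id - C\,[DF(\mathbf{X})]\|$ far larger than $1$ and destroying any hope of the strict inclusion $K(\mathbf{x},\mathbf{X},F)\subset\mathrm{int}\,\mathbf{X}$ at the $10^{-9}$ scale. The discretisation into $n+1=15$ Poincar\'e-map-sized segments of length $\tau\approx 0.101$ is precisely what keeps the interval Jacobian close to a shift block-matrix and thus keeps the Krawczyk operator a contraction. All remaining ingredients (the uniqueness statement, the $\mathcal R$-symmetry extension, and the computation of $T(\mathcal X)$) are immediate consequences once the inclusion has been validated on the computer.
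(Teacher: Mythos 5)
Your proposal is correct and follows essentially the same route as the paper: parallel shooting with $p(s)=(\mathcal{X},0,0,s)$ over $n=14$ intermediate points, a non-rigorous Newton step followed by Krawczyk validation with $\mathcal{X}$ ranging over the whole interval (\ref{eq:X-interval}), the $\mathcal{R}$-symmetry extension to a closed orbit, and $T(\mathcal{X})=2(n+1)\tau=30\tau$ for the period bound. The only cosmetic slip is describing $\mathcal{X}$ as a component of the Krawczyk box rather than as an interval parameter of $F$, which does not affect the argument.
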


\begin{remark}
In Table \ref{tabl:Lyap} we write out half of the points along the family of
periodic orbits, since the second half (the remaining fourteen points, to be
precise) follows from the $\mathcal{R}$-symmetry.
\end{remark}

\begin{remark}
Our estimate on the distance from the points from Table \ref{tabl:Lyap}
obtained by our computer program varies from point to point and is frequently
more accurate than stated in Lemma \ref{lem:Lyap-enclosure}, where we simply
write out the single $r$, which is the upper bound that can be applied to all
the points.
\end{remark}


\subsection{Bounds on the unstable manifolds of Lyapunov orbits
\label{sec:local-unstable}}

In the previous section we have shown how to compute the bound on the family
of Lyapunov orbits $L_{\mathcal{X}}$ containing points $x_{0}(\mathcal{X})$ given by
(\ref{eq:x0-formula}), with $\mathcal{X}$ from the interval
(\ref{eq:X-interval}). Here we fix a single periodic orbit for some
$\mathcal{X}$ from (\ref{eq:X-interval}) and discuss how one can obtain a
computer validated enclosure of its local unstable manifold. Before we
give the method, we introduce some notation.

We consider a Poincar\'{e} section $\Sigma=\left\{  Y=0\right\}  $ and define
$\rho:\mathbb{R}^{4}\rightarrow\mathbb{R}$ and $\mathcal{P}:\mathbb{R}%
^{4}\rightarrow\mathbb{R}^{4}$ as%
\begin{align*}
\rho\left(  x\right)   &  =\inf\left\{  t>0:\Phi_{t}\left(  x\right)
\in\Sigma\right\}  ,\\
\mathcal{P}\left(  x\right)   &  =\Phi_{\rho\left(  x\right)  }\left(
x\right)  .
\end{align*}
In other words, $\rho$ is the time along the flow to the section, and
$\mathcal{P}$ is the map which goes to the section along the flow.

\begin{remark}
The $\rho$ and $\mathcal{P}$ do not need to be globally defined. Whenever we
will use these functions in our computer assisted proofs, the CAPD library
verifies that the considered sets lie within the domains of these maps, and
that they are properly defined throughout the performed validations. (If a set
would not belong to the domain, the program would return an error and terminate.)
\end{remark}

Let us fix $\mathcal{X}$ from (\ref{eq:X-interval}) and a Lyapunov orbit $L_{\mathcal{X}}$
containing $x_{0}=x_{0}(\mathcal{X})$ given by (\ref{eq:x0-formula}). Denote the period of this orbit as $T=T(\mathcal{X})$. For convenience, let us write $N=2n+2$,
so that for our points $x_{0},\ldots,x_{N}$ on the Lyapunov orbit, whose
bounds we established in Lemma \ref{lem:Lyap-enclosure} we have
\begin{align}
\Phi_{T/N}\left(  x_{i}\right)   &  =x_{i+1}\qquad\text{for }i=0,\ldots
,N-1,\label{eq:flow-periodic-pt-to-pt}\\
x_{N}  &  =x_{0}.\nonumber
\end{align}
Note that from (\ref{eq:x0-formula}) we see that $x_{0}\in\Sigma$.

Let us consider now a sequence of invertible matrices \correction{comment 27}{}$A_{i}\in\mathbb{R}%
^{4\times4}$ for $i=0,\ldots,N$, with $A_{0}=A_{N}$ and define the following
maps%
\[
f_{i}:\mathbb{R}^{4}\rightarrow\mathbb{R}^{4},\qquad\text{for }i=1,\ldots,N,
\]
as%
\begin{align}
f_{i}\left(  v\right)   &  :=A_{i}^{-1}\left(  \Phi_{T/N}\left(
A_{i-1}v+x_{i-1}\right)  -x_{i}\right)  ,\qquad\text{for }i=1,\ldots
,N-1,\nonumber\\
f_{N}\left(  v\right)   &  :=A_{N}^{-1}\left(  \mathcal{P}\left(
A_{N-1}v+x_{N-1}\right)  -x_{N}\right) = A_{0}^{-1}\left(  \mathcal{P}\left(
A_{N-1}v+x_{N-1}\right)  -x_{0}\right)   . \label{eq:fi-def}%
\end{align}
In other words, for $i=1,\ldots,N-1$ we consider time shift maps along the
flow, expressed in local coordinates around $x_{i}$. The last map, $f_{N}$,
maps to the section $\Sigma=\left\{  Y=0\right\}  \subset\mathbb{R}^{4}$. This
means that $f_{N}\circ\ldots\circ f_{1}|_{\Sigma}:\Sigma\rightarrow\Sigma$.
From (\ref{eq:flow-periodic-pt-to-pt}) it follows that%
\[
f_{i}\left(  0\right)  =0\qquad\text{for }i=1,\ldots,N.
\]

For $F:\mathbb{R}^{4}\rightarrow\mathbb{R}^{4}$ defined as%
\begin{equation}
F=f_{N}\circ\ldots\circ f_{1} \label{eq:F-map-for-wu}%
\end{equation}
we see that the origin is a fixed point. Our objective will be to establish
bounds on the unstable manifold of the origin. To be more precise, we shall
establish bounds on the intersection of the unstable manifold of the Lyapunov
orbit with $\Sigma$, which is the unstable manifold of the origin for the map
$F|_{\Sigma}$.

First we introduce some \correction{comment 6}{notions}.

\begin{definition}
Let $\left\Vert \cdot\right\Vert $ be some norm in $\mathbb{R}^{3}.$ Let
$Q:\mathbb{R}^{4}\rightarrow\mathbb{R}$ be the function%
\begin{equation}
Q\left(  v_{1},\ldots,v_{4}\right)  =\left\vert v_{1}\right\vert -\left\Vert
\left(  v_{2},v_{3},v_{4}\right)  \right\Vert . \label{eq:cone-def-form}%
\end{equation}
We define the cone centered at a point $v\in\mathbb{R}^{4}$ as%
\[
Q^{+}\left(  v\right)  :=\left\{  w\in \mathbb{R}^{4} :Q\left(  w-v\right)  \geq0\right\}  .
\]
\end{definition}

We consider a sequence of cones defined by $Q_{i}:\mathbb{R}^{4}%
\rightarrow\mathbb{R}$, for $i=0,\ldots,N$ and assume that
\begin{equation}
Q_{N}=Q_{0}.\nonumber
\end{equation}
We take the norms $\left\Vert \cdot\right\Vert _{i}$ for $Q_{i}$ from
(\ref{eq:cone-def-form}) as
\[
\left\Vert \left(  x_{1},x_{2},x_{3}\right)  \right\Vert _{i}=\max\left\{
\left\vert x_{1}\right\vert /a_{i,1},\ldots,\left\vert x_{3}\right\vert
/a_{i,3}\right\}
\]
where $a_{i,k}\in\left(  0,1\right)  $ are fixed coefficients for
$i=1,\ldots,N$ and $k=1,2,3$. In other words, we use different norms in
(\ref{eq:cone-def-form}) to define different cones. Note that $\left\vert
y\right\vert \geq\left\Vert \left(  x_{1},x_{2},x_{3}\right)  \right\Vert
_{i}$ is equivalent to $a_{i,k}\left\vert y\right\vert \geq\left\vert
x_{k}\right\vert $, for $k=1,2,3$, so the cone $Q_{i}^{+}\left(  v\right)  $
can be expressed as
\begin{equation}
Q_{i}^{+}\left(  v\right)  =\left\{  v+\left(  t,tx_{1},tx_{2},tx_{3}\right)
:x_{k}\in\left[  -a_{i,k},a_{i,k}\right]  \text{ for }k=1,2,3\text{ and }%
t\in\mathbb{R}\right\}  . \label{eq:Qi-form}%
\end{equation}

\begin{remark}
\label{rem:cone-rep}The form (\ref{eq:Qi-form}) is convenient, since \correction{comment 7}{then the }cones defined by $Q_{i}^{+}$ can be represented in a computer assisted
implementation by a set
\[
V_{i}=\left[  1\right]  \times\left[  -a_{i,1},a_{i,1}\right]  \times
\ldots\times\left[  -a_{i,3},a_{i,3}\right]  .
\]
What we mean by this is that%
\begin{equation}
Q_{i}^{+}\left(  v\right)  =\left\{  v+tw:w\in V_{i}\text{ and }t\in
\mathbb{R}\right\}  . \label{eq:cone-set-rep}%
\end{equation}

\end{remark}

\begin{definition}
Let $B\subset\mathbb{R}^{4}$. We say that $f_{i}$ satisfies cone conditions in
$B$ iff for every $v\in B$%
\begin{equation}
f_{i}\left(  Q_{i}^{+}\left(  v\right)  \cap B\right)  \subset Q_{i+1}%
^{+}\left(  f_{i}(v)\right)  . \label{eq:cone-cond}%
\end{equation}

\end{definition}

Below lemma is our main tool for establishing bounds on the unstable manifold
of the origin for the map (\ref{eq:F-map-for-wu}).

\begin{lemma}
\cite[Lemma 6.3]{MR3032848}\label{lem:wu-cones} Let $B:=\left[  -1,1\right]
^{4}\subset\mathbb{R}^{4}$. Assume that $f_{1},\ldots,f_{N}$ satisfy cone
conditions in $B$. Let $m>1$ and assume that for $F=f_{N}\circ\ldots\circ
f_{1}$ the matrix $DF\left(  0\right)  $ has a single eigenvalue $\lambda$
satisfying $\left\vert \operatorname{Re}\lambda\right\vert >m$ and the
absolute values of the real parts of the remaining eigenvalues below $m$. If
also for every $v\in Q_{i}^{+}\left(  0\right)  \cap B$,%
\begin{equation}
\left\Vert f_{i}\left(  v\right)  \right\Vert >m\left\Vert v\right\Vert ,
\label{eq:expansion-in-cones}%
\end{equation}
then the unstable manifold of the origin for the map $F$ is parameterised as a
smooth curve $p^{\mathrm{u}}:\left[  -1,1\right]  \rightarrow B$ which
satisfies
\begin{align*}
p^{\mathrm{u}}\left(  0\right)   &  =0,\\
\pi_{1}p^{\mathrm{u}}  &  =Id,\\
p^{\mathrm{u}}\left(  \left[  -1,1\right]  \right)   &  \subset Q_{0}%
^{+}\left(  p^{\mathrm{u}}\left(  u\right)  \right)  ,\qquad\text{for every
}u\in\left[  -1,1\right]  ,
\end{align*}
and%
\[
\frac{d}{du}p^{\mathrm{u}}\left(  u\right)  \in Q_{0}^{+}\left(  0\right)
,\qquad\text{for every }u\in\left[  -1,1\right]  .
\]

\end{lemma}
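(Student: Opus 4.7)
The plan is to construct the unstable curve via Hadamard's graph transform on a space of horizontal Lipschitz graphs inside the cones. By Remark \ref{rem:cone-rep} the cone $Q_0^+(0)$ can be identified with the set of graphs $\Gamma_g = \{(u, g(u)) : u \in [-1,1]\}$ where $g : [-1,1] \to [-a_{0,1}, a_{0,1}] \times [-a_{0,2}, a_{0,2}] \times [-a_{0,3}, a_{0,3}]$ is Lipschitz with a constant matched to the apertures $a_{0,k}$. Let $\mathcal{G}$ denote this set of Lipschitz maps with $g(0)=0$, equipped with the supremum norm; it is complete.

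First I would verify that $F$ sends horizontal graphs to horizontal graphs. Fix $g \in \mathcal{G}$. Iterating the cone conditions along $f_1, \ldots, f_N$ gives $F(\Gamma_g) \subset Q_0^+(0)$, since $\Gamma_g \subset Q_0^+(0)$ and $Q_N = Q_0$. The expansion assumption $\|f_i(v)\| > m \|v\|$ on $Q_i^+(0) \cap B$ together with the cone-aperture control on tangent vectors of $\Gamma_g$ forces the first-coordinate projection $\pi_1 \circ F |_{\Gamma_g}$ to be a monotone diffeomorphism with derivative of modulus at least $m$, hence its image contains $[-1,1]$. Consequently $F(\Gamma_g) \cap B$ is the graph of a function $Tg : [-1,1] \to \mathbb{R}^3$, and the cone condition together with $F(0)=0$ force $Tg \in \mathcal{G}$ with $Tg(0)=0$.

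Next I would check that $T : \mathcal{G} \to \mathcal{G}$ is a contraction with factor at most $1/m < 1$ in the sup norm. For $g_1, g_2 \in \mathcal{G}$ and $u \in [-1,1]$ one writes $Tg_k(u) = \pi_{234} F(u_k^\ast, g_k(u_k^\ast))$ where $u_k^\ast$ is the unique preimage under $\pi_1 \circ F(\cdot, g_k(\cdot))$ of $u$; using the $m$-expansion of $\pi_1 \circ F$ and the Lipschitz bound on $g_k$ one estimates $|u_1^\ast - u_2^\ast| \le \tfrac{1}{m} \|g_1 - g_2\|_\infty$, and the cone condition applied to the remaining three coordinates transports this to $\|Tg_1 - Tg_2\|_\infty \le \tfrac{1}{m} \|g_1 - g_2\|_\infty$. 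Banach's fixed-point theorem then produces a unique fixed point $p^{\mathrm{u}} = \lim_n T^n 0$, which by construction satisfies $p^{\mathrm{u}}(0)=0$, $\pi_1 p^{\mathrm{u}} = Id$, and $\Gamma_{p^{\mathrm{u}}} \subset Q_0^+(0)$.

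Finally I would upgrade $p^{\mathrm{u}}$ from Lipschitz to smooth using the spectral hypothesis on $DF(0)$ and a standard bootstrap: the fixed-point identity $F(\Gamma_{p^{\mathrm{u}}}) \cap B = \Gamma_{p^{\mathrm{u}}}$, combined with the gap between the dominant eigenvalue and the rest, allows one to run the graph transform on jets and obtain a $C^r$ fixed point (this is the $C^r$-section theorem of Hirsch--Pugh--Shub applied to the jet bundle). The derivative condition $\tfrac{d}{du} p^{\mathrm{u}}(u) \in Q_0^+(0)$ then follows by differentiating the invariance identity and reusing the cone condition for $DF$. The main obstacle in the argument is the contraction step: the interaction between the apertures $a_{0,k}$ of the cones and the expansion rate $m$ has to be tracked carefully, because an overly wide cone would degrade the contraction factor below $1/m$; the standing hypotheses exactly rule this out, and keeping track of the coordinate-wise Lipschitz constants is where the calculation becomes delicate.
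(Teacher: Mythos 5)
Your proposal is correct and follows essentially the same route as the paper: the paper does not prove this lemma in detail but cites \cite[Lemma 6.3]{MR3032848} and remarks that the argument is the graph transform method, with the graphs propagated successively through $f_1,\ldots,f_N$ before returning to the original local coordinates, which is precisely the contraction-on-Lipschitz-graphs construction you outline. The only points worth tightening in a full write-up are the standard one of restricting at each stage to the portion of the graph whose image remains in $B$ before invoking the cone condition for the next $f_i$, and the use of the spectral hypothesis on $DF(0)$ not only for the smoothness bootstrap but also to identify the fixed-point curve with the (one-dimensional) unstable manifold of the origin.
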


\begin{remark}
In \cite[Lemma 6.3]{MR3032848} is stated for the setting where $F$ is a full
turn along the periodic orbit. Here we have a sequence of local maps, which
when composed constitute the full turn. The proof in such setting is analogous
to \cite{MR3032848}, by using the graph transform method. The only needed
modification with respect to \cite{MR3032848} is that here the graphs need to
be propagated successively by $f_{1},f_{2},\ldots,f_{N},$ after which they
return to the same local coordinates. The graph transform for each successive
$f_{i}$ follows from an identical construction as the one from
\cite{MR3032848} (which there is done for the single $F$). By composing the
graph transforms for the successive $f_{i}$ we obtain a graph transform for
$F$.
\end{remark}

\begin{remark}
The benefit of considering several maps and shooting is that this requires
shorter integration times, which improves accuracy in the interval arithmetic
computations. This is the only reason why we shoot between 29 points along
Lyapunov orbits, instead of considering a single turn.
\end{remark}

\begin{remark}
In our computer assisted proof, we choose $A_{0}$ so that the derivative of
$F=f_{N}\circ\ldots\circ f_{1}$ at the origin is close to diagonal. The
eigenvalues of $F$ are $\lambda,\frac{1}{\lambda},1,0$. (The zero comes from
the fact that $f_{N}$ maps to the section $\Sigma$.) We can validate the bound
on $\lambda$ by using the Gersgorin theorem. We choose the remaining $A_{i}$
so that the derivatives of $f_{i}$ at the origin are close to diagonal.
\end{remark}

\begin{remark}
\label{rem:cone-propagation} The validation of cone conditions is done as
follows. Take an interval set $V_{i}$ for which we have (\ref{eq:cone-set-rep}%
). Then by the mean value theorem the fact that
\begin{equation}
\left[  Df_{i}\left(  B\right)  \right]  V_{i}\subset Q_{i+1}\left(  0\right)
\label{eq:cone-valid}%
\end{equation}
implies (\ref{eq:cone-cond}). To check (\ref{eq:cone-valid}) it is enough to
compute the interval set $w=\left[  Df_{i}\left(  B\right)  \right]  V_{i}$
and validate that%
\[
\left[  \frac{w}{\pi_{1}w}\right]  \subset V_{i+1}.
\]
This means that the cone condition is straightforward to validate from the
interval enclosure of the derivative of the map. \correction{comment 8}{The CAPD library has built in methods for the computation of Poincar\'e maps and their derivatives, hence it is a good tool for the validation of the cone conditions. The methodology outlining these interval arithmetic techniques is described in \cite{PoincWZ}.}
\end{remark}

In our computer assisted proof we use the following lemma to validate
(\ref{eq:expansion-in-cones}) for the maximum norm.

\begin{lemma}
\label{lem:expansion-technical}Consider $Q\left(  v_{1},\ldots,v_{4}\right)
=\left\vert v_{1}\right\vert -\left\Vert \left(  v_{2},v_{3},v_{4}\right)
\right\Vert $ with%
\[
\left\Vert \left(  x_{1},x_{2},x_{3}\right)  \right\Vert =\max\left\{
\left\vert x_{1}\right\vert /a_{1},\left\vert x_{1}\right\vert /a_{2}%
,\left\vert x_{3}\right\vert /a_{3}\right\}  ,
\]
where $a_{1},a_{2},a_{3}\in\left(  0,1\right)  $. Take $f:B\rightarrow
\mathbb{R}^{4}$ such that $f\left(  0\right)  =0$ and let
\begin{equation}
\left[  Df\left(  B\right)  \right]  =\left(
\begin{array}
[c]{cc}%
A_{11} & A_{12}\\
A_{21} & A_{22}%
\end{array}
\right)  ,\label{eq:matrix-A-tech-1}
\end{equation}
where $A_{11}$, $A_{12}$, $A_{21}$ and $A_{22}$ are $1\times1$, $1\times3$,
$3\times1$ and $3\times3$ interval matrices, respectively. If $A_{11}>c>0$ and
$m=c-\left\Vert A_{12}\right\Vert _{\max}\max\left(  a_{1},a_{2},a_{3}\right)
$, then
\[
\left\Vert f\left(  v\right)  \right\Vert _{\max}\geq m\left\Vert v\right\Vert
_{\max}\qquad\text{for every }v\in Q^{+}\left(  0\right)  \cap B.
\]

\end{lemma}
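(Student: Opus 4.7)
The plan is to reduce the estimate to a scalar mean value theorem argument applied only to the first component $f_1$, since the cone constraint forces $|v_1|$ to dominate $\|v\|_{\max}$ on $Q^+(0)$.

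First I would unpack what $v \in Q^+(0)$ says in coordinates. The definition of the weighted max norm gives $|v_{k+1}| \leq a_k |v_1|$ for $k=1,2,3$, and because each $a_k \in (0,1)$ this in particular yields $\|v\|_{\max} = |v_1|$. In the degenerate case $v_1 = 0$ the cone condition forces $v = 0$ and the inequality is trivial, so I may assume $v_1 \neq 0$.

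Next I would apply the scalar mean value theorem to $f_1 : B \to \mathbb{R}$. Since $f_1(0) = 0$ and $B = [-1,1]^4$ is convex, there exists a single point $\xi$ on the segment from $0$ to $v$ (hence $\xi \in B$) with
\[
f_1(v) = \frac{\partial f_1}{\partial v_1}(\xi)\, v_1 + \sum_{k=1}^{3} \frac{\partial f_1}{\partial v_{k+1}}(\xi)\, v_{k+1}.
\]
By the hypothesis on the block decomposition (\ref{eq:matrix-A-tech-1}), the coefficient of $v_1$ is at least $c$, while the remaining three coefficients are entries of $A_{12}$ and so are bounded in absolute value by $\|A_{12}\|_{\max}$. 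A triangle-inequality estimate together with the cone bounds $|v_{k+1}| \leq a_k |v_1|$ gives
\[
|f_1(v)| \geq c|v_1| - \|A_{12}\|_{\max} \max\{|v_2|,|v_3|,|v_4|\} \geq \bigl(c - \|A_{12}\|_{\max}\max(a_1,a_2,a_3)\bigr)|v_1| = m\|v\|_{\max}.
\]
The conclusion then follows from $\|f(v)\|_{\max} \geq |f_1(v)| \geq m\|v\|_{\max}$.

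I do not anticipate any real obstacle; the argument is a direct combination of the interval enclosure of $Df$ with the mean value theorem. The only subtle point worth flagging is that the scalar mean value theorem applied to $f_1$ yields a common $\xi$ for all four partial derivatives, which is precisely what legitimises substituting entries of the single interval matrix $[Df(B)]$ into the coefficients of each $v_j$ simultaneously. A vectorial mean value theorem (with different intermediate points per component) would not play well with the sign hypothesis $A_{11} > c$, so this observation is essential.
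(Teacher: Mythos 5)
Your proof is correct and takes essentially the same route as the paper: both establish $\left\Vert v\right\Vert_{\max}=|v_1|$ from the cone inequality, apply the mean value theorem, and isolate the first component to get the cancellation estimate. One small remark on your concluding ``subtle point'': the common intermediate point $\xi$ is in fact not what makes the interval substitution legitimate. By definition the interval matrix $\left[Df(B)\right]$ encloses $\frac{\partial f_i}{\partial x_j}$ over all of $B$, so even if each partial derivative of $f_1$ were evaluated at a \emph{different} intermediate point, every coefficient would still lie in its corresponding entry of $\left[Df(B)\right]$, and in particular the coefficient of $v_1$ would still exceed $c$; the paper's one-line inclusion $f(v)-f(0)\in\left[Df(B)\right]v$ already absorbs this. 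Also, to get $|A_{12}(v_2,v_3,v_4)|\leq\left\Vert A_{12}\right\Vert_{\max}\max(|v_2|,|v_3|,|v_4|)$ you should invoke the induced operator norm directly (for the max norm this is the row sum), rather than the weaker statement that each entry of $A_{12}$ is individually bounded by $\left\Vert A_{12}\right\Vert_{\max}$, which after the triangle inequality would leave a spurious factor of $3$. Neither of these affects the validity of the argument.
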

\begin{proof}
The proof is given in \ref{sec:expansion-technical}.
\end{proof}

A mirror result can be formulated to obtain a bound in the other direction.
This is relevant for the method for obtaining the bound
(\ref{eq:Lip-g-assumption}) outlined in \ref{sec:appendix}. We place this
lemma here since it follows from similar arguments to Lemma \ref{lem:wu-cones}.

\begin{lemma}
\label{lem:Lip-technical}Consider $Q\left(  v_{1},\ldots,v_{4}\right)
=\left\vert v_{1}\right\vert -\left\Vert \left(  v_{2},v_{3},v_{4}\right)
\right\Vert $ with%
\[
\left\Vert \left(  x_{1},x_{2},x_{3}\right)  \right\Vert =\max\left\{
\left\vert x_{1}\right\vert /a_{1},\left\vert x_{1}\right\vert /a_{2}%
,\left\vert x_{3}\right\vert /a_{3}\right\}  ,
\]
where $a_{1},a_{2},a_{3}\in\left(  0,1\right)  $. Take $f:B\rightarrow
\mathbb{R}^{4}$ and let
\begin{equation}
\left[  Df\left(  B\right)  \right]  =\left(
\begin{array}
[c]{cc}%
A_{11} & A_{12}\\
A_{21} & A_{22}%
\end{array} 
\right)  , \label{eq:matrix-A-tech}
\end{equation}
where $A_{11}$, $A_{12}$, $A_{21}$ and $A_{22}$ are $1\times1$, $1\times3$,
$3\times1$ and $3\times3$ interval matrices, respectively. If%
\begin{align}
a &  :=\max\left(  a_{1},a_{2},a_{3}\right)  ,\label{eq:a-cone-upper-bound}\\
\bar{m} &  :=\max\left(  \left\vert A_{11}\right\vert +a\left\Vert
A_{12}\right\Vert _{\max},\left\Vert A_{21}\right\Vert _{\max}+a\left\Vert
A_{22}\right\Vert _{\max}\right)  ,\label{eq:m-cone-upper-bound}%
\end{align}
then%
\[
\left\Vert f\left(  v\right)  \right\Vert _{\max}\leq\bar{m}\left\Vert
v\right\Vert _{\max}\qquad\text{for every }v\in Q^{+}\left(  0\right)  \cap B.
\]

\end{lemma}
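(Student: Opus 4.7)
The plan is to bound $\|f(v)\|_{\max}$ componentwise by combining the mean value theorem with the constraint that $v$ lies in the cone $Q^{+}(0)$. First, I would unpack the cone condition: writing $v=(v_{1},v_{2},v_{3},v_{4})$, membership $v\in Q^{+}(0)$ together with the weighted maximum norm defining $Q$ gives $|v_{j+1}|\le a_{j}|v_{1}|$ for $j=1,2,3$. Since each $a_{j}\in(0,1)$, this immediately yields $\|v\|_{\max}=|v_{1}|$ and $\max(|v_{2}|,|v_{3}|,|v_{4}|)\le a\|v\|_{\max}$ for $a=\max(a_{1},a_{2},a_{3})$.

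Next, noting that the statement is meaningful only when $f(0)=0$ (as is the case in the companion Lemma \ref{lem:expansion-technical}; otherwise the bound fails at $v=0$ and the displayed inequality is really a Lipschitz bound on $f(v)-f(0)$), I would apply the mean value theorem blockwise. Since $B$ is convex and contains the segment $[0,v]$,
\[
f(v)-f(0)=\left(\int_{0}^{1}Df(tv)\,dt\right)v,
\]
and the block splitting of $Df$ in (\ref{eq:matrix-A-tech}) gives the two estimates
\[
|f_{1}(v)|\le |A_{11}|\,|v_{1}|+\|A_{12}\|_{\max}\max(|v_{2}|,|v_{3}|,|v_{4}|),
\]
\[
\|(f_{2}(v),f_{3}(v),f_{4}(v))\|_{\max}\le \|A_{21}\|_{\max}|v_{1}|+\|A_{22}\|_{\max}\max(|v_{2}|,|v_{3}|,|v_{4}|),
\]
where we used the definition of the subordinate matrix norm $\|\cdot\|_{\max}$ and the fact that each partial derivative evaluated at $tv\in B$ lies in the corresponding interval block of $[Df(B)]$.

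Substituting the cone-inherited inequalities $|v_{1}|=\|v\|_{\max}$ and $\max(|v_{2}|,|v_{3}|,|v_{4}|)\le a\|v\|_{\max}$ into the two bounds above produces
\[
|f_{1}(v)|\le (|A_{11}|+a\|A_{12}\|_{\max})\|v\|_{\max},
\]
\[
\|(f_{2}(v),f_{3}(v),f_{4}(v))\|_{\max}\le (\|A_{21}\|_{\max}+a\|A_{22}\|_{\max})\|v\|_{\max}.
\]
Taking the maximum of these two right-hand sides gives exactly $\bar m\|v\|_{\max}$ as in (\ref{eq:m-cone-upper-bound}), completing the argument. The proof is essentially routine once the cone inclusion is translated into the pointwise componentwise inequalities on $v$; the only point that requires care is that the off-diagonal contribution $\|A_{12}\|_{\max}$ (respectively $\|A_{22}\|_{\max}$) is multiplied by the contraction factor $a$ coming from the shape of the cone, which is what makes $\bar m$ sharper than a naive bound using $\|[Df(B)]\|_{\max}$ alone.
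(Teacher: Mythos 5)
Your proof is correct and follows essentially the same route as the paper's: translate the cone condition into $\left\Vert (v_{2},v_{3},v_{4})\right\Vert _{\max}\leq a\left\vert v_{1}\right\vert =a\left\Vert v\right\Vert _{\max}$, apply the mean value theorem blockwise to $[Df(B)]$, and take the maximum of the two resulting row bounds. Your remark that the hypothesis $f(0)=0$ is needed (it is stated only in the companion Lemma \ref{lem:expansion-technical} but is used implicitly in the step $f(v)\in\left[Df(B)\right]v$) is accurate and a worthwhile observation.
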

\begin{proof}The proof is given in \ref{sec:Lip-technical}.\end{proof}

\begin{remark}
In practice, the term $A_{11}$ in (\ref{eq:matrix-A-tech-1})--(\ref{eq:matrix-A-tech}), is associated with hyperbolic expansion, and
dominates in (\ref{eq:m-cone-upper-bound}). Also, in our application the number $a$ from (\ref{eq:a-cone-upper-bound}) is 
small, so we obtain $m\approx \bar{m}\approx\left\vert A_{11}\right\vert $; naturally $m< \bar m$.
\end{remark}

With the aid of Lemma \ref{lem:wu-cones} we have validated the following result.

\begin{lemma}
\label{lem:Wu-bound-CAP} Let $r=3\cdot10^{-8}$, $B=[-r,r]^{4}$, and let
\begin{equation}
A_{0}=\,\left(
\begin{array}
[c]{cccc}%
0.280324 & -0.220733 & 0.280324 & 0\\
0 & 0 & 0 & 0.816632\\
1 & 0 & -1 & -1\\
-0.343269 & 1 & -0.343269 & 0
\end{array}
\right)  .\label{eq:A0-choice}%
\end{equation}

Consider $L=5\cdot10^{-6}$ and cones defined as
\[
Q_{0}^{+}\left(  v\right)  =\left\{  v+\left(  t,tx_{2},tx_{3},tx_{4}\right)
:x_{k}\in\left[  -L,L\right]  \text{ for }k=2,3,4\text{ and }t\in
\mathbb{R}\right\}  .
\]
Then the unstable manifold of the origin for the map $F$ is parameterised as a
smooth curve $p^{\mathrm{u}}:\left[  -r,r\right]  \rightarrow B$ which
satisfies
\begin{align*}
p^{\mathrm{u}}\left(  0\right)   &  =0,\\
\pi_{1}p^{\mathrm{u}} &  =Id,\\
p^{\mathrm{u}}\left(  \left[  -r,r\right]  \right)   &  \subset Q_{0}%
^{+}\left(  p^{\mathrm{u}}\left(  u\right)  \right)  ,\qquad\text{for every
}u\in\left[  -r,r\right]  ,
\end{align*}
and%
\[
\frac{d}{du}p^{\mathrm{u}}\left(  u\right)  \in Q_{0}^{+}\left(  0\right)
,\qquad\text{for every }u\in\left[  -r,r\right]  .
\]

\end{lemma}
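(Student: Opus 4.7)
The plan is to apply Lemma \ref{lem:wu-cones} with a specific choice of the coordinate changes $A_i$ and the sequence of cones $Q_i$. The matrix $A_0$ is already prescribed by (\ref{eq:A0-choice}); its columns are chosen to approximately diagonalise $DF(0)$, so that the unstable direction for $F|_{\Sigma}$ is aligned with the first coordinate axis in the new coordinates. For $i=1,\ldots,N-1$ I would choose $A_i$ non-rigorously so that the linearisation $Df_i(0)$ obtained from the variational equations along the Lyapunov orbit is as close to diagonal as possible; this is exactly the strategy indicated in the remark preceding the lemma, and it is what keeps the cone conditions tight enough to survive over all $N$ composition steps. The cones $Q_i$ are taken in the form (\ref{eq:Qi-form}), with coefficients $a_{i,k}$ that are propagated non-rigorously along the orbit (using the linear variational flow) before being inflated slightly to obtain the rigorous cone set representation $V_i$ of Remark \ref{rem:cone-rep}, with boundary condition $V_N = V_0$ matching the final cone in (\ref{eq:Qi-form}) with $a_{0,k} = L = 5\cdot 10^{-6}$.

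Next, the cone conditions (\ref{eq:cone-cond}) are checked for each $f_i$ on the box $B = [-r,r]^4$ by the procedure in Remark \ref{rem:cone-propagation}: one computes the interval enclosure $[Df_i(B)]$ with CAPD (propagating the variational equations for time $T/N$, and for $i=N$ composing with the derivative of the Poincaré return map to $\Sigma$), forms $[Df_i(B)]V_i$, and verifies the componentwise inclusion $[Df_i(B)]V_i/\pi_1([Df_i(B)]V_i) \subset V_{i+1}$. Note that $A_N = A_0$ is forced by the definition (\ref{eq:fi-def}), which is compatible with the periodic closure $V_N = V_0$. In parallel, the expansion estimate (\ref{eq:expansion-in-cones}) is checked for each $f_i$ by applying Lemma \ref{lem:expansion-technical} to $[Df_i(B)]$ in its $1{+}3$ block decomposition; since $A_i$ is chosen to almost diagonalise $Df_i(0)$ and the cone widths $a_{i,k}$ are small, one obtains $m_i$ just below the modulus of the unstable eigenvalue of $Df_i(0)$, and the product $\prod_i m_i$ exceeds the required $m>1$ for $F$.

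The eigenvalue hypothesis on $DF(0)$ in Lemma \ref{lem:wu-cones} is handled as follows: compose the interval enclosures to obtain $[DF(0)]$, which by construction of $A_0$ is close to diagonal with diagonal entries close to $\lambda, 1/\lambda, 1, 0$; applying the Gersgorin theorem (as suggested in the remark about $A_0$) then isolates a single real eigenvalue with modulus exceeding any chosen $m>1$ bigger than the other three. Finally, substituting these validated bounds into Lemma \ref{lem:wu-cones} produces the claimed curve $p^{\mathrm{u}}:[-r,r]\to B$, and the identities $p^{\mathrm{u}}(0)=0$, $\pi_1 p^{\mathrm{u}} = \mathrm{Id}$ together with the cone inclusions for both $p^{\mathrm{u}}$ and its derivative are exactly the output of that lemma.

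The main obstacle is not conceptual but numerical: keeping the interval enclosures $[Df_i(B)]$ sharp enough, over all $N = 2n{+}2 = 30$ subintervals that cover a full revolution of the Lyapunov orbit, so that the cone inclusions $[Df_i(B)]V_i \subset Q_{i+1}^+(0)$ and the per-step expansion factors $m_i$ close up correctly when returning to the Poincaré section. This requires (i) short integration times per step (which is the reason stated earlier for using $N=30$ instead of one full turn), (ii) a careful non-rigorous choice of all $A_i$ so that off-diagonal blocks in $Df_i(0)$ are small, and (iii) a sufficiently small value of $r = 3\cdot 10^{-8}$ so that the wrapping effect in $[Df_i(B)]$ does not spoil the Gersgorin-based eigenvalue separation for $[DF(0)]$. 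Once those choices are made, the verification reduces to a finite sequence of interval-arithmetic inclusions that the CAPD library performs automatically.
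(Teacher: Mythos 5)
Your proposal follows essentially the same route as the paper: Lemma \ref{lem:wu-cones} applied to the $N=30$ local maps, with $A_i$ chosen to near-diagonalise $Df_i(0)$, cone conditions checked via Remark \ref{rem:cone-propagation}, expansion via Lemma \ref{lem:expansion-technical}, and the eigenvalue separation for $DF(0)$ via Gersgorin — exactly what the computer-assisted validation does (the paper reports the uniform expansion constant $m=1.2767546773$). One small correction: hypothesis (\ref{eq:expansion-in-cones}) requires a \emph{single} $m>1$ that simultaneously lower-bounds the expansion of \emph{every} $f_i$ in its cone and separates the unstable eigenvalue of $DF(0)$ from the rest, not merely that the product $\prod_i m_i$ of per-step factors be large; in practice this is harmless since each per-step factor exceeds $1.27$, but the condition as you phrased it is weaker than what the lemma demands.
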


\begin{corollary}
\label{cor:ps-der}The curve $w^{\mathrm{u}}\left(  u\right)  :=x_{0}%
+A_{0}p^{\mathrm{u}}\left(  u\right)  $ lies on the unstable manifold of the
Lyapunov orbit containing $x_{0}.$ The curve $w^{\mathrm{u}}\left(  u\right)
$ is contained in $x_{0}+A_{0}Q_{0}^{+}\left(  0\right)  $ and $\frac{d}%
{du}w^{\mathrm{u}}\left(  u\right)  \in A_{0}V_{0}$ where%
\[
V_{0}=\left\{  1\right\}  \times\left[  -L,L\right]  \times\left[
-L,L\right]  \times\left[  -L,L\right]  .
\]
Since to define $F$ in (\ref{eq:F-map-for-wu}) we take $f_{N}=\mathcal{P}$
(see (\ref{eq:fi-def})), and $\mathcal{P}$ maps to $\Sigma$, we know that
$w^{\mathrm{u}}\left(  u\right)  \subset\Sigma$.
\end{corollary}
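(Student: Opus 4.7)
The plan is to unpack Lemma~\ref{lem:Wu-bound-CAP} back into the original coordinates. First, I would observe that the map $F=f_N\circ\ldots\circ f_1$ is affinely conjugate to the Poincar\'e return map $\mathcal{P}|_\Sigma$ on a neighbourhood of $x_0$ via $\Psi_0(v):=A_0v+x_0$. This is immediate from (\ref{eq:fi-def}): the intermediate $f_i$ pass through local frames based at $x_1,\ldots,x_{N-1}$, but these cancel telescopically under composition, and the final $f_N$ returns to $\Sigma$ in the coordinates centered at $x_0$. Hence $\Psi_0\circ F=\mathcal{P}\circ\Psi_0$ on a neighbourhood of $0$ in the local model of $\Sigma$, so the unstable manifold of the origin for $F$ is carried by $\Psi_0$ onto the intersection with $\Sigma$ of the unstable manifold of $x_0$ for $\mathcal{P}|_\Sigma$. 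Flowing by $\Phi_t$ then realises this set as the local unstable manifold of the Lyapunov orbit $L_\mathcal{X}$, so $w^{\mathrm{u}}(u)=\Psi_0(p^{\mathrm{u}}(u))$ lies on that manifold.

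For the cone enclosure, applying the conclusion $p^{\mathrm{u}}([-r,r])\subset Q_0^+(p^{\mathrm{u}}(u))$ of Lemma~\ref{lem:Wu-bound-CAP} at $u=0$, together with $p^{\mathrm{u}}(0)=0$, yields $p^{\mathrm{u}}([-r,r])\subset Q_0^+(0)$. Since $Q_0^+(0)$ is a linear cone and $\Psi_0$ is affine, pushing forward gives $w^{\mathrm{u}}([-r,r])\subset x_0+A_0 Q_0^+(0)$. For the tangent vector, the lemma also gives $\frac{d}{du}p^{\mathrm{u}}(u)\in Q_0^+(0)$; combined with $\pi_1 p^{\mathrm{u}}=Id$, the first component of $\frac{d}{du}p^{\mathrm{u}}(u)$ equals $1$, so comparing with the representation (\ref{eq:Qi-form}) with $t=1$ and $a_{0,k}=L$ places $\frac{d}{du}p^{\mathrm{u}}(u)\in V_0$. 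Differentiating $w^{\mathrm{u}}(u)=x_0+A_0 p^{\mathrm{u}}(u)$ yields $\frac{d}{du}w^{\mathrm{u}}(u)\in A_0 V_0$.

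Finally, the inclusion $w^{\mathrm{u}}(u)\in\Sigma$ follows from $F$-invariance of the local unstable manifold: every point on it is the image under $F$ of another such point, and therefore lies in the range of $f_N$. By the defining formula in (\ref{eq:fi-def}), this range is contained in $A_0^{-1}(\Sigma-x_0)$, so $w^{\mathrm{u}}(u)=\Psi_0(p^{\mathrm{u}}(u))\in\Sigma$. The main obstacle is minimal; the entire content reduces to bookkeeping across the conjugation $\Psi_0$, and the only subtlety to keep track of is that the intermediate cones $Q_1,\ldots,Q_{N-1}$ are auxiliary and do not appear in the final statement because the periodicity condition $Q_N=Q_0$ closes the loop back to the frame at $x_0$.
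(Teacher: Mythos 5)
Your argument is correct and matches the paper's implicit reasoning; the corollary is stated without a separate proof, and what you write is exactly the unwinding of Lemma~\ref{lem:Wu-bound-CAP} through the affine change $\Psi_0(v)=A_0v+x_0$. One small notational slip: the telescoping in (\ref{eq:fi-def}) yields $\Psi_0\circ F = \mathcal{P}\circ\Phi_{(N-1)\tau}\circ\Psi_0$ with $\tau=T/N$, not $\Psi_0\circ F=\mathcal{P}\circ\Psi_0$, since $\mathcal{P}$ applied near $x_0$ on $\Sigma$ lands near $x_{n+1}$, not back near $x_0$; the map actually conjugate to $F$ is the full one-period return map to $\Sigma$ at $x_0$ (equivalently $\mathcal{P}^2$ restricted to $\Sigma$ near $x_0$), which does fix $x_0$. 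This does not affect any of your subsequent steps. In particular, your argument for $w^{\mathrm{u}}(u)\subset\Sigma$ — every point of the parameterised local unstable curve is in the image of $F$, because $F$ expands the first coordinate by the factor $m>1$ so preimages remain in $p^{\mathrm{u}}([-r,r])$, and hence lies in the range of $f_N\subset A_0^{-1}(\Sigma-x_0)=A_0^{-1}\Sigma$ — is precisely the paper's terse justification in the final sentence of the corollary.
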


\begin{remark}
The matrices $A_i$ used for the coordinate changes as well as the cones $Q_i$ are chosen automatically by our computer program. We do not write them out here, since the important bounds for the proof are at the point $x_{0}=x_{0}(\mathcal{X})$, and are given in Lemma \ref{lem:Wu-bound-CAP}.
\end{remark}

\begin{remark}
To validate assumption (\ref{eq:expansion-in-cones}) of Lemma
\ref{lem:wu-cones} we have used Lemmas \ref{lem:expansion-technical} and
obtaining
$
m=1.2767546773,
$
for the bound from (\ref{eq:expansion-in-cones}).
\end{remark}

\begin{remark}
\label{rem:Lip-local}We have also obtained the following bound by using Lemma
\ref{lem:Lip-technical} for maps (\ref{eq:fi-def})%
\[
\left\Vert f_{i}\left(  v_{i}\right)  \right\Vert _{\max}\leq\bar{m}\left\Vert
v_{i}\right\Vert _{\max}\qquad\text{for }v_{i}\in Q_{i}^{+}\left(  0\right)
\cap B\text{ and }i=1,\ldots,N,
\]
where $\bar{m}=1.2767636743.$
\end{remark}

So far we have discussed how to obtain a bound on
the fiber for a fixed point at the origin of our section-to-section map $F$
defined in (\ref{eq:F-map-for-wu}). Such fixed point resulted from
intersecting the Lyapunov orbit with $\{Y=0\}$. In the future discussion we
will need to consider the problem in the extended phase space, since the
PER3BP is not autonomous. In the extended phase space the fixed point at the
origin for $F$ becomes an invariant curve $\left\{  0\right\}  \times
\mathbb{T}$. The unstable fiber for a given point $\left(  0,\lambda\right)
$, with $\lambda\in\mathbb{T}$, on this curve is contained in the extended
phase space. Since the return times to the section $\{Y=0\}$ differ from point
to point, such fiber does not need to be contained in $\left\{  \theta
=\lambda\right\}  $. Below we discuss a method with which we establish bounds
on the unstable fibers of points from $\left\{  0\right\}  \times\mathbb{T}$
in the extended phase space.

To make the above discussion more precise, we consider
the flow of the PCR3BP in the extended phase space and denote it as
$\tilde{\Phi}_{t}$. We consider the Poincar\'{e} section $\tilde{\Sigma
}=\left\{  Y=0\right\}  =\Sigma\times\mathbb{T}$, define $\tilde{\rho
}:\mathbb{R}^{4}\times\mathbb{T}\rightarrow\mathbb{R}$ and define
$\mathcal{\tilde{P}}:\mathbb{R}^{4}\times\mathbb{T}\rightarrow\mathbb{R}%
^{4}\times\mathbb{T}$ as%
\begin{align*}
\tilde{\rho}\left(  x\right)   &  =\inf\left\{  t>0:\tilde{\Phi}_{t}\left(
x\right)  \in\tilde{\Sigma}\right\}  ,\\
\mathcal{\tilde{P}}\left(  x\right)   &  =\tilde{\Phi}_{\tilde{\rho}\left(
x\right)  }\left(  x\right)  .
\end{align*}
Let also $\tilde{A}_{0}$ be a $5\times5$ matrix defined as
\begin{equation}
\tilde{A}_{0}=\left(
\begin{array}
[c]{cc}%
A_{0} & 0\\
0 & 1
\end{array}
\right)  , \label{eq:A0-tilde}%
\end{equation}
where $A_{0}$ is from (\ref{eq:A0-choice}). We define $\tilde{F}:\tilde
{\Sigma}\rightarrow\tilde{\Sigma}$ as
\[
\tilde{F}\left(  x,\theta\right)  =\tilde{A}_{0}^{-1}\left(  \mathcal{\tilde
{P}}^{2}\left(  \left(  x_{0},0\right)  +\tilde{A}_{0}\left(  x,\theta\right)
\right)  -\left(  x_{0},0\right)  \right)  .
\]
Then $\left\{  0\right\}  \times\mathbb{T}$ becomes a an invariant curve for
the map $\tilde{F}$. We will now show how to obtain bounds for the unstable
fibers of a point $\left(  0,\lambda\right)  $ on such curve, in the extended
phase space.

\begin{lemma}
\label{lem:fiber-extended}Let $r$ and $L$ be the constants considered in Lemma
\ref{lem:Wu-bound-CAP}. Let $M\in\mathbb{R}$, $M>0$. Consider cones in the
extended phase space defined as
\[
\tilde{Q}_{0}^{+}\left(  v\right)  =\left\{  v+\left(  t,tx_{2},tx_{3}%
,tx_{4},t\theta\right)  :x_{k}\in\left[  -L,L\right]  \text{ for
}k=2,3,4,\text{ }\theta\in\left[  -M,M\right]  \text{ and }t\in\mathbb{R}%
\right\}  .
\]
If for every $\lambda\in\mathbb{T}$ the unstable eigenvector of $D\tilde
{F}\left(  0,\lambda\right)  $ is contained in $\tilde{Q}_{0}^{+}\left(
0\right)  $ and if $\tilde{F}$ satisfies $\tilde{Q}_{0}^{+}$ cone conditions
on $\left[  -r,r\right]  ^{4}\times\mathbb{T}$, then for every $\lambda
\in\mathbb{T}$ the unstable fiber $W_{\left(  0,\lambda\right)  }^{u}(\tilde{F})$ is
parameterised by $\tilde{p}_{\lambda}^{\mathrm{u}}:\left[  -r,r\right]
\rightarrow\mathbb{R}^{4}\times\mathbb{T}$ satisfying (below $p^{\mathrm{u}}$
is the function established in Lemma \ref{lem:Wu-bound-CAP})
\begin{equation}
\pi_{\mathbb{R}^{4}}\tilde{p}_{\lambda}^{\mathrm{u}}\left(  u\right)
=p^{\mathrm{u}}\left(  u\right)  \qquad\text{and \qquad}\tilde{p}_{\lambda
}^{\mathrm{u}}\left(  u\right)  \in\tilde{Q}_{0}^{+}\left(  0,\lambda
\right)  \qquad \mbox{for } u\in[-r,r]. \label{eq:fibers-extended-1}%
\end{equation}
In particular%
\begin{equation}
\left\vert \pi_{\theta}\tilde{p}_{\lambda}^{\mathrm{u}}\left(  u\right)
-\lambda\right\vert \leq rM. \label{eq:fibers-extended-2}%
\end{equation}
For the family of Lyapunov orbits with $\mathcal{X}$ from the interval (\ref{eq:X-interval}) we can take $M=3.$
\end{lemma}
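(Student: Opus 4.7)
The plan is to exploit the fact that the PCR3BP is autonomous. In the extended phase space the flow acts as $\tilde\Phi_t(x,\theta)=(\Phi_t(x),\theta+t)$, so the Poincar\'e map takes the skew-product form $\tilde{\mathcal P}(x,\theta)=(\mathcal P(x),\theta+\rho(x))$ and hence
\[
\tilde F(x,\theta)=(F(x),\theta+\tau(x))
\]
for a smooth return-time function $\tau:\mathbb R^4\to\mathbb R$ with $\tau(0)=T$. In particular $D\tilde F(0,\lambda)$ is block lower triangular with $DF(0)$ and the scalar $1$ on the diagonal and $\partial_x\tau(0)$ off-diagonal, so the $\theta$-direction is central and the hyperbolic splitting of $F$ lifts with one extra central coordinate. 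This structure drives the whole argument.

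Using skew-product form, the $\mathbb R^4$-component of any point on $W^u_{(0,\lambda)}(\tilde F)$ must satisfy the same backward-contraction estimates as for $F$ alone, and hence by Lemma \ref{lem:Wu-bound-CAP} be of the form $p^{\mathrm u}(u)$ with $u\in[-r,r]$. For each such $u$ there is then a unique $\tilde\theta(u)\in\mathbb T$ for which $\tilde F^{-k}(p^{\mathrm u}(u),\tilde\theta(u))$ converges to the orbit $\theta\mapsto\theta-T$ on the invariant curve, namely
\[
\tilde\theta(u)=\lambda+\sum_{j=1}^{\infty}\bigl(\tau(F^{-j}(p^{\mathrm u}(u)))-T\bigr)\pmod{2\pi};
\]
the series converges absolutely because $\|F^{-j}(p^{\mathrm u}(u))\|\le r\,m^{-j}$ follows from the backward-contraction in cones of Lemma \ref{lem:wu-cones}. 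Setting $\tilde p_\lambda^{\mathrm u}(u):=(p^{\mathrm u}(u),\tilde\theta(u))$ gives the first claim of (\ref{eq:fibers-extended-1}).

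The cone inclusion $\tilde p_\lambda^{\mathrm u}(u)\in\tilde Q_0^+(0,\lambda)$ is then obtained by repeating the graph-transform proof of Lemma \ref{lem:wu-cones} in the extended phase space. The hypothesised containment of the unstable eigenvector of $D\tilde F(0,\lambda)$ in $\tilde Q_0^+(0)$ supplies the initial linear graph, and the hypothesised cone conditions on $[-r,r]^4\times\mathbb T$ ensure that every graph-transform iterate stays in $\tilde Q_0^+$, converging to a unique invariant graph that necessarily coincides with the fiber constructed above. Estimate (\ref{eq:fibers-extended-2}) is immediate from the shape of $\tilde Q_0^+$: a point of $\tilde Q_0^+(0,\lambda)$ whose $\pi_1$-coordinate equals $u$ has its $\theta$-coordinate within $M|u|\le Mr$ of $\lambda$.

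Finally, the assertion that $M=3$ is admissible along (\ref{eq:X-interval}) reduces to a rigorous numerical verification. The $\theta$-component of the unstable eigenvector equals $\partial_x\tau(0)\,v_x/(\lambda_u-1)$ for the unstable eigenpair $(\lambda_u,v_x)$ of $DF(0)$, and the extended cone condition reduces (via Remark \ref{rem:cone-propagation}) to an interval inclusion for $D\tilde F$ propagated block-by-block along the parallel-shooting structure of section \ref{sec:shooting}. I expect the main technical obstacle to be producing enclosures of $\partial_x\tau$ tight enough for both checks to pass simultaneously with $M=3$ over the full family; if they do not, one can either subdivide the shooting further or slightly enlarge $M$ without affecting subsequent arguments.
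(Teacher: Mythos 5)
Your proposal is correct and follows essentially the same route as the paper: autonomy of the PCR3BP (your skew-product form of $\tilde F$) identifies $\pi_{\mathbb{R}^4}$ of the fiber with $p^{\mathrm{u}}$, the eigenvector hypothesis seeds the cone containment which the cone conditions then propagate, the bound $rM$ is read off the shape of $\tilde Q_0^+$, and $M=3$ is deferred to a rigorous numerical check. The only differences are cosmetic: you construct $\pi_{\theta}\tilde p_{\lambda}^{\mathrm{u}}(u)$ explicitly as a convergent series and phrase the cone containment as a graph transform along the orbit of base points, whereas the paper argues more directly that every fiber point is a high forward iterate of a point near the invariant circle (where tangency to the eigenvector places it inside the cone) and hence inherits cone membership by forward propagation.
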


\begin{proof}
For every $\lambda$ the unstable fiber $W_{\left(  0,\lambda\right)  }^{u}(\tilde{F})$ is
tangent at $\left(  0,\lambda\right)  $ to the eigenvector of $D\tilde
{F}\left(  0,\lambda\right)  .$ This implies that sufficiently close to
$\left(  0,\lambda\right)  $ this fiber is
in $\tilde{Q}_{0}^{+}\left(  0,\lambda\right)  $. Every point $x\in W_{\left(
0,\lambda\right)  }^{u}(\tilde{F})$ can be expressed as $x=\tilde{F}^{m}\left(  z\right)
$ for an arbitrary $m\in\mathbb{N}$ , and for some appropriate point $z=z(m)\in
W_{\tilde{F}^{-m}(0,\lambda)}^{u}(\tilde{F})$. By taking sufficiently large $m$ the point $z$ can
be chosen as close to $\left\{  0\right\}  \times\mathbb{T}$ as we want, which
means that we can choose $m$ large enough so that $z\in\tilde{Q}_{0}%
^{+}\left(  \tilde{F}^{-m}(0,\lambda)\right)  .$ Since $z\in\tilde{Q}_{0}^{+}\left(
\tilde{F}^{-m}(0,\lambda)\right)  $, by the fact that $\tilde{F}$
satisfies cone condition we obtain that $x=\tilde{F}%
^{m}\left(  z\right)  \in\tilde{Q}_{0}^{+}\left(  0,\lambda\right)  .$ This
means that for $W_{\left(  0,\lambda\right)  }^{u}(\tilde{F})\cap (  \left[
-r,r\right]  ^{4}\times\mathbb{T} )  \subset\tilde{Q}_{0}^{+}\left(
0,\lambda\right)  $. Since the PCR3BP is autonomous, $\pi_{\mathbb{R}^{4}%
}W_{\left(  0,\lambda\right)  }^{u}(\tilde{F})$ does not depend on $\lambda$, and is
parameterised by $p^{\mathrm{u}}\left(  u\right)  $ from Lemma
\ref{lem:Wu-bound-CAP}. We have thus established (\ref{eq:fibers-extended-1}).

The condition (\ref{eq:fibers-extended-2}) follows from the fact that
$\tilde{p}_{\lambda}^{\mathrm{u}}\left(  u\right)  \in\tilde{Q}_{0}^{+}\left(
0,\lambda\right)  $ by computing
\[
\left\vert \pi_{\mathbb{T}}\tilde{p}_{\lambda}^{\mathrm{u}}\left(  u\right)
-\lambda\right\vert \leq M\left\vert \pi_{x_{1}}\tilde{p}_{\lambda
}^{\mathrm{u}}\left(  u\right)  \right\vert =M\left\vert \pi_{x_{1}%
}p^{\mathrm{u}}\left(  u\right)  \right\vert =M\left\vert u\right\vert \leq
rM.
\]

With computer assistance, we have validated that for every $\mathcal{X}$ from the
interval (\ref{eq:X-interval}), the unstable eigenvector of
$D\tilde{F}\left(  0,\lambda\right)  $ is contained in
\[
\{1\}\times\lbrack-3\cdot10^{-6},3\cdot10^{-6}]^{3}\times\lbrack
2.7763408157, 2.7918430312]\subset\tilde{Q}_{0}^{+}(0).
\]
We have also validated the cone conditions using the method described in
Remark \ref{rem:cone-propagation}.
\end{proof}

\begin{remark}
By choosing $\tilde{A}_{0}$ more carefully, instead of just adding $1$ in the
lower diagonal term in (\ref{eq:A0-tilde}), we could reduce the constant $M$
in Lemma \ref{lem:fiber-extended}. We have opted for (\ref{eq:A0-tilde}) for
the sake of simplicity, since the established $M$ is good enough for the proof
of the main result.
\end{remark}


\subsection{Intersection of stable/unstable manifolds of Lyapunov
orbits\label{sec:inter}}

The way we establish intersection of the stable and unstable manifolds of
Lyapunov orbits is similar to the method from section \ref{sec:Lyap}. We use
the parallel shooting from section \ref{sec:shooting} combined with the bounds
on the unstable manifold established in section \ref{sec:local-unstable}. We
fix $\mathcal{X}$ from (\ref{eq:X-interval}) and consider $p:\mathbb{R}%
\rightarrow\mathbb{R}^{4}$ for the shooting operator (\ref{eq:F-for-shooting})
to be
\begin{equation}
p\left(  s\right)  =x_{0}(\mathcal{X})+A_{0}p^{\mathrm{u}}\left(  s\right)
,\label{eq:p-s-curve}%
\end{equation}
where $p^{\mathrm{u}}$ is the function from which parameterises the
intersection of the unstable manifold of $L_{\mathcal{X}}$ with $\left\{
Y=0\right\}  $, whose bounds we have obtained in Lemma \ref{lem:Wu-bound-CAP}.
(The choice of $A_{0}$ and the bounds on $p^{\mathrm{u}}\left(  s\right)  ,$
together with its derivative are written out in Lemma \ref{lem:Wu-bound-CAP}).

\begin{table}
\begin{center}
{\scriptsize 
\begin{tabular}{l*{4}{l}}
\normalsize{$n$} & \normalsize{$X$} & \normalsize{$Y$} & \normalsize{$P_X$} & \normalsize{$P_Y$} \\
\hline \\
0 & -0.95000000242 & -0 & -1.0427994645e-08 & -0.84134724275 \\ 
1 & -0.94997599415 & 0.032508255048 & -0.026407880234 & -0.87841641186 \\ 
2 & -0.94367569216 & 0.046559121408 & -0.016859552423 & -0.93400981538 \\ 
3 & -0.93185570859 & 0.039269494574 & -0.0043284590814 & -0.98260192685 \\ 
4 & -0.9227740512 & 0.014134265524 & 0.00018569728993 & -1.0132585163 \\ 
5 & -0.92342538156 & -0.017741502784 & -8.9504023812e-05 & -1.0111198858 \\ 
6 & -0.9332873018 & -0.041191398464 & 0.0054636769818 & -0.97749478375 \\ 
7 & -0.94483412083 & -0.046015557539 & 0.018549243384 & -0.92767343656 \\ 
8 & -0.95017205833 & -0.029473079334 & 0.025907589742 & -0.87187283933 \\ 
9 & -0.95002477482 & 0.0043928225728 & -0.0053602054156 & -0.84205223945 \\ 
10 & -0.94968890529 & 0.035271238551 & -0.026402429149 & -0.88508159957 \\ 
11 & -0.94246936536 & 0.046807227358 & -0.015244331177 & -0.94027843983 \\ 
12 & -0.93053821073 & 0.037102593271 & -0.003488949681 & -0.9875100719 \\ 
13 & -0.92242610924 & 0.010464221796 & -0.00019611370513 & -1.0149474376 \\ 
14 & -0.92456420774 & -0.021092391132 & -0.00083603746 & -1.0083700678 \\ 
15 & -0.93548053074 & -0.042380742641 & 0.0047663014948 & -0.97130987256 \\ 
16 & -0.94730307615 & -0.043772642856 & 0.016236744567 & -0.91841934035 \\ 
17 & -0.95320906454 & -0.022318932226 & 0.012564889629 & -0.85729285711 \\ 
18 & -0.96070049024 & 0.017216683341 & -0.061196504134 & -0.84499124342 \\ 
19 & -0.98081741509 & 0.044653406262 & -0.11087155568 & -0.94184521723 \\ 
20 & -1.0057774379 & 0.042684693077 & -0.12465612254 & -1.0610257866 \\ 
21 & -1.0281206713 & 0 & 0 & -1.2112777154 \\ 
\end{tabular}
}\end{center}
\caption{Midpoints of our enclosure o homoclinic orbits for our family of Lyapunov orbits.\label{tabl:homoclinic}}
\end{table}

By the $\mathcal{R}$-symmetry of the PCR3BP we know that $\mathcal{R}\left(
p\left(  s\right)  \right)  $ is a point on the stable manifold of
$L_{\mathcal{X}}$. If for $F$ defined by (\ref{eq:F-for-shooting}) we validate
that for some $s\in\left(  0,1\right)  $ and $h\in\mathbb{R}$ we have
\[
F\left(  s,h,x_{1},\ldots,x_{n}\right)  =0,
\]
then taking $x_{0}:=p\left(  s\right)  $ and $x_{n+1}:=\Phi_{h}\left(
x_{n}\right)  $ we obtain a sequence of points%
\begin{equation}
x_{0},\ldots,x_{n},x_{n+1},\mathcal{R}\left(  x_{n}\right)  ,\ldots
,\mathcal{R}\left(  x_{0}\right),  \label{eq:points-along-homoclinic}%
\end{equation}
along an intersection of the stable and unstable manifolds of $L_{\mathcal{X}%
}$. With this method we have managed to obtain the following result.

\begin{lemma}
\label{lem:homoclinic-bound}For every $\mathcal{X}$ from (\ref{eq:X-interval})
the stable and unstable manifolds of $L_{\mathcal{X}}$ intersect. Moreover,
the intersection is along a $\mathcal{R}$-symmetric homoclinic orbit, which
contains a sequence of points along it that lies $r=1.96\cdot10^{-7}$ close
to the orbit written out in Table \ref{tabl:homoclinic}; see also Figure \ref{fig:homoclinic}.
\end{lemma}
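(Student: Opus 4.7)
The plan is to implement the shooting strategy outlined in the paragraph preceding the lemma, treating $\mathcal{X}$ as an interval parameter so that a single rigorous Krawczyk validation covers the whole family (\ref{eq:X-interval}). First I would form the starting curve $p(s) = x_0(\mathcal{X}) + A_0 p^{\mathrm{u}}(s)$, using the rigorous enclosure of $p^{\mathrm{u}}$ from Lemma \ref{lem:Wu-bound-CAP} (together with Corollary \ref{cor:ps-der}) to bound both $p(s)$ and its derivative $\frac{d}{ds} p(s) \in A_0 V_0$. Combined with the enclosure of $x_0(\mathcal{X})$ from Lemma \ref{lem:Lyap-enclosure}, this yields a $C^1$ interval bound on $p$ that is uniform in $\mathcal{X}$ over (\ref{eq:X-interval}) and in $s$ over a small neighbourhood of the numerical guess.

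Second I would plug $p$ into the shooting operator $F$ from (\ref{eq:F-for-shooting}) with enough intermediate nodes (about twenty, matching Table \ref{tabl:homoclinic}), so that each segment $\Phi_\tau(x_i) - x_{i+1}$ involves only a short integration; this is essential to keep the interval blow-up under control. Running the non-rigorous Newton iteration (\ref{eq:Newton-initial}) from a numerical guess of a symmetric homoclinic connection produces a candidate $\mathbf{x} = (s,\tau,x_1,\ldots,x_n)$ whose entries agree with Table \ref{tabl:homoclinic}.

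Third I would form the interval box $\mathbf{X}$ of radius $r = 1.96 \cdot 10^{-7}$ around this approximation, evaluate $[DF(\mathbf{X})]$ via the variational integrator in CAPD, and apply the Krawczyk test of Theorem \ref{th:Krawczyk} with $C$ equal to the numerically computed inverse of $DF(\mathbf{x})$. A successful containment $K(\mathbf{x},\mathbf{X},F) \subset \mathrm{int}\,\mathbf{X}$ produces, by Theorem \ref{th:Krawczyk}, a unique zero $\mathbf{x}^{\ast} \in \mathbf{X}$ of $F$, and this conclusion holds simultaneously for every $\mathcal{X}$ in (\ref{eq:X-interval}) because $\mathcal{X}$ has been carried as an interval throughout. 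By the $\mathcal{R}$-symmetry argument recalled just before the lemma and the completion (\ref{eq:points-along-homoclinic}), the resulting orbit closes into the $\mathcal{R}$-symmetric homoclinic trajectory with the claimed enclosure.

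The main obstacle I expect is keeping the enclosure tight enough for the Krawczyk containment to succeed. The unstable fiber of $L_{\mathcal{X}}$ is captured with radius only $3 \cdot 10^{-8}$ in Lemma \ref{lem:Wu-bound-CAP}, so already the initial data $p(s)$ is close to the smallest scale on which interval arithmetic can reliably operate; the subsequent integration of $\Phi_\tau$ over the time required for the orbit to reach the symmetric midpoint on $\{Y = 0\}$ can easily inflate this well beyond the target $r = 1.96 \cdot 10^{-7}$. Controlling the inflation is precisely why the parallel shooting is used, together with local coordinate changes on each segment analogous to (\ref{eq:A0-choice}) that keep the derivative blocks of $DF$ well-conditioned and hence $Id - C[DF(\mathbf{X})]$ small enough in norm; the remaining work is routine interval-arithmetic bookkeeping.
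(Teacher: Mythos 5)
Your proposal follows essentially the same route as the paper: seed the shooting operator (\ref{eq:F-for-shooting}) with $p(s)=x_{0}(\mathcal{X})+A_{0}p^{\mathrm{u}}(s)$ from Lemma \ref{lem:Wu-bound-CAP}, carry $\mathcal{X}$ as an interval, validate the zero of $F$ with the Krawczyk method, and close the orbit by the $\mathcal{R}$-symmetry completion (\ref{eq:points-along-homoclinic}), noting that $\mathcal{R}(p(s))$ lies on the stable manifold. This matches the paper's construction, including the rationale for parallel shooting to control interval inflation.
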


\begin{remark}
In Table \ref{tabl:homoclinic} we write out half of the points along the
homoclinic, since the second half (the remaining twenty one points, to be
precise) follows from the $\mathcal{R}$-symmetry.
\end{remark}

\begin{remark}
Our estimate on the distance from the points from Table \ref{tabl:homoclinic}
obtained by our computer program varies from point to point and is frequently
more accurate than stated in Lemma \ref{lem:homoclinic-bound}, where we simply
write out the single $r$, which is the upper bound that can be applied to all
the points.
\end{remark}

\begin{remark}
\label{rem:time-between-points} From the method we also obtain a bound on the
integration time $h$ between the consecutive points along the homoclinic. We
have obtained that
\begin{equation}
h\in[0.34246881126, 0.34246888642].\label{eq:h-bound}%
\end{equation}

\end{remark}

\begin{figure}
\begin{center}
\includegraphics[height=4cm]{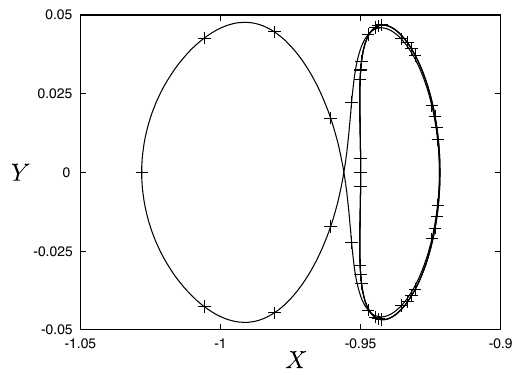}
\end{center}
\caption{A homoclinic orbit to $L_{\mathcal{X}}.$\label{fig:homoclinic}}
\end{figure}

We now show that the established intersection is transversal.

\begin{lemma}
\label{tem:inter-transversal} For every $\mathcal{X}$ from the interval
(\ref{eq:X-interval}) the intersection of the stable and unstable manifolds of
$L_{\mathcal{X}}$ is transversal\correction{comment 28}{} when considered in the three dimensional
constant energy level.
\end{lemma}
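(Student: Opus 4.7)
I verify transversality at the $\mathcal{R}$-symmetric homoclinic point $x_{n+1}$ constructed in Lemma~\ref{lem:homoclinic-bound}: this point lies on $\Sigma=\{Y=0\}$ and satisfies $\pi_{P_X} x_{n+1}=0$, hence $\mathcal{R}(x_{n+1})=x_{n+1}$. By the reversing symmetry $\mathcal{R}\circ\Phi_t = \Phi_{-t}\circ\mathcal{R}$ the stable manifold is the $\mathcal{R}$-image of the unstable one, so it suffices to compare a transverse tangent direction in $T_{x_{n+1}} W^{\mathrm{u}}_{L_{\mathcal{X}}}$ with its $D\mathcal{R}$-image.

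I start from the tangent vector $v_0 = A_0\,(p^{\mathrm u})'(s) \in A_0 V_0$ to the curve $W^{\mathrm u}_{L_{\mathcal{X}}}\cap\Sigma$ at the base point $p(s)=x_0+A_0\,p^{\mathrm u}(s)$, whose interval enclosure is supplied by Corollary~\ref{cor:ps-der}. I transport $v_0$ forward along the same sequence of shooting arcs used in Lemma~\ref{lem:homoclinic-bound}, using CAPD's rigorous variational integrator, to obtain an interval enclosure of
\[
v^u := D\Phi_{\tau_{\mathrm{tot}}}(p(s))\cdot v_0 \in T_{x_{n+1}} W^{\mathrm u}_{L_{\mathcal{X}}}, \qquad \tau_{\mathrm{tot}}=(n+1)h.
\]
Then $v^s := D\mathcal{R}\cdot v^u$, with $D\mathcal{R}=\mathrm{diag}(1,-1,-1,1)$, is a corresponding transverse tangent direction in $T_{x_{n+1}} W^{\mathrm s}_{L_{\mathcal{X}}}$.

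Since both $T_{x_{n+1}} W^{\mathrm u}$ and $T_{x_{n+1}} W^{\mathrm s}$ contain the common flow direction $f(x_{n+1})$, transversality inside the $3$-dimensional energy level amounts to $\{f(x_{n+1}), v^u, v^s\}$ spanning a $3$-dimensional subspace of $T_{x_{n+1}}\mathbb{R}^4$. A direct computation from the Hamilton equations shows that at an $\mathcal{R}$-fixed point on $\Sigma$ the vector field satisfies $\pi_X f = \pi_{P_Y} f = 0$. Writing $v^u=(a,b,c,d)$, the only non-vanishing $3\times 3$ minors of $[f(x_{n+1})\mid v^u \mid v^s]$ are $2a\bigl(c\,\pi_Y f - b\,\pi_{P_X} f\bigr)$ and $2d\bigl(c\,\pi_Y f - b\,\pi_{P_X} f\bigr)$. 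Thus the transversality test reduces, modulo the mild condition $a\neq 0$ or $d\neq 0$, to verifying with interval arithmetic that the scalar $c\,\pi_Y f(x_{n+1}) - b\,\pi_{P_X} f(x_{n+1})$ has an interval enclosure bounded away from $0$.

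\textbf{Main obstacle.} The delicate part is keeping the interval enclosure of $v^u$ from ballooning across the hyperbolic integration: its norm grows roughly like $\mu^{-\tau_{\mathrm{tot}}}$ and naive propagation would inflate the error to a useless size. This is handled by the same short-arc shooting architecture already built in Section~\ref{sec:shooting}, using the local frame changes $A_i$ from (\ref{eq:fi-def}) to re-coordinatise between arcs so that the propagated derivative stays close to diagonal, and by the fact that the integration time along the homoclinic is modest.
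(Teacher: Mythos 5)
Your proposal is correct and takes essentially the same route as the paper: both arguments exploit the $\mathcal{R}$-symmetry at the self-symmetric homoclinic point $x_{n+1}$, rigorously propagate the unstable tangent enclosure of Corollary \ref{cor:ps-der} along the shooting arcs, and reduce transversality in the energy level to an interval-arithmetic check of one scalar, with the vector field supplying the $Y$-direction. The paper's final reduction is phrased differently — it parameterises $\Sigma_{\{X<-1\}}\cap\{H=h^*\}$ by $(X,P_X)$ and verifies that both components of $\pi_{X,P_X}\frac{d}{ds}\mathcal{P}\circ p(s)|_{s=s^*}$ are positive, so that the sign flip of $P_X$ under $\mathcal{R}$ forces independence — but after quotienting by the flow direction this is exactly your condition $a\,(c\,\pi_Y f - b\,\pi_{P_X} f)\neq 0$, so the two checks are equivalent.
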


\begin{proof}
We consider $\Sigma_{\{X<-1\}}=\left\{  Y=0,X<-1\right\}  \subset
\mathbb{R}^{4}$ and we will study the intersections of the stable/unstable
manifolds of $L_{\mathcal{X}}$ on this section. (See Figure \ref{fig:homoclinic}.)

Let us denote by $W_{L_{\mathcal{X}}}^{u}$ and $W_{L_{\mathcal{X}}}^{s}$ the
unstable and stable manifolds of $L_{\mathcal{X}},$ respectively. These are
two dimensional tubes, contained in the three dimensional constant energy
level $\left\{  H=h^{\ast}\right\}  $ for $h^{\ast}=H\left(  L_{\mathcal{X}%
}\right)  $. We have established that $W_{L_{\mathcal{X}}}^{u}$ and
$W_{L_{\mathcal{X}}}^{s}$ intersect along a homoclinic orbit, which passes
through the points (\ref{eq:points-along-homoclinic}). The point $x_{n+1}$
belongs to $\Sigma_{\{X<-1\}}$. (From (\ref{eq:F-for-shooting}) we know that
$\pi_{Y}x_{n+1}=0$ and from Table \ref{tabl:homoclinic} we see that $\pi
_{X}x_{n+1}<-1$, so $x_{n+1}\in\Sigma_{\{X<-1\}}$.) The vector field at
$x_{n+1}$ has a non zero $Y$-component. This means that the tangent spaces to
the stable and unstable manifolds at $x_{n+1}$ span the coordinate $Y$.

The manifolds $W_{L_{\mathcal{X}}}^{u}$ and $W_{L_{\mathcal{X}}}^{s}$
intersect with $\Sigma_{\{X<-1\}}$ at $x_{n+1}$ along one dimensional curves.
(Note that some of the points from the unstable/stable manifolds can collide
with Jupiter. Those that reach $\Sigma_{\{X<-1\}}$ close to $x_{n+1}$
intersect the section along one dimensional curves.) The section
$\Sigma_{\{X<-1\}}$ is three dimensional, but $L_{\mathcal{X}},$
$W_{L_{\mathcal{X}}}^{u}$ and $W_{L_{\mathcal{X}}}^{s}$ are contained in
$\left\{  H=h^{\ast}\right\}  $. The set $\Sigma_{\{X<-1\}}\cap\left\{
H=h^{\ast}\right\}  $ is two dimensional and can be parameterised\footnote{On
$\Sigma_{\left\{  X<-1\right\}  }$ the coordinate $P_{Y}$ can be computed from $X,P_X$ since 
$H(X,Y=0,P_X,P_Y)=h^*  $.} by coordinates $(X,P_{X})$. The
$W_{L_{\mathcal{X}}}^{u}\cap\Sigma_{\{X<-1\}}$ and $W_{L_{\mathcal{X}}}%
^{s}\cap\Sigma_{\{X<-1\}}$ are therefore one dimensional curves contained in a
two dimensional space, parameterised by $(X,P_{X})$, and if we show that 
\begin{equation}
\pi_{X,P_{X}}\left(  W_{L_{\mathcal{X}}}^{u}\cap\Sigma_{\{X<-1\}}\right)  \text{\quad
intersect transversally with\quad}\pi_{X,P_{X}}\left(  W_{L_{\mathcal{X}}}%
^{s}\cap\Sigma_{\{X<-1\}}\right)  \text{\quad at }x_{n+1}%
,\label{eq:transversal-curves}%
\end{equation}
then we will obtain transversal intersections of $W_{L_{\mathcal{X}}}^{u}$
with $W_{L_{\mathcal{X}}}^{s}$ at $x_{n+1}$ in $\left\{  H=h^{\ast}\right\}
.$ (In more detail: the vector field at $x_{n+1}$ is tangent to
$W_{L_{\mathcal{X}}}^{u}$ and $W_{L_{\mathcal{X}}}^{s}$ at $x_{n+1}$ and has a
non zero $Y$-component; from (\ref{eq:transversal-curves}) we will have that
the tangent spaces to $W_{L_{\mathcal{X}}}^{u}$ and $W_{L_{\mathcal{X}}}^{s}$
at $x_{n+1}$ span $X,P_{X}$. In all we span a three dimensional vector space,
hence the intersection is transversal in $\left\{  H=h^{\ast}\right\}  .$)

Let $\tau:\mathbb{R}^{4}\rightarrow\mathbb{R}$, $\mathcal{P}:\mathbb{R}%
^{4}\rightarrow\Sigma_{\{X<-1\}}$ be defined as
\begin{align*}
\tau\left(  x\right)   &  :=\inf\left\{  t>0:\Phi_{t}\left(  x\right)
\in\Sigma_{\left\{  X<-1\right\}  }\right\}  ,\\
\mathcal{P}\left(  x\right)   &  :=\Phi_{\tau\left(  x\right)  }\left(
x\right)  .
\end{align*}
The curve $W_{L_{\mathcal{X}}}^{u}\cap\Sigma_{\{X<-1\}}$ can be obtained by
computing $\mathcal{P}\circ p\left(  s\right)  $. (See (\ref{eq:p-s-curve})
for the definition of $p\left(  s\right)  .$) By the $\mathcal{R}$-symmetry of
the PCR3BP the $W_{L_{\mathcal{X}}}^{s}\cap\Sigma_{\{X<-1\}}$ is equal to
$\mathcal{R}\circ\mathcal{P}\circ p\left(  s\right)  $. Let $s^{\ast}%
\in\mathbb{R}$ be such that $\mathcal{P}\circ p\left(  s^{\ast}\right)
=x_{n+1}$. If we establish that
\begin{equation}
\pi_{X}\frac{d}{ds}\mathcal{P}\circ p\left(  s\right)  |_{s=s^{\ast}}%
>0\qquad\text{and\qquad}\pi_{P_{X}}\frac{d}{ds}\mathcal{P}\circ p\left(
s\right)  |_{s=s^{\ast}}>0,\label{eq:trans-cond-1}%
\end{equation}
then%
\begin{align}
\pi_{X}\frac{d}{ds}\mathcal{R}\circ\mathcal{P}\circ p\left(  s\right)
|_{s=s^{\ast}} &  =\pi_{X}\frac{d}{ds}\mathcal{P}\circ p\left(  s\right)
|_{s=s^{\ast}}>0,\label{eq:trans-cond-2}\\
\pi_{P_{X}}\frac{d}{ds}\mathcal{R}\circ\mathcal{P}\circ p\left(  s\right)
|_{s=s^{\ast}} &  =-\pi_{P_{X}}\frac{d}{ds}\mathcal{P}\circ p\left(  s\right)
|_{s=s^{\ast}}<0,\label{eq:trans-cond-3}%
\end{align}
and from (\ref{eq:trans-cond-1}--\ref{eq:trans-cond-3}) we will obtain
(\ref{eq:transversal-curves}).

In section \ref{sec:local-unstable} we have established that (see Lemma
\ref{lem:Wu-bound-CAP} and Corollary \ref{cor:ps-der})
\begin{equation}
\frac{d}{ds}p\left(  s\right)  \in\tilde{A}_{0}V_{0}, \label{eq:ps-cone-bound}%
\end{equation}
where $V_{0}$ is a set%
\[
V_{0}=\left\{  1\right\}  \times\left[  -L,L\right]  \times\left[
-L,L\right]  \times\left[  -L,L\right]  ,
\]
with $L=5\cdot10^{-6}$. The set $V_{0}$ represents a cone, as described in
Remark \ref{rem:cone-rep}. We can propagate the bound (\ref{eq:ps-cone-bound})
to the point $x_{n+1}$ using cone propagation method described in Remark
\ref{rem:cone-propagation}. We have thus validated that%
\[
\pi_{X,P_{X}}\frac{d}{ds}\mathcal{P}\circ p\left(  s\right)  |_{s=s^{\ast}}%
\in\left\{  uV\,|\,u>0\text{ and }V=\{1\}\times\lbrack
4.06081, 4.06404]\right\}  .
\]
This establish (\ref{eq:trans-cond-1}) and finishes our proof.
\end{proof}


\subsection{Persistence of the family of Lyapunov
orbits\label{sec:persistence}}

Recall that a Lyapunov orbit starting from $x_{0}(\mathcal{X})$ (see
(\ref{eq:x0-formula})), which has a period $T(\mathcal{X})$, is given as
\[
L_{\mathcal{X}}=\left\{  \Phi_{t}\left(  x_{0}(\mathcal{X})\right)
:t\in\left[  0,T(\mathcal{X})\right]  \right\}  .
\]
Let us denote the normally hyperbolic invariant manifold consisting of the
family of Lyapunov orbits as
\[
\Lambda_{L}=\{L_{\mathcal{X}}:\mathcal{X}%
\mbox{ are from the interval (\ref{eq:X-interval})}\}\subset\mathbb{R}^{4}.
\]
We shall also use the following notation for the manifold in the extended
phase space:
\begin{equation}
\tilde{\Lambda}_{L}=\Lambda_{L}\times\mathbb{T}.\label{eq:nhim-lyap}%
\end{equation}

To prove persistence of $\tilde{\Lambda}_{L}$ we shall use the following theorem:

\begin{theorem}
\cite{MR3604613}\label{th:persistence} Assume that
\begin{equation}
\frac{d}{d\mathcal{X}}T(\mathcal{X})\neq0\label{eq:dTdX}%
\end{equation}
and also
\begin{equation}
\frac{d}{d\mathcal{X}}H_0(x_{0}(\mathcal{X}))\neq0.\label{eq:dHdX}%
\end{equation}
Then for sufficiently small perturbation $\varepsilon$ from the PCR3BP to the
PER3BP, the manifold $\tilde{\Lambda}_{L}$ is perturbed into a $O\left(
\varepsilon\right)  $ close normally hyperbolic manifold $\tilde{\Lambda}%
_{L}^{\varepsilon}$, with boundary, which is invariant under the flow induced
by (\ref{eq:PER3BP-ode}). Moreover, there exists a Cantor set of invariant
tori in $\tilde{\Lambda}_{L}^{\varepsilon}$.\correction{comment 29}{}
\end{theorem}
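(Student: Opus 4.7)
The plan is to split the statement into two pieces and attack them separately: first the existence of the perturbed normally hyperbolic manifold $\tilde{\Lambda}_L^{\varepsilon}$, and second the presence of a Cantor set of KAM tori inside it.

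For the persistence part, I would appeal to the Hirsch--Pugh--Shub / Fenichel theory of normally hyperbolic invariant manifolds. The unperturbed $\tilde{\Lambda}_L$ is a three-dimensional manifold (two cylindrical directions spanned by the Lyapunov phase and the angle $\theta$, plus the parameter $\mathcal{X}$), foliated by invariant $2$-tori $L_{\mathcal{X}}\times\mathbb{T}$ for the flow in the extended phase space, and each Lyapunov orbit is hyperbolic with uniform expansion/contraction rates (this has already been used implicitly in Section \ref{sec:local-unstable}, via the eigenvalue $\lambda$ from Lemma \ref{lem:wu-cones}). Thus the rate conditions (\ref{eq:rate-cond-nhim1})--(\ref{eq:rate-cond-nhim3}) hold with rates bounded away from $1$. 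Since $\tilde{\Lambda}_L$ has a boundary, I would invoke the \emph{overflowing} version of the NHIM persistence theorem: extend the vector field and the parameter interval slightly beyond (\ref{eq:X-interval}), obtain a larger normally hyperbolic manifold on which the flow is properly overflowing, and then restrict back. The persistence theorem then produces, for $\varepsilon$ small, a smooth manifold $\tilde{\Lambda}_L^{\varepsilon}$ which is $O(\varepsilon)$-close to $\tilde{\Lambda}_L$ and invariant under $\Phi_t^{\varepsilon}$ in the overflowing sense. Because the PER3BP is Hamiltonian and the standard symplectic form restricts nondegenerately to $\tilde{\Lambda}_L$, the perturbed $\tilde{\Lambda}_L^{\varepsilon}$ can be taken symplectic as well.

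For the KAM part, I would pass to the Poincar\'e return map $f_{\varepsilon}$ to the section $\{\theta=0\}$. On the unperturbed slice $\tilde{\Lambda}_L\cap\{\theta=0\}$, which is a two-dimensional cylinder parameterised by $\mathcal{X}$ and the Lyapunov phase, the map $f_0$ is an integrable twist map: along each invariant circle labelled by $\mathcal{X}$ the dynamics is a rigid rotation with rotation number $\rho(\mathcal{X})=T(\mathcal{X})/(2\pi)\bmod 1$, and the natural action is $I(\mathcal{X})=H(x_0(\mathcal{X}))$. The twist condition $d\rho/dI\neq 0$ becomes
\[
\frac{d\rho}{d\mathcal{X}}\,\Big/\,\frac{dI}{d\mathcal{X}}\neq 0,
\]
which is exactly guaranteed by the hypotheses (\ref{eq:dTdX}) and (\ref{eq:dHdX}). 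Since $f_{\varepsilon}|_{\tilde{\Lambda}_L^{\varepsilon}\cap\{\theta=0\}}$ is an exact-symplectic perturbation of the integrable twist map $f_0|_{\tilde{\Lambda}_L\cap\{\theta=0\}}$, Moser's twist theorem yields a Cantor set of invariant circles with Diophantine rotation numbers, which in turn correspond to a Cantor set of invariant $2$-tori in $\tilde{\Lambda}_L^{\varepsilon}$.

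The main obstacle, in my view, is not the NHIM persistence, which is standard once the hyperbolicity rates are under control, but the careful bookkeeping required to bring the two pieces together: one must verify that the restriction of the symplectic form to $\tilde{\Lambda}_L^{\varepsilon}$ is nondegenerate, introduce action-angle coordinates on the perturbed cylinder in a way that preserves the twist inherited from $f_0$, and confirm that the resulting perturbation of the integrable twist map is genuinely exact-symplectic and of class $C^r$ with $r$ large enough for the KAM theorem. Once this setup is in place, (\ref{eq:dTdX}) and (\ref{eq:dHdX}) play the roles of the standard Kolmogorov and isoenergetic nondegeneracy conditions, and the KAM conclusion follows directly.
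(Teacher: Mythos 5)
This theorem is not proved in the paper at all: it is imported verbatim from \cite{MR3604613}, and the paper's own contribution in Section \ref{sec:persistence} is only the computer-assisted verification of the hypotheses (\ref{eq:dTdX})--(\ref{eq:dHdX}) via the implicit function theorem (Lemma \ref{lem:persistence}). Your reconstruction follows the standard route that the cited reference takes --- Fenichel/HPS persistence (in its overflowing-invariant form to handle the boundary) for $\tilde{\Lambda}_L^{\varepsilon}$, followed by Moser's twist theorem on a two-dimensional invariant cylinder, with (\ref{eq:dTdX}) and (\ref{eq:dHdX}) combining into the twist condition $dT/dI\neq 0$ --- so in substance it is correct and matches the intended argument. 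Two small points to tighten: the symplectic form cannot ``restrict nondegenerately'' to $\tilde{\Lambda}_L$ itself, since that manifold is three-dimensional (Lyapunov phase, $\mathcal{X}$, and $\theta$); the relevant symplectic (indeed exact-symplectic, area-preserving) structure lives on the two-dimensional slice $\tilde{\Lambda}_L\cap\{\theta=\mathrm{const}\}$, which is where you in fact apply Moser's theorem, so the slip is harmless. Second, the smoothness of the restricted return map on $\tilde{\Lambda}_L^{\varepsilon}$ is limited by the spectral gap as in (\ref{eq:nhim-smoothness}); one should note that the tangential rates $\mu$ can be taken arbitrarily close to $1$ here (the inner dynamics is a family of rotations), so $l$ in (\ref{eq:nhim-smoothness}) can be made large enough for the KAM step --- this is exactly the ``bookkeeping'' you flag as the main obstacle, and it is indeed where the cited reference does the work.
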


We now discuss how we validate (\ref{eq:dTdX}--\ref{eq:dHdX}). 

Let us fix a
single $\mathcal{X}$ from (\ref{eq:x0-formula}). \cor{Let us first recall that }from section \ref{sec:Lyap}
we know that there exists a $\tau=\tau(\mathcal{X})$, and an even\footnote{In our case we take $N=30$, see Table \ref{tabl:Lyap}.}
$N\in\mathbb{N}$, such that for $\tau=\tau(\mathcal{X})$ and for%
\begin{align*}
x_{0} &  =x_{0}\left(  \mathcal{X}\right)  =\left(  \mathcal{X},0,0,s\left(
\mathcal{X}\right)  \right)  ,\\
x_{i+1} &  =\Phi_{\tau}\left(  x_{i}\right)  \qquad\text{for }i=0,\ldots,N-1
\end{align*}
we have%
\[
x_{N}=x_{0},
\]
and the period of the Lyapunov orbit starting from $x_{0}$ is $T(\mathcal{X}%
)=N\tau\left(  \mathcal{X}\right)  $.
\correction{comment 9}{The existence of such $s=s(\mathcal{X})$ and $ \tau=\tau(\mathcal{X})$
}
was established by fixing $\mathcal{X}$ and solving for $\tau$ and $s$
the following equation
\begin{equation}
\pi_{Y,P_{X}}\Phi_{\tau N/2}\left(  \mathcal{X},0,0,s\right)  =0. \label{eq:for-twist}
\end{equation}
(See (\ref{eq:F-for-shooting}) and (\ref{eq:ps-for-Lyap}); equation (\ref{eq:F-for-shooting}) provides us with the solution of (\ref{eq:for-twist}) using parallel shooting.)

\cor{With the aim of validating (\ref{eq:dTdX}--\ref{eq:dHdX}) }we can now define a function $g:\mathbb{R}^{3}\rightarrow\mathbb{R}^{2}$ as%
\[
g\left(  \mathcal{X},s,\tau\right)  =\pi_{Y,P_{X}}\Phi_{N\tau/2}\left(
\mathcal{X},0,0,s\right)
\]
and observe that
\begin{equation}
g\left(  \mathcal{X},s(\mathcal{X}),\tau(\mathcal{X})\right)
=0.\label{eq:g-zero}%
\end{equation}
This means that we can compute the derivatives of $\frac{ds}{d\mathcal{X}}$
and $\frac{d\tau}{d\mathcal{X}}$ from the implicit function theorem; i.e.
by differentiating (\ref{eq:g-zero}) with respect to $\mathcal{X}$ we obtain
\[
\frac{\partial g}{\partial\mathcal{X}}+\frac{\partial g}{\partial\left(
s,\tau\right)  }\left(
\begin{array}
[c]{c}%
\frac{ds}{d\mathcal{X}}\\
\frac{d\tau}{d\mathcal{X}}%
\end{array}
\right)  =0,
\]
and provided that $\frac{\partial g}{\partial\left(  s,\tau\right)  }$ is
invertible we see that%
\begin{equation}
\left(
\begin{array}
[c]{c}%
\frac{ds}{d\mathcal{X}}\\
\frac{d\tau}{d\mathcal{X}}%
\end{array}
\right)  =-\left(  \frac{\partial g}{\partial\left(  s,\tau\right)  }\right)
^{-1}\frac{\partial g}{\partial\mathcal{X}}.\label{eq:dg-0}%
\end{equation}

The partials $\frac{\partial g}{\partial\mathcal{X}},$ $\frac{\partial
g}{\partial s}$ and $\frac{\partial g}{\partial\tau}$ are $2\times1$ matrices,
which can be computed as follows. Let $e_{i}\in\mathbb{R}^{4}$, for
$i=1,\ldots,4$, be a vector with $1$ on $i$-th coordinate and zeros on the
remaining coordinates. Let $\mathcal{F}:\mathbb{R}^4\to \mathbb{R}^4$ stand for the vector field of the PCR3BP. Then%
\begin{align}
\frac{\partial g}{\partial\mathcal{X}}\left(  x_{0}\right)   &  =\pi_{Y,P_{X}%
}D\Phi_{N\tau/2}\left(  x_{0}\right)  e_{1}=\pi_{Y,P_{X}}D\Phi_{\tau}\left(
x_{N/2-1}\right)  \ldots D\Phi_{\tau}\left(  x_{0}\right)  e_{1}%
,\label{eq:dg-1}\\
\frac{\partial g}{\partial s}\left(  x_{0}\right)   &  =\pi_{Y,P_{X}}%
D\Phi_{N\tau/2}\left(  x_{0}\right)  e_{4}=\pi_{Y,P_{X}}D\Phi_{\tau}\left(
x_{N/2-1}\right)  \ldots D\Phi_{\tau}\left(  x_{0}\right)  e_{4},\\
\frac{\partial g}{\partial\tau}\left(  x_{0}\right)   &  
=\frac{d}{d\tau} \pi_{X,P_X}\Phi_{N\tau/2}(x_0)
=\frac{N}{2}%
\pi_{Y,P_{X}}\mathcal{F}\left(  \Phi_{N\tau/2}\left(  x_{0}\right)  \right)  =\frac
{N}{2}\pi_{Y,P_{X}}\mathcal{F}\left(  x_{N/2}\right)  .\label{eq:dg-3}%
\end{align}
Note that from the above $\frac{\partial g}{\partial s}$ and $\frac{\partial
g}{\partial\tau}$ we obtain $\frac{\partial g}{\partial\left(  s,\tau\right)
}=\left(
\begin{array}
[c]{cc}%
\frac{\partial g}{\partial s} & \frac{\partial g}{\partial\tau}%
\end{array}
\right)$, which we can use in (\ref{eq:dg-0}) to compute $\frac{ds}{d\mathcal{X}}$ and $\frac{d\tau}{d\mathcal{X}}$.

Once $\frac{ds}{d\mathcal{X}}$ is established, we can easily compute
\begin{equation}
\frac{dH_0}{d\mathcal{X}}(x_{0}(\mathcal{X}))=\frac{\partial H_0}{\partial
X}(x_{0}(\mathcal{X}))+\frac{\partial H_0}{\partial P_{Y}}%
(x_{0}(\mathcal{X}))\frac{ds}{d\mathcal{X}}(\mathcal{X}%
).\label{eq:dH-along-x0}%
\end{equation}
We have used (\ref{eq:dg-0}--\ref{eq:dH-along-x0}) to validate, with computer
assistance, that we have the following:

\begin{lemma}
\label{lem:persistence}For every $\mathcal{X}$ from (\ref{eq:X-interval}) we
have%
\begin{align*} 
\frac{d}{d\mathcal{X}}x_{0}\left(  \mathcal{X}\right)   &  \in\{1\}\times
\{0\}\times\{0\}\times\left[  -4.530367,-4.530349\right]  ,\\
\frac{d}{d\mathcal{X}}\tau(\mathcal{X}) &  \in\left[
-0.5523403,-0.5522624\right]  ,\\
\frac{d}{d\mathcal{X}}T(\mathcal{X}) &  \in\left[  -16.57021,-16.56787\right]
,\\
\frac{d}{d\mathcal{X}}H_0\left(  x_{0}\left(  \mathcal{X}\right)  \right)   &
\in\left[  -0.359187,-0.359185\right]  .
\end{align*}

\end{lemma}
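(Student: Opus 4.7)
My plan is to carry out, with computer assistance, exactly the scheme outlined in equations (\ref{eq:dg-0})--(\ref{eq:dH-along-x0}) preceding the lemma, using the enclosures already produced in Lemma \ref{lem:Lyap-enclosure}. The point is that once we have validated interval enclosures $[x_i]$ for the $N=30$ points along the orbit (together with $[\tau]$, $[s]$, and $[\mathcal{X}]$ taken over the whole interval (\ref{eq:X-interval})), all the quantities appearing in the lemma can be expressed as arithmetic combinations of enclosures that CAPD can compute directly, and the resulting bounds will automatically hold for every $\mathcal{X}$ in (\ref{eq:X-interval}).

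The concrete steps I would execute are as follows. First, I would propagate the variational equation of the PCR3BP through each of the short time steps, obtaining interval enclosures $[D\Phi_\tau(x_i)]$ for $i=0,\ldots,N/2-1$. Multiplying these $4\times 4$ interval matrices gives $[D\Phi_{N\tau/2}(x_0)]$, from which I extract $\pi_{Y,P_X}$ of the first and fourth columns to obtain enclosures for $\partial g/\partial\mathcal{X}$ and $\partial g/\partial s$ via (\ref{eq:dg-1})--(\ref{eq:dg-3}). For $\partial g/\partial \tau$ I just evaluate the vector field at $[x_{N/2}]$ and multiply by $N/2$. Second, I assemble the $2\times 2$ interval matrix $[\partial g/\partial(s,\tau)]$ and verify that it is uniformly invertible (e.g., by checking that its interval determinant is bounded away from zero); then I apply (\ref{eq:dg-0}) to get interval enclosures of $ds/d\mathcal{X}$ and $d\tau/d\mathcal{X}$. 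Third, $dT/d\mathcal{X} = N\,d\tau/d\mathcal{X}$ follows immediately from $T = N\tau$, and $dx_0/d\mathcal{X} = (1,0,0,ds/d\mathcal{X})$ from the formula $x_0(\mathcal{X}) = (\mathcal{X},0,0,s(\mathcal{X}))$ in (\ref{eq:x0-formula}). Finally, plugging the analytic partials $\partial H/\partial X$ and $\partial H/\partial P_Y$ evaluated on $[x_0(\mathcal{X})]$ into (\ref{eq:dH-along-x0}) yields the enclosure of $dH/d\mathcal{X}$.

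Strictly speaking, the implicit function theorem needs to be applied on the whole interval (\ref{eq:X-interval}), not at a single $\mathcal{X}$. However, Lemma \ref{lem:Lyap-enclosure} already gives enclosures $[s]$ and $[\tau]$ that are valid over the entire parameter interval, so the interval matrix $[\partial g/\partial(s,\tau)]$ computed above encloses the derivative uniformly; its invertibility together with the smooth dependence of $\Phi_t$ on initial conditions lets us apply the implicit function theorem uniformly, and the formula (\ref{eq:dg-0}) then delivers enclosures that hold for all $\mathcal{X}\in[-0.95,-0.95+10^{-9}]$.

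The main obstacle I expect is keeping the interval enclosure of the monodromy product $[D\Phi_\tau(x_{N/2-1})]\cdots [D\Phi_\tau(x_0)]$ sharp enough. The product involves $N/2 = 15$ interval matrices along a hyperbolic orbit with a genuine expanding direction, so naive multiplication suffers from severe wrapping and the off-diagonal entries that determine $\partial g/\partial\mathcal{X}$ and $\partial g/\partial s$ could blow up. This is precisely the reason for the parallel-shooting discretization into short time steps: keeping each $\tau$ small (here $\tau \approx T/30 \approx 0.1$) keeps each factor $[D\Phi_\tau(x_i)]$ close to identity and its interval enclosure tight, so the accumulated wrapping over $15$ factors remains within the tolerances stated in the lemma. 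If necessary the discretization could be further refined, but the numerical sharpness of the asserted bounds (roughly six significant digits) suggests that $N=30$ is already sufficient.
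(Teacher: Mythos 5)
Your proposal follows exactly the scheme the paper uses: the text preceding the lemma (equations (\ref{eq:dg-0})--(\ref{eq:dH-along-x0})) lays out the implicit-function-theorem formulas and the paper then states that these were used ``to validate, with computer assistance'' the bounds, which is precisely the computation you describe, including the decomposition of the monodromy matrix via the parallel-shooting points, the evaluation of $\partial g/\partial\tau$ from the vector field at $x_{N/2}$, the inversion of $\partial g/\partial(s,\tau)$, and the chain rule for $dH/d\mathcal{X}$. Your remarks on uniformity over the $\mathcal{X}$-interval and on controlling wrapping by using short time steps are sensible supporting observations but do not change the route taken.
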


\begin{corollary}
\label{cor:persistence} By Theorem \ref{th:persistence} and Lemma
\ref{lem:persistence} we obtain the persistence of the normally hyperbolic
manifold $\tilde{\Lambda}_{L}$ (see (\ref{eq:nhim-lyap})), which consists of
our family of Lyapunov orbits in the extended phase space.
\end{corollary}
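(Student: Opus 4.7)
The plan is essentially a direct invocation of Theorem \ref{th:persistence}; the only content of the corollary is that its two nondegeneracy hypotheses (\ref{eq:dTdX}) and (\ref{eq:dHdX}) are satisfied uniformly on the interval (\ref{eq:X-interval}). I would write out no new computation here, but simply read off both conditions from the interval enclosures proved in Lemma \ref{lem:persistence} and then quote Theorem \ref{th:persistence}.

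More concretely, Lemma \ref{lem:persistence} gives the inclusions
\[
\tfrac{d}{d\mathcal{X}}T(\mathcal{X})\in[-16.57021,-16.56787],\qquad
\tfrac{d}{d\mathcal{X}}H\bigl(x_{0}(\mathcal{X})\bigr)\in[-0.359187,-0.359185],
\]
valid for every $\mathcal{X}$ in (\ref{eq:X-interval}). Both intervals are bounded away from $0$ (in fact strictly negative), so (\ref{eq:dTdX}) and (\ref{eq:dHdX}) hold on the entire family. Theorem \ref{th:persistence} therefore applies and yields the $O(\varepsilon)$-close normally hyperbolic invariant manifold $\tilde{\Lambda}_{L}^{\varepsilon}$ together with the Cantor set of invariant tori inside it, as claimed.

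There is no genuine obstacle at the corollary level; all the real work sits upstream, in Lemma \ref{lem:persistence}. That is where one has to rigorously propagate the derivatives $\partial g/\partial \mathcal{X}$, $\partial g/\partial s$, $\partial g/\partial \tau$ of (\ref{eq:dg-1})--(\ref{eq:dg-3}) along the parallel-shooting sequence from Lemma \ref{lem:Lyap-enclosure}, invert $\partial g/\partial(s,\tau)$ via (\ref{eq:dg-0}), and then combine with (\ref{eq:dH-along-x0}) in interval arithmetic. As a light self-consistency sanity check one may note that $T(\mathcal{X})=N\tau(\mathcal{X})$ with $N=30$, which is compatible with the enclosures $\tfrac{d\tau}{d\mathcal{X}}\in[-0.5523403,-0.5522624]$ and $\tfrac{dT}{d\mathcal{X}}\in[-16.57021,-16.56787]$ produced by the computer-assisted computation.
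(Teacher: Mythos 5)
Your proposal matches the paper's (implicit) argument exactly: the corollary is just the observation that the interval enclosures for $\frac{d}{d\mathcal{X}}T(\mathcal{X})$ and $\frac{d}{d\mathcal{X}}H(x_{0}(\mathcal{X}))$ from Lemma \ref{lem:persistence} are bounded away from zero, so hypotheses (\ref{eq:dTdX}) and (\ref{eq:dHdX}) of Theorem \ref{th:persistence} hold and the conclusion follows. Your sanity check $T(\mathcal{X})=N\tau(\mathcal{X})$ with $N=30$ is consistent with the stated enclosures and is a reasonable, if optional, addition.
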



\subsection{Proof of the main theorem\label{sec:main-proof}}

Recall that $\Phi_{t}^{\varepsilon}$ is the flow of the PER3BP in the extended
phase space. Let $\rho^{\varepsilon}:\mathbb{R}^{4}\times\mathbb{T}%
\rightarrow\mathbb{R}$ be the time to the section $\left\{  Y=0\right\}  $
\[
\rho^{\varepsilon}\left(  x\right)  =\inf\left\{  t>0:\Phi_{t}^{\varepsilon
}\left(  x\right)  \in\left\{  Y=0\right\}  \right\}  ,
\]
and let $\mathcal{P}_{\varepsilon}:\mathbb{R}^{4}\times\mathbb{T}\rightarrow\left\{  Y=0\right\}  $ be defined as
\begin{equation}
\mathcal{P}_{\varepsilon}\left(  x\right)  =\Phi_{\rho\left(  x\right)
}^{\varepsilon}\left(  x\right)  . \label{eq:P-eps-map}%
\end{equation}
The section $\left\{  Y=0\right\}  $ in the extended phase space is isomorphic
with $\mathbb{R}^{3}\times\mathbb{T}$, which fits the setting from section
\ref{sec:diff-mech}.

We consider a single point $x^{\ast}:=x_{0}\left(  -0.95\right)  $
($x_{0}(\mathcal{X})$ is defined in (\ref{eq:x0-formula}); see also
(\ref{eq:X-interval}) regarding the choice of $-0.95$) and consider the matrix
$\tilde{A}_{0}$ from (\ref{eq:A0-tilde}). We define%
\[
\tilde{\Sigma}=\left\{  \tilde{A}_{0}^{-1}\left(  x-x^{\ast},\theta\right)
:x\in\left\{  Y=0\right\}  ,\theta\in\mathbb{T}\right\}  .
\]
In other words, $\tilde{\Sigma}$ is the section $\left\{  Y=0\right\}  $
considered in the extended phase space, in the local coordinates given by the
affine change given by $x^{\ast}$ and $\tilde{A}_{0}$. We decide to work in
these local coordinates since then we can directly use the estimates on the local
unstable manifolds, which were established in section \ref{sec:local-unstable}%
. This is the reason why we choose $\tilde{\Sigma}$ as above.

To apply Theorem \ref{th:shadowing-seq} we will choose our family of maps
(\ref{eq:maps-for-diff-mech})%
\[
f_{\varepsilon}:\tilde{\Sigma}\rightarrow\tilde{\Sigma}%
\]
to be defined as
\begin{equation}
f_{\varepsilon}\left(  x,\theta\right)  =\tilde{A}_{0}^{-1}\left(
\mathcal{P}_{\varepsilon}\circ\mathcal{P}_{\varepsilon}\left(  \left(
x^{\ast},0\right)  +\tilde{A}_{0}\left(  x,\theta\right)  \right)  -\left(
x^{\ast},0\right)  \right)  . \label{eq:return-map}%
\end{equation}
In other words, we consider the return map to the section $\{Y=0\}$ expressed
in our local coordinates.

\begin{remark}\label{two-poincare-composition}\correction{comment 10}{
A single Lyapunov orbit becomes a two dimensional torus in the extended phase space. (See Figure \ref{fig:Lambda_0}; Left.) The intersection of such torus with the section $\{Y=0\}$ constitutes two disjoints circles.  Each of these is an invariant circle under $\mathcal{P}_{\varepsilon}\circ\mathcal{P}_{\varepsilon}$. For instance $\{x^*\} \times \mathbb{T}^1$ is one of such invariant circles. The circle $\{x^*\} \times \mathbb{T}^1$, which is invariant under under $\mathcal{P}_{\varepsilon}\circ\mathcal{P}_{\varepsilon}$, corresponds to the circle $\{0\}\times \mathbb{T}^1 \subset \tilde \Sigma $, which is invariant under the map $f_{\varepsilon=0}$.} 



\end{remark}

Prior to the perturbation, for $\varepsilon=0$, we define our normally hyperbolic invariant manifold $\Lambda_{0}\subset
\tilde{\Sigma}$ as (see (\ref{eq:nhim-lyap}) for the definition of
$\tilde{\Lambda}_{L}$)
\[
\Lambda_{0}:=\tilde{\Lambda}_{L}\cap\tilde{\Sigma}.
\]
(See Figure \ref{fig:Lambda_0}; Middle.) The manifold $\Lambda_{0}\ $is foliated by invariant circles for the map
$f_{0}$. \begin{figure}[ptb]
\begin{center}
\includegraphics[height=3.5cm]{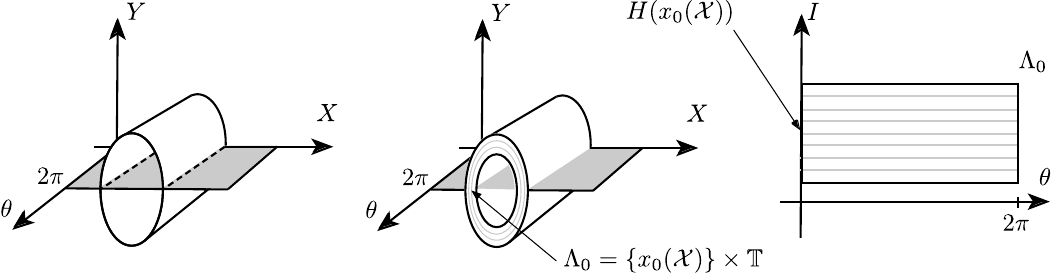}
\end{center}
\caption{\cor{Left: A Lyapuov orbit in the extended phase space is a torus, which intersected with $\{Y=0\}$ gives two circles denoted by dashed lines. Middle: The manifold $\Lambda_{0}\subset \{Y=0\}$ is one of the two components of the intersection of a family of Laypunov orbits in the extended phase space with $\{Y=0\}$. Right: $\Lambda_{0}$ in coordinates $(I,\theta)$.}}%
\label{fig:Lambda_0}%
\end{figure}Note that for $\varepsilon=0$ the energy is preserved, so for
\begin{equation}
I(x):=H_{0}\left(  \left(  x^{\ast},0\right)  +\tilde{A}_{0} x \right)
\label{eq:energy-in-proof}%
\end{equation}
we see that%
\begin{equation}
I\left(  f_{0}\left(  x\right)  \right)  =I(x).
\label{eq:energy-preserved-in-proof}%
\end{equation}

We treat $I$ as our conserved variable when $\varepsilon=0$ and write
\[
\pi_{I}x=I(x).
\]
Thus, condition (\ref{eq:energy-preserved-in-proof}) plays the role of
(\ref{eq:energy-preserved}).

We can write $f_{\varepsilon}$ as%
\[
f_{\varepsilon}\left(  x\right)  =f_{0}\left(  x\right)  +\varepsilon g\left(
\varepsilon,x\right)
\]
where%
\begin{align}
\cor{g\left(  \varepsilon,x\right) }& \cor{= \frac{\partial f_{\varepsilon}}{\partial \varepsilon}(x)|_{\varepsilon=0}} \notag \\
g\left(  \varepsilon,x\right) & =\frac{1}{\varepsilon}\left(  f_{\varepsilon
}\left(  x\right)  -f_{0}\left(  x\right)  \right)   \qquad \cor{\mbox{for }\varepsilon \ne 0.} \label{eq:g-form}
\end{align}
To compute the change of the energy after an iterate of $f_{\varepsilon
}\left(  x\right)  $ we compute
\[
\pi_{I}f_{\varepsilon}\left(  x\right)  =\pi_{I}f_{0}\left(  x\right)
+\varepsilon\pi_{I}g\left(  \varepsilon,x\right)  =\pi_{I}x+\varepsilon\pi
_{I}g\left(  \varepsilon,x\right)  ,
\]
where $\pi_{I}g\left(  \varepsilon,x\right)  $ is
\[
\pi_{I}g\left(  \varepsilon,x\right)  =\frac{1}{\varepsilon}\left[  I\left(
f_{\varepsilon}\left(  x\right)  \right)  -I\left(  f_{0}\left(  x\right)
\right)  \right]  =\frac{1}{\varepsilon}\left[  I\left(  f_{\varepsilon
}\left(  x\right)  \right)  -I\left(  x\right)  \right]  .
\]
It follows from the above that%
\begin{equation}
\pi_{I}g\left(  0,x\right)  =\frac{d}{d\varepsilon}I(f_{\varepsilon
}(x))|_{\varepsilon=0}=DI\left(  f_{0}\left(  x\right)  \right)
\frac{\partial f_{0}}{\partial\varepsilon}\left(  x\right)  =\nabla I\left(
f_{0}\left(  x\right)  \right)  \cdot\frac{\partial f_{0}}{\partial
\varepsilon}\left(  x\right)  . \label{eq:gI-at-zero}%
\end{equation}
(In the above equation $\cdot$ is the scalar product.)

We are now ready for the proof of our main result.

\begin{proof}[Proof of Theorem \ref{th:main}]
We start by describing our system for $\varepsilon=0$. While discussing the
system for $\varepsilon=0$ we will recall some results for the PCR3BP established in the previous sections. We need
to keep in mind that these were considered in coordinates $\left(
X,Y,P_{X},P_{Y}\right)  \in\mathbb{R}^{4}$; without the extended time
coordinate $\theta$.

The manifold $\Lambda_{0}$ is invariant under $f_{0}$ and in coordinates
$X,Y,P_{X},P_{Y},\theta$ can be written as
\[
\Lambda_{0}=\left\{  \left(  x_{0}\left(  \mathcal{X}\right)  ,\theta\right)
:\text{ }\mathcal{X}\text{ is from (\ref{eq:X-interval}), } \theta
\in\mathbb{T}\right\}  \subset\mathbb{R}^{4}\times\mathbb{T}.
\]
For $\varepsilon=0$ the inner dynamics produced by $f_{0}$ on the manifold is
given as
\begin{equation}
\left(  x_{0}\left(  \mathcal{X}\right)  ,\theta\right)  \mapsto\left(
x_{0}\left(  \mathcal{X}\right)  ,\theta+T\left(  \mathcal{X}\right)  \right)
,\label{eq:inner-dynamics}%
\end{equation}
where $T\left(  \mathcal{X}\right)  $ is the period of the Lyapunov orbit
$L_{\mathcal{X}}$. Thus $\Lambda_{0}$ is an invariant cylinder, foliated by
invariant curves.

Recall that for a given single Lyapunov orbit $L_{\mathcal{X}}$, we have
established in Lemma \ref{lem:Wu-bound-CAP} the bounds on a curve
$p^{\mathrm{u}}(u)$, with $u\in\left(  -r,r\right)  $, which lies along the
intersection of the two dimensional local  unstable manifold of $L_{\mathcal{X}%
}$ (for the PCR3BP in $\mathbb{R}^{4}$) with the section $\{Y=0\}$. Let us
emphasize the dependence of $p^{\mathrm{u}}(u)$ on $\mathcal{X}$ by writing
$p_{\mathcal{X}}^{\mathrm{u}}\left(  u\right)  $. The unstable manifold of
$\Lambda_{0}$ for $f_{0}$, considered in $\{Y=0\}$ (in the extended phase
space) is three dimensional, and in the coordinates $X,P_{X},P_{Y},\theta$,
can locally be written as%
\[
W_{\Lambda_{0}}^{u}=\left\{  \left(  p_{\mathcal{X}}^{\mathrm{u}}\left(
u\right)  ,\theta\right)  :\text{ }\mathcal{X}\text{ is from
(\ref{eq:X-interval}), }u\in\left(  -r,r\right)  ,\theta\in\mathbb{T}\right\}
.
\]
By considering the $\mathcal{R}$-symmetry of the PCR3BP, in the extended phase
space, and restricted to $\{Y=0\}$, i.e.
\[
\mathcal{\tilde{R}}\left(  X,P_{X},P_{Y},\theta\right)  :=\left(
X,-P_{X},P_{Y},\theta\right)  ,
\]
we obtain the local stable manifold%
\[
W_{\Lambda_{0}}^{s}=\mathcal{\tilde{R}}\left(  W_{\Lambda_{0}}^{u}\right)  .
\]

We will now show that for $\varepsilon=0$ we have a well defined 
scattering map \correction{comment 12}{}
\begin{equation}
\sigma:\Lambda_{0}\rightarrow\Lambda_{0}.\label{eq:scatter-unperturbed}
\end{equation}
For this we first need to establish a homoclinic channel. By Lemmas
\ref{lem:homoclinic-bound}, \ref{tem:inter-transversal} we know that the two
dimensional stable and unstable manifolds of $L_{\mathcal{X}}$ in the PCR3BP
in $\mathbb{R}^{4}$ intersect transversally (when considered on a three
dimensional fixed energy set $\left\{  H_{0}=I\right\}  $, where
$I=H_{0}\left(  L_{\mathcal{X}}\right)  $) along an $\mathcal{R}$-symmetric
homoclinic orbit, which contains a point which we shall denote here as
$\gamma\left(  I\right)  \in\{Y=0\}$. The two dimensional stable and unstable
manifolds of a given Lyapunov orbit $L_{\mathcal{X}}$, when intersected with
$\{Y=0\}$, become one dimensional curves in $\{Y=0\}$ which intersect at
$\gamma\left(  I\right)  $. Let us denote these curves as $w_{I}^{\mathrm{u}%
}\left(  u\right)  $ and $w_{I}^{\mathrm{s}}\left(  s\right)  $, and work
under a convention that $w_{I}^{\mathrm{u}}\left(  0\right)  =w_{I}%
^{\mathrm{s}}\left(  0\right)  =\gamma\left(  I\right)  $. (This can always be
ensured by re-parameterising the curves.) We have added the subscript $I$ to
emphasize the dependence of the curves on the choice of the energy level: on
different energy levels we have a different Lyapunov orbits, that lead to
different curves. We have shown during the proof of Lemma
\ref{tem:inter-transversal} that the tangent vectors to these curves span the
$X,P_{X}$ plane, i.e.
\begin{equation}
\mathrm{span}\left(  \pi_{X,P_{X}}\frac{d}{du}w_{I}^{\mathrm{u}}(u)|_{u=0}%
,\pi_{X,P_{X}}\frac{d}{ds}w_{I}^{\mathrm{s}}(s)|_{s=0}\right)  =\mathbb{R}%
^{2}. \label{eq:X-PX-Trans}%
\end{equation}

We shall take
\begin{equation}
\Gamma:=\left\{  \left(  \gamma\left(  I\right)  ,\theta\right)  :\text{
}I=H_{0}\left(  L_{\mathcal{X}}\right)  \text{, }\mathcal{X}\text{ is from
(\ref{eq:X-interval}) and }\theta\in\mathbb{T}\right\}  \subset\tilde{\Sigma}
\label{eq:Gamma-def}%
\end{equation}
and prove that it is a well defined homoclinic channel by showing
(\ref{eq:scatter-cond-1}--\ref{eq:scatter-cond-4}).

It will be convenient for us to check the transversality conditions
(\ref{eq:scatter-cond-1}--\ref{eq:scatter-cond-4}) in coordinates
$X,P_{X},I,\theta$. In these coordinates we can parameterise $\Lambda_{0}$ by
$I,\theta$ (see Figure \ref{fig:Lambda_0})%
\[
\Lambda_{0}=\left\{  \left(  \pi_{X,P_{X}}x_{0}\left(  \mathcal{X}\right)
,I,\theta\right)  :I\in\mathbb{R},\text{ }\theta\in\mathbb{T},\,H_{0}\left(
L_{\mathcal{X}}\right)  =I\right\}  .
\]
Close the intersection of the stable and unstable manifold at $\left(
\gamma\left(  I\right)  ,\theta\right)  $, we can parameterise the manifold
$W_{\Lambda_{0}}^{u}$ by $I,\theta,u$ as follows%
\[
W_{\Lambda_{0}}^{u}=\left\{  \left(  \pi_{X,P_{X}}w_{I}^{\mathrm{u}}\left(
u\right)  ,I,\theta\right)  :I\in\mathbb{R},\text{ }\theta\in\mathbb{T}%
,\,u\in\mathbb{R}\right\}  .
\]
We can similarly parameterise the manifold $W_{\Lambda_{0}}^{u}$ by
$I,\theta,s$
\[
W_{\Lambda_{0}}^{s}=\left\{  \left(  \pi_{X,P_{X}}w_{I}^{\mathrm{s}}\left(
s\right)  ,I,\theta\right)  :I\in\mathbb{R},\text{ }\theta\in\mathbb{T}%
,\,s\in\mathbb{R}\right\}  ,
\]
and parameterise $\Gamma$ by $I,\theta$ as in (\ref{eq:Gamma-def}). We see
that at a point $\left(  \gamma\left(  I\right)  ,\theta\right)  \in\Gamma$
\begin{align}
T_{\left(  \gamma\left(  I\right)  ,\theta\right)  }W_{\Lambda_{0}}^{u}  &
=\mathrm{span}\left\{  \left(  \pi_{X,P_{X}}\frac{d}{du}w_{I}^{\mathrm{u}%
}\left(  u \right) |_{u=0},0,0\right)  ,\left(  \pi_{X,P_{X}}\frac{d}{dI}%
w_{I}^{\mathrm{u}}\left(  0\right)  ,1,0\right)  ,\left(  0,0,0,1\right)
\right\}  ,\label{eq:TWu}\\
T_{\left(  \gamma\left(  I\right)  ,\theta\right)  }W_{\Lambda_{0}}^{s}  &
=\mathrm{span}\left\{  \left(  \pi_{X,P_{X}}\frac{d}{ds}w_{I}^{\mathrm{s}%
}\left(  s\right) |_{s=0},0,0\right)  ,\left(  \pi_{X,P_{X}}\frac{d}{dI}%
w_{I}^{\mathrm{s}}\left(  0\right)  ,1,0\right)  ,\left(  0,0,0,1\right)
\right\}  ,\label{eq:TWs-temp}\\
T_{\left(  \gamma\left(  I\right)  ,\theta\right)  }\Gamma &  =\mathrm{span}%
\left\{  \left(  \pi_{X,P_{X}}\frac{d}{dI}\gamma\left(  I\right)  ,1,0\right)
,\left(  0,0,0,1\right)  \right\}  . \label{eq:TGamma}%
\end{align}
We know that $\gamma\left(  I\right)  $ results from the intersection of
$w_{I}^{\mathrm{u}}\left(  u\right)  $ and $w_{I}^{\mathrm{s}}\left(
s\right)  $ at $u=s=0$
\[
\gamma\left(  I\right)  =w_{I}^{\mathrm{u}}\left(  0\right)  =w_{I}%
^{\mathrm{s}}\left(  0\right)  ,
\]
so%
\begin{equation}
\pi_{X,P_{X}}\frac{d}{dI}\gamma\left(  I\right)  =\pi_{X,P_{X}}\frac{d}%
{dI}w_{I}^{\mathrm{u}}\left(  0\right)  =\pi_{X,P_{X}}\frac{d}{dI}%
w_{I}^{\mathrm{s}}\left(  0\right)  . \label{eq:tangent-vectors-same}%
\end{equation}
We now see that from (\ref{eq:X-PX-Trans}), (\ref{eq:TWu}%
--\ref{eq:tangent-vectors-same}) we have (\ref{eq:scatter-cond-1}%
--\ref{eq:scatter-cond-2}).

We now turn to proving (\ref{eq:scatter-cond-3}--\ref{eq:scatter-cond-4}). For
a fixed $I$ let us take $x=x_{0}\left(  \mathcal{X}\right)  $ where
$\mathcal{X}$ is such that $H_{0}\left(  L_{\mathcal{X}}\right)  =I$. Let us
also consider a fixed $\lambda\in\mathbb{T}$. We see that the unstable and
stable fibres of the point $(x,\lambda)\in\Lambda_{0}$ are parameterised by
$u$ and $s$, respectively, as
\begin{align*}
W_{\left(  x,\lambda\right)  }^{u} &  =\left\{  \left(  \pi_{X,P_{X}}%
w_{I}^{\mathrm{u}}\left(  u\right)  ,I,\theta_{I,\lambda}^{\mathrm{u}}\left(
u\right)  \right)  :u\in\mathbb{R}\right\}  ,\\
W_{\left(  x,\lambda\right)  }^{s} &  =\left\{  \left(  \pi_{X,P_{X}}%
w_{I}^{\mathrm{s}}\left(  s\right)  ,I,\theta_{I,\lambda}^{\mathrm{s}}\left(
s\right)  \right)  :s\in\mathbb{R}\right\}  ,
\end{align*}
where $\theta_{I,\lambda}^{\mathrm{u}},\theta_{I,\lambda}^{\mathrm{s}%
}:\mathbb{R}\rightarrow\mathbb{T}$ are some functions, which parameterise the
fibers along the angle coordinate. Hence for every $I,\theta$
\begin{align}
T_{\left(  \gamma\left(  I\right)  ,\theta\right)  }W_{\left(  x,\theta
\right)  }^{u} &  =\mathrm{span}\left\{  \left(  \pi_{X,P_{X}}\frac{d}%
{du}w_{I}^{\mathrm{u}}\left(  u\right)  |_{u=0},0,\frac{d}{du}\theta
_{I,\theta}^{\mathrm{u}}\left(  u\right)  |_{u=0}\right)  \right\}
,\label{eq:TWux}\\
T_{\left(  \gamma\left(  I\right)  ,\theta\right)  }W_{\left(  x,\theta
\right)  }^{s} &  =\mathrm{span}\left\{  \left(  \pi_{X,P_{X}}\frac{d}%
{ds}w_{I}^{\mathrm{s}}\left(  s\right)  |_{s=0},0,\frac{d}{ds}\theta
_{I,\theta}^{\mathrm{s}}\left(  s\right)  |_{s=0}\right)  \right\}
.\label{eq:TWsx}%
\end{align}
From (\ref{eq:TWs-temp}), (\ref{eq:TGamma}), (\ref{eq:tangent-vectors-same})
and (\ref{eq:TWsx}) we obtain (\ref{eq:scatter-cond-3}). Similarly, from
(\ref{eq:TWu}), (\ref{eq:TGamma}), (\ref{eq:tangent-vectors-same}) and
(\ref{eq:TWux}) we have (\ref{eq:scatter-cond-4}). We have thus shown that
$\Gamma$ is a well defined homoclinic channel.

We now discuss the scattering map (\ref{eq:scatter-unperturbed}) associated with $\Gamma$. For fibers
$W_{\left(  x_{-},\theta_{-}\right)  }^{u}$ and $W_{\left(  x_{+},\theta
_{+}\right)  }^{s}$ to intersect we must have $x_{+}=x_{-}$, since if this was
not the case then the points $x_{-},x_{+}$ would lie on different energy
levels, and their fibres would not meet. This means that\correction{comment 31}{}
\[
\sigma\left(  x,\lambda\right)  =\left(  x,\pi_{\theta}\sigma\left(
x,\lambda\right)  \right)  ,
\]
We now show how to obtain estimates for $\pi_{\theta}\sigma\left(
x,\lambda\right)  $.

For every $\left(  x,\lambda\right)  \in\Lambda_{0}$ we have the homoclinic
orbit established in section \ref{sec:inter} (see in particular Table
\ref{tabl:homoclinic}) with the initial point $x_{0}=x_{0}\left(
x,\lambda\right)  $ lying on the unstable fiber established in Lemma
\ref{lem:fiber-extended}. From (\ref{eq:fibers-extended-2}) it follows that%
\[
\left\vert \pi_{\theta}x_{0}\left(  x,\lambda\right)  -\lambda\right\vert \leq
Mr=3\cdot 3\cdot10^{-8}.
\]
As the point $x_{0}\left(  x,\lambda\right)  $ is iterated by $f_{0}$ the
angle $\theta$ changes. From Remark \ref{rem:time-between-points} we know that
after the full excursion along the homoclinic from Table \ref{tabl:homoclinic}
we return to the neighbourhood of $\Lambda_{0}$ at an angle $\pi_{\theta}%
x_{0}\left(  x,\lambda\right)  +42\cdot h$, where $h=h(x)$ is from the
interval (\ref{eq:h-bound}). Such homoclinic excursion takes five iterates of
$f_{0}$ (see Table \ref{tabl:homoclinic}) so%
\[
\pi_{\theta}f_{0}^{5}\left(  x_{0}\left(  x,\lambda\right)  \right)
=\pi_{\theta}x_{0}\left(  x,\lambda\right)  +42\cdot h.
\]
We know that $f_{0}^{5}\left(  x_{0}\left(  x,\lambda\right)  \right)  $ lies
in the stable fiber of $f_{0}^{5}\left(  \sigma\left(  x,\lambda\right)
\right)  $. We also know that%
\[
f_{0}^{5}\left(  \sigma\left(  x,\lambda\right)  \right)  =f_{0}^{5}\left(
x,\pi_{\theta}\sigma\left(  x,\lambda\right)  \right)  =\left(  x,\pi_{\theta
}\sigma\left(  x,\lambda\right)  +5T(x)\right)  ,
\]
where $T\left(  x\right)  $ is the period of the Lyapunov orbit. From Lemma
\ref{lem:fiber-extended} and the $\mathcal{R}$-symmetry of the system since
$f_{0}^{5}\left(  x_{0}\left(  x,\lambda\right)  \right)  \in W_{f_{0}%
^{5}\left(  \sigma\left(  x,\lambda\right)  \right)  }^{s}$ we have
\[
\left\vert \pi_{\theta}\left(  f_{0}^{5}\left(  x_{0}\left(  x,\lambda\right)
\right)  -f_{0}^{5}\left(  \sigma\left(  x,\lambda\right)  \right)  \right)
\right\vert \leq Mr.
\]
This allows us to obtain the following estimate for the scattering map%
\begin{align}
\pi_{\theta}\sigma\left(  x,\lambda\right)   &  =\pi_{\theta}\sigma\left(
x,\lambda\right)  +5T(x)-5T(x)\nonumber\\
&  =\pi_{\theta}f_{0}^{5}\left(  \sigma\left(  x,\lambda\right)  \right)
-5T(x)\nonumber\\
&  =\pi_{\theta}f_{0}^{5}\left(  x_{0}\left(  x,\lambda\right)  \right)
+\pi_{\theta}\left(  f_{0}^{5}\left(  \sigma\left(  x,\lambda\right)  \right)
-f_{0}^{5}\left(  x_{0}\left(  x,\lambda\right)  \right)  \right)
-5T(x)\nonumber\\
&  \in\pi_{\theta}x_{0}\left(  x,\lambda\right)  +42\cdot h+\left[
-Mr,Mr\right]  -5T(x)\nonumber\\
&  =\lambda+\left(  \pi_{\theta}x_{0}\left(  x,\lambda\right)  -\lambda
\right)  +42\cdot h+\left[  -Mr,Mr\right]  -5T(x)\nonumber\\
&  \in\lambda+\left[  -Mr,Mr\right]  +42\cdot h+\left[  -Mr,Mr\right]
-5T(x)\nonumber\\
&  \in\lambda+[-0.82506903038, -0.82506533656]. \label{eq:sigma-bound-cap}%
\end{align}
We have thus obtained our bound for the scattering map of the unperturbed system.

We have finished our discussion about the unperturbed system. Now we turn to
showing that for sufficiently small $\varepsilon>0$ the manifold $\Lambda_{0}$ persists.

Recall the notation $\tilde{\Lambda}_{L}$ from (\ref{eq:nhim-lyap}), which
stands for our family of Lyapunov orbits in the extended phase space. Recall
also that $\Lambda_{0}=\tilde{\Lambda}_{L}\cap\{Y=0\}$. By Corollary
\ref{cor:persistence} we know that for sufficiently small $\varepsilon$ the
normally hyperbolic invariant manifold $\tilde{\Lambda}_{L}$ is perturbed to a
nearby manifold $\tilde{\Lambda}_{L}^{\varepsilon}$. We thus obtain
$\Lambda_{\varepsilon}:=\tilde{\Lambda}_{L}^{\varepsilon}\cap\{Y=0\}$ as the
perturbation of $\Lambda_{0}$. \correction{comment 30}{}

\cor{
Let $\omega_{\varepsilon}$ be the following symplectic form on $\{Y=0\}$ 
\begin{equation}\omega_{\varepsilon}=
dX\wedge dP_X 
-\frac{\partial H_{\varepsilon}}{\partial X}dX\wedge d\theta
-\frac{\partial H_{\varepsilon}}{\partial P_X}dP_X\wedge d\theta
-\frac{\partial H_{\varepsilon}}{\partial P_Y}dP_Y\wedge d\theta. \label{eq:symplectric-form-3bp}
\end{equation}
The map $\mathcal{P}_{\varepsilon}$ is $\omega_{\varepsilon}$ symplectic. (See Lemma \ref{lem:symplectic} in \ref{sec:poinc-symplectic}.) The matrix $\Omega$ associated with the symplectic form $\omega_{0}$
 is of the form%
\[
\Omega=\left(
\begin{array}
[c]{cccc}%
0 & 1 & 0 & -\frac{\partial H_0}{\partial X}\\
-1 & 0 & 0 & -\frac{\partial H_0}{\partial P_{X}}\\
0 & 0 & 0 & -\frac{\partial H_0}{\partial P_{Y}}\\
\frac{\partial H_0}{\partial X} & \frac{\partial H_0}{\partial P_{X}} &
\frac{\partial H_0}{\partial P_{Y}} & 0
\end{array}
\right)  .
\]
The manifold $\Lambda_{0}$ is parameterised by the function $\lambda
_{0}:\left(  \mathcal{X},\mathcal{\theta}\right)  \mapsto\left(  x_{0}\left(
\mathcal{X}\right)  ,\theta\right)  $, so the tangent space $T\Lambda_{0}$
spans the two vectors $\lambda^{1}:=\frac{\partial\lambda_{0}}{\partial
\mathcal{X}}=(1,0,\frac{ds}{d\mathcal{X}},0)$ and $\lambda^{2}:=\frac
{\partial\lambda_{0}}{\partial\mathcal{\theta}}=\left(  0,0,0,1\right)  .$
Taking the $4\times2$ matrix $C$ with columns consisting of $\lambda^{1}$ and
$\lambda^{2}$, from (\ref{eq:dH-along-x0}) we see that the matrix associated with the
symplectic form $\omega_{0}|_{\Lambda_{0}}$ is equal to
\[
C^{T}\Omega C=\left(
\begin{array}
[c]{cc}%
0 & -\left(  \frac{\partial H_0}{\partial X}+\frac{\partial H_0}{\partial P_{Y}%
}\frac{ds}{d\mathcal{X}}\right)  \\
\frac{\partial H_0}{\partial X}+\frac{\partial H_0}{\partial P_{Y}}\frac
{ds}{d\mathcal{X}} & 0
\end{array}
\right)  =\left(
\begin{array}
[c]{cc}%
0 & -\frac{d}{d\mathcal{X}}H_0\left(  x_{0}\left(  \mathcal{X}\right)  \right)
\\
\frac{d}{d\mathcal{X}}H_0\left(  x_{0}\left(  \mathcal{X}\right)  \right)   & 0
\end{array}
\right)  .
\]
By Lemma \ref{lem:persistence} we see that for $\mathcal{X}$ from the interval (\ref{eq:X-interval})
we have $\frac{d}{d\mathcal{X}}H_0\left(  x_{0}\left(  \mathcal{X}\right)
\right)  \neq0$, hence $\omega_{0}|_{\Lambda_{0}}$ is not degenerate. This means that for sufficiently small $\varepsilon$ the symplectic form $\omega_{\varepsilon}|_{\Lambda_{\varepsilon}}$ is also not degenerate. The map $\mathcal{P}_{\varepsilon}$ is $\omega_{\varepsilon}$ symplectic (see Lemma \ref{lem:symplectic} in \ref{sec:poinc-symplectic}), so $\mathcal{P}_{\varepsilon}\circ\mathcal{P}_{\varepsilon}$ is $\omega_{\varepsilon}|_{\Lambda_{\varepsilon}}$ symplectic on $\Lambda_{\varepsilon}$. 
We can define a measure on $\Lambda_{\varepsilon}$ as 
\begin{equation} \mu(B):= \int_{\left(x^{\ast},0\right)  +\tilde{A}_{0} B } \omega_{\varepsilon}|_{\Lambda_{\varepsilon}} \qquad \mbox{for } B\subset \Lambda_{\varepsilon}.\label{eq:measure-def}\end{equation}
For such measure $f_{\varepsilon}$ defined in (\ref{eq:return-map}) are measure preserving.
}

Since by Theorem \ref{th:persistence} we know
that $\tilde{\Lambda}_{L}^{\varepsilon}$ is a manifold with a boundary that
consists of two, two dimensional invariant KAM tori. These two tori
intersected with $\{Y=0\}$ produce two curves, which are invariant under
$f_{\varepsilon}$. They become boundaries of $\Lambda_{\varepsilon}$. Thus
$\Lambda_{\varepsilon}$ is a normally hyperbolic invariant manifold, with
boundary, for $f_{\varepsilon}$. Similarly, all the two dimensional KAM tori in $\tilde \Lambda^{\varepsilon}_L$ become one dimensional invariant tori in $\Lambda_{\varepsilon}$.

We now turn to validating the assumptions of Theorem \ref{th:shadowing-seq} to
obtain our result. This will be done in the local coordinates given by the
affine change involving $\tilde{A}_{0}$ and $x^{\ast}$, in which our
$f_{\varepsilon}$ is expressed. Recall that in Lemma \ref{lem:Wu-bound-CAP}
and Corollary \ref{cor:ps-der} we have established bounds on the intersection
of the local unstable manifold of a Lyapunov orbit with $\{Y=0\}$. This bound is
valid in the following neighbourhood of $\Lambda_{0}$
\begin{equation}
U=\left[  -r,r\right]  ^{4}\times\mathbb{T}, \label{eq:U-in-proof}%
\end{equation}
where $r=3\cdot10^{-8}$ is the constant from Lemma \ref{lem:Wu-bound-CAP}. We
have performed a computer assisted validation that the constant $L_{g}$ from
(\ref{eq:Lip-g-assumption}) is%
\begin{equation}
L_{g}=90943. \label{eq:Lg-bound}%
\end{equation}
This value was computed using the method described in detail in \ref{sec:appendix}.
The value obtained in (\ref{eq:Lg-bound}) is a large overestimate. By
performing more careful checks, for instance by subdividing $U$ into small
fragments, the bound can be significantly improved. Due to the small size of
the neighbourhood (\ref{eq:U-in-proof}) in which we consider the local
unstable manifold, we see that the constant $C$ from
(\ref{eq:contraction-expansion-bounds}) can is very small:
\begin{equation}
C=r=3\cdot10^{-8}. \label{eq:C-bound}%
\end{equation}
Thus the large value of $L_{g}$ is not a problem for us, since $L_{g}$ enters
condition (\ref{eq:key-assumption-again}) multiplied by $C$. We use the bound
$\bar{m}$ from Remark \ref{rem:Lip-local} to compute
\[
\lambda=\left(  \bar{m}\right)  ^{-30}=\frac{1}{1525.16}.
\]
The power $30$ comes from the fact that to complete a full turn round a
Lyapunov orbit involves $30$ local maps (\ref{eq:fi-def}); see Table
\ref{tabl:Lyap}. We consider the following strips\footnote{The positioning of
the strips was motivated by computing $\sum_{i=0}^{4}\pi_{I}g\left(
0,f_{0}^{i}\left(  x\right)  \right)  $, which represents the change of energy
after the homoclinic excursion along the homoclinic. These five terms in the
sum play the leading role in (\ref{eq:final-check}). In Figure
\ref{fig:energyChange} we provide a plot of computer assisted bounds on $\sum_{i=0}^{4}\pi_{I}g\left(
0,f_{0}^{i}\left(  x\right)  \right)  $, for different choices of $\pi_{\theta
}z$, and place our strips $S^{+},S^{-}$ for reference in the figure.}
\begin{align*}
S^{-}  &  =\Lambda_{0}\cap\left\{  \theta\in\left[
0.65-0.125,0.65+0.125\right]  \right\}  ,\\
S^{+}  &  =\Lambda_{0}\cap\left\{  \theta\in\left[  \pi+0.65-0.125,\pi
+0.65+0.125\right]  \right\}  .
\end{align*}

For each point $z\in S^{+}$ we compute $m=m\left(  z\right)  $ such that
condition (\ref{eq:strip-return-cond}) is fulfilled. Depending on the choice
of $z$ the resulting $m$ can differ. To compute it we used the bound on
$\sigma$ from (\ref{eq:sigma-bound-cap}) and the fact that the inner dynamics
is given by (\ref{eq:inner-dynamics}) with the bound on $T(\mathcal{X})$ from
(\ref{eq:Lyap-period}). We check the assumptions of Theorem \ref{th:strip-up}
by subdividing $S^{+}$ into $25$ fragments along the $\theta$ coordinate, and
validated the assumptions for each fragment independently. For the first three
fragments for which $\theta$ was closest to $\pi+ 0.65-0.125$ it turned out
that a good choice is $m(z)=21$; for three fragments with $\theta$ close to
$\pi+ 0.65+0.125$ we used $m(z)=25$; and for the remaining we took $m(z)=23$.
For $z\in S^{+}$ the point $x\in W_{z}^{u}$ from condition 2. from Theorem
\ref{th:strip-up}, is taken as the first point from homoclinic orbit from
Lemma \ref{lem:homoclinic-bound}. For the alignment of $x$ along $\theta$ we
use the estimate (\ref{eq:fibers-extended-2}) from Lemma
\ref{lem:fiber-extended}. We then validate that%
\begin{equation}
\sum_{i=0}^{m\left(  z\right)  -1}\pi_{I}g\left(  0,f_{0}^{i}\left(  x\right)
\right)  -\frac{1+\lambda}{1-\lambda}L_{g}C>0. \label{eq:final-check}%
\end{equation}
The inequality (\ref{eq:final-check}) is validated with the aid of computer assisted estimates.

\begin{figure}[ptb]
\begin{center}
\includegraphics[height=4.5cm]{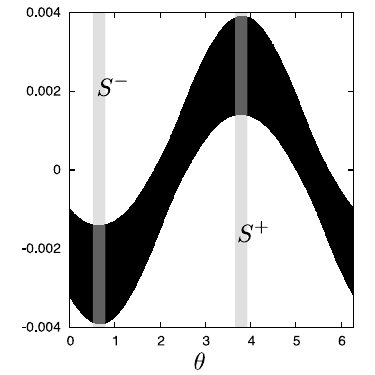}
\end{center}
\caption{The computer assisted bound for the first five terms in the sum in
(\ref{eq:final-check}), depending on $\pi_{\theta} z$.}%
\label{fig:energyChange}%
\end{figure}

In the similar fashion we validate the assumptions of Theorem
\ref{th:strip-down}.

Conditions 1. and 2. from Theorem \ref{th:shadowing-seq} are simple to
validate since for every point $z=\left(  x_{0}\left(  \mathcal{X}\right)
,\lambda\right)  \in\Lambda_{0}$ we have%
\[
\pi_{\theta}f\left(  z\right)  =\lambda+T\left(  \mathcal{X}\right)  ,
\]
where $T(\mathcal{X})$ is the period of the Lyapunov orbit, whose bound is
known to us in (\ref{eq:Lyap-period}).
\end{proof}

\begin{remark}
\label{rem:Oterma-m}Due to the fact that we work with the particular family of
Lyapunov orbits, which correspond to the energy of the comet Oterma, it turned
out that the $m$ used in conditions 1. and 2. from Theorems \ref{th:strip-up}
and \ref{th:strip-down} was a large number. This resulted in the need of large
number of iterates of $f_{0}$ when computing (\ref{eq:final-check}), which
meant long integration time. Also, we needed relatively wide
strips $S^{+},S^{-}$. This meant that we needed many subdivisions of the
strips to perform our validation. The long integration time and the large
number of sets increased the computational time of our proof. By making a more
careful choice of the energy level, one could focus on Lyapunov orbits for
which $m$ would be smaller. We have chosen not to do so to demonstrate that
the method is applicable for the choice of energy dictated by a concrete
physical object.
\end{remark}

\section{Acknowledgements}
We would like to thank the anonymous reviewers for their comments, suggestions, and corrections, which helped us improve our paper. We would also like to thank Piotr Zgliczy\'nski for helpful discussions.

\appendix

\cor{
\section{Symplectic properties of Poincar\'e maps for time dependent Hamiltonian systems in extended phase space}\label{sec:poinc-symplectic}

Consider a time dependent Hamiltonian system, with Hamiltonian $H:\mathbb{R}%
^{2n}\times\mathbb{R}\rightarrow\mathbb{R}$ and assume that $t\rightarrow
H\left(  x,t\right)  $ is $2\pi$ periodic. Consider that $x=\left(
p,q\right)  \in\mathbb{R}^{n}\times\mathbb{R}^{n}$, where $p$ are the
positions and $q$ their associated momenta, and take the standard symplectic
form $\omega=\sum_{i=1}^{n}dp_{i}\wedge dq_{i}$. The vector field in the
extended phase space induced by the Hamiltonian is%
\begin{align}
x^{\prime}  &  =J\nabla_{x}H\left(  x,t\right)  ,\label{eq:ode-time-dep}\\
t^{\prime}  &  =1,\nonumber
\end{align}
where%
\[
J=J_{n}:=\left(
\begin{array}
[c]{cc}%
0 & Id_{n}\\
-Id_{n} & 0
\end{array}
\right)  ,
\]
and $Id_{n}$ is an $n$-dimensional identity matrix.

Let us consider a section $\Sigma:=\left\{  p_{n}=0\right\}  \subset
\mathbb{R}^{2n}\times\mathbb{R}$ and assume that locally we have a well
defined Poincar\'{e} map
\[
P:\Sigma\rightarrow\Sigma.
\]

\begin{lemma}\label{lem:symplectic}
For the symplectic form%
\begin{equation}
\omega_{\Sigma}:=\sum_{i=1}^{n-1}dp_{i}\wedge dq_{i}-\sum_{i=1}^{n-1}%
\frac{\partial H}{\partial p_{i}}dp_{i}\wedge dt-\sum_{i=1}^{n}\frac{\partial
H}{\partial q_{i}}dq_{i}\wedge dt, \label{eq:symplectic-form-in-extended-space-app}
\end{equation}
the map $P$ is $\omega_{\Sigma}$-symplectic. Moreover, $\omega_{\Sigma}$ is nondegenerate.
\end{lemma}

\begin{proof}
We can expand the extended phase space by including an artificial additional
action variable $A$ and defining a Hamiltonian $\mathcal{H}:\mathbb{R}%
^{2\left(  n+1\right)  }\rightarrow\mathbb{R}$ as
\[
\mathcal{H}\left(  p,A,q,t\right)  :=H\left(  p,q,t\right)  +A
\]
and taking the standard symplectic form $\bar{\omega}=\sum_{i=1}^{n}%
dp_{i}\wedge dq_{i}+dA\wedge dt$. The vector field then becomes $\mathcal{F}%
=\mathcal{J}\nabla\mathcal{H},$ where $\mathcal{J}=J_{n+1}$. (Note that by
ignoring the artificial action variable $A$ we see that the solutions of
$\mathcal{F}$ coincide with the solutions of (\ref{eq:ode-time-dep}).)

Let $h\in\mathbb{R}$ be some number, and consider the section $\Sigma
_{h}:=\left\{  p_{n}=0,\mathcal{H}=h\right\}  \subset\mathbb{R}^{2\left(
n+1\right)  }$ and a Poincar\'{e} map $\mathcal{P}_{h}:\Sigma_{h}%
\rightarrow\Sigma_{h}$. The map $\mathcal{P}_{h}$ is symplectic with the
symplectic form $\omega_{h}=\sum_{i=1}^{n-1}dp_{i}\wedge dq_{i}+dA\wedge dt$.
We can parameterise the section $\Sigma_{h}$ by using coordinates $\left(
p_{1},\ldots,p_{n-1},q,t\right)  $; this is because
\[
\Sigma_{h}=\left\{  \left(  p_{1},\ldots,p_{n-1},0,A,q,t\right)  :A=h-H\left(
p_{1},\ldots,p_{n-1},0,q,t\right)  \right\}  .
\]

Since on $\Sigma_{h}$ we have $A=h-H\left(  p_{1},\ldots,p_{n-1},0,q,t\right)
$, in the coordinates $\left(  p_{1},\ldots,p_{n-1},q,t\right)  $, we have%
\[
\omega_{h}=\omega_{\Sigma}.
\]
We also observe that in the coordinates $\left(  p_{1},\ldots,p_{n-1}%
,q,t\right)  $ both the Poincar\'{e} map $\mathcal{P}_{h}$ as well as the
symplectic form $\omega_{h}$ do not depend on $h$. Moreover, in these
coordinates $\mathcal{P}_{h}=P$. We thus obtain that $P$ is $\omega_{\Sigma}%
$-symplectic, as required. 

The matrix $\Omega$ associated to $\omega_{\Sigma}$, (i.e. $\omega_{\Sigma
}\left(  v,w\right)  =v^{\top}\Omega w$) is of the form%
\begin{equation}
\Omega=\left(
\begin{array}
[c]{cccccccc}%
0 & \cdots & 0 & 1 &  & 0 & 0 & -\frac{\partial H}{\partial p_{1}}\\
\vdots &  & \vdots &  & \ddots &  & \vdots & \vdots\\
0 & \cdots & 0 & 0 & \cdots & 1 & 0 & -\frac{\partial H}{\partial p_{n-1}}\\
-1 &  & 0 & 0 & \cdots & 0 & 0 & -\frac{\partial H}{\partial q_{1}}\\
& \ddots &  & \vdots &  & \vdots & \vdots & \vdots\\
0 &  & -1 & 0 & \cdots & 0 & 0 & -\frac{\partial H}{\partial q_{n-1}}\\
0 & \cdots & 0 & 0 & \cdots & 0 & 0 & -\frac{\partial H}{\partial q_{n}}\\
\frac{\partial H}{\partial p_{1}} & \cdots & \frac{\partial H}{\partial
p_{n-1}} & \frac{\partial H}{\partial q_{1}} & \cdots & \frac{\partial
H}{\partial q_{n-1}} & \frac{\partial H}{\partial q_{n}} & 0
\end{array}
\right)  .\label{eq:sym-matrix}
\end{equation}
Since the Poincar\'e map is well defined, we have $\frac{\partial H}{\partial q_{n}}\neq0$. The fact that for all
$w$ we have $v^{\top}\Omega w=0$ implies that $v=0$, so $\omega_{\Sigma}$ is
nondegenerate, as required.
\end{proof}

}

\section{Proof of Lemma \ref{lem:expansion-technical} \label{sec:expansion-technical}}
\begin{proof}
Take $v\in Q^{+}\left(  0\right)  \cap B$. Since $a_{i}\in\left(  0,1\right)
$ we see that $\left\vert v_{1}\right\vert \geq\left\vert v_{i}\right\vert
/a_{i-1}>\left\vert v_{i}\right\vert $, for $i=2,3,4$. Since $f\left(
0\right)  =0$ from the mean value theorem we obtain
\begin{align*}
\left\Vert f\left(  v\right)  \right\Vert _{\max}  &  =\left\Vert f\left(
v\right)  -f\left(  0\right)  \right\Vert _{\max}\\
&  \in\left\Vert \left[  Df\left(  B\right)  \right]  v\right\Vert _{\max}\\
&  =\left\Vert \left(  A_{11}v_{1}+A_{12}\left(  v_{2},v_{3},v_{4}\right)
,A_{21}v_{1}+A_{22}\left(  v_{2},v_{3},v_{4}\right)  \right)  \right\Vert
_{\max}\\
&  \geq\left\vert A_{11}v_{1}+A_{12}\left(  v_{2},v_{3},v_{4}\right)
\right\vert \\
&  \geq\left(  c-\left\Vert A_{12}\right\Vert _{\max}\max\left(  a_{1}%
,a_{2},a_{3}\right)  \right)  \left\vert v_{1}\right\vert \\
&  =\left(  c-\left\Vert A_{12}\right\Vert _{\max}\max\left(  a_{1}%
,a_{2},a_{3}\right)  \right)  \left\Vert v\right\Vert _{\max},
\end{align*}
as required.
\end{proof}

\section{Proof of Lemma \ref{lem:Lip-technical} \label{sec:Lip-technical}}
\begin{proof}
Let $v=\left(  v_{1},v_{2}\right)  $ where $v_{1}=\pi_{1}v$ and $v_{2}%
=\pi_{2,3,4}v$. Since $v\in Q^{+}\left(  0\right)  $ we see that%
\[
\left\vert v_{1}\right\vert \geq a^{-1}\left\Vert v_{2}\right\Vert _{\max}.
\]
From the mean value theorem we obtain
\begin{align*}
\left\Vert f\left(  v\right)  \right\Vert _{\max} &  \in\left\Vert \left[
Df\left(  B\right)  \right]  v\right\Vert _{\max}\\
&  =\left\Vert \left(  A_{11}v_{1}+A_{12}v_{2},A_{21}v_{1}+A_{22}v_{2}\right)
\right\Vert _{\max}\\
&  \leq\max\left(  \left\vert A_{11}\right\vert \left\vert v_{1}\right\vert
+\left\Vert A_{12}\right\Vert _{\max}\left\Vert v_{2}\right\Vert _{\max
},\left\Vert A_{21}\right\Vert _{\max}\left\vert v_{1}\right\vert +\left\Vert
A_{22}\right\Vert _{\max}\left\Vert v_{2}\right\Vert _{\max}\right)  \\
&  \leq\max\left(  \left\vert A_{11}\right\vert +a\left\Vert A_{12}\right\Vert
_{\max},\left\Vert A_{21}\right\Vert _{\max}+a\left\Vert A_{22}\right\Vert
_{\max}\right)  \left\vert v_{1}\right\vert \\
&  =\max\left(  \left\vert A_{11}\right\vert +a\left\Vert A_{12}\right\Vert
_{\max},\left\Vert A_{21}\right\Vert _{\max}+a\left\Vert A_{22}\right\Vert
_{\max}\right)  \left\Vert v\right\Vert _{\max}%
\end{align*}
as required.
\end{proof}

\section{Lipschitz bounds for the perturbation term\label{sec:appendix}}

Here we give a method, with which we can check (\ref{eq:Lip-g-assumption}) in
the case when $I(x)$ is defined as (\ref{eq:energy-in-proof}). Below we start
with Lemma \ref{lem:Lg-bound-1}, that can be applied to achieve this. We have
found though that in our particular case of the PER3BP, due to long
integration times, a direct application of Lemma \ref{lem:Lg-bound-1} leads to
overestimates, which were too large for our needs. We therefore follow with
Lemma \ref{lem:Lg-bound-2}, which can be used for an inductive computation of
$L_{g}$ by expressing $f_{\varepsilon}$ as a composition of maps. This allowed
us to avoid long integration times and improved the estimate.

\begin{lemma}
\label{lem:Lg-bound-1} Let $D^{2}I(x)$ stand for the Hessian of $I$ at $x$.
Consider $x_{0},x_{1}\in\mathbb{R}^{3}\times\mathbb{S}$ and let $\mathbf{x}%
\subset\mathbb{R}^{3}\times\mathbb{S}$ be a convex set which contains $x_{0}$
and $x_{1}$. Let $\mathbf{v}\subset\mathbb{R}^{4}$ be the following interval
enclosure
\[
\mathbf{v}^{\top}=\left[  \left(  \frac{\partial f_{0}}{\partial\varepsilon
}\left(  \mathbf{x}\right)  \right)  ^{\top}D^{2}I\left(  f_{0}\left(
\mathbf{x}\right)  \right)  Df_{0}\left(  \mathbf{x}\right)  +DI\left(
f_{0}\left(  \mathbf{x}\right)  \right)  \frac{\partial^{2}f_{0}}{\partial
x\partial\varepsilon}\left(  \mathbf{x}\right)  \right]  .
\]
If $\left\Vert \mathbf{v}\right\Vert \leq L_{g}$ then%
\[
\left\vert \pi_{I}g\left(  0,x_{1}\right)  -\pi_{I}g\left(  0,x_{0}\right)
\right\vert \leq L_{g}\left\Vert x_{1}-x_{0}\right\Vert .
\]

\end{lemma}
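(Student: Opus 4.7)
The plan is a direct application of the mean value theorem to the scalar function $h(x) := \pi_I g(0, x)$. From equation (\ref{eq:gI-at-zero}) we already have the explicit formula $h(x) = \nabla I(f_0(x)) \cdot \frac{\partial f_0}{\partial \varepsilon}(x)$, and the key observation is that the bracketed expression defining $\mathbf{v}^\top$ is exactly the derivative $Dh(x)$.

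First I would differentiate $h$ using the product rule, treating it as the scalar product of the column vector $\partial f_0/\partial \varepsilon(x)$ with the row vector $DI(f_0(x))$. The chain rule applied to the factor $DI \circ f_0$ produces $D^2 I(f_0(x))\, Df_0(x)$, while direct differentiation of $\partial f_0/\partial \varepsilon$ in $x$ gives $\partial^2 f_0/(\partial x \partial \varepsilon)$. Combining the two contributions yields
\[
Dh(x) = \left(\frac{\partial f_0}{\partial \varepsilon}(x)\right)^\top D^2 I(f_0(x))\, Df_0(x) + DI(f_0(x))\, \frac{\partial^2 f_0}{\partial x \partial \varepsilon}(x),
\]
which is precisely the row vector $\mathbf{v}^\top$ appearing in the hypothesis.

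Since $\mathbf{x}$ is convex and contains both $x_0$ and $x_1$, the segment from $x_0$ to $x_1$ lies in $\mathbf{x}$, and the mean value theorem along this segment gives
\[
|h(x_1) - h(x_0)| \leq \sup_{x \in \mathbf{x}} \|Dh(x)\| \cdot \|x_1 - x_0\|.
\]
By construction $Dh(x)^\top \in \mathbf{v}$ for every $x \in \mathbf{x}$, so the hypothesis $\|\mathbf{v}\| \leq L_g$ bounds the supremum by $L_g$ and delivers the claimed Lipschitz estimate on $\pi_I g(0,\cdot)$.

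There is no substantial obstacle; the only care needed is consistency of norms — $\|\mathbf{v}\|$ must be the operator norm of $Dh$ viewed as a linear map from $(\mathbb{R}^3\times\mathbb{S}, \|\cdot\|)$ to $\mathbb{R}$, which in the max-norm convention used in the paper is the sum of absolute values of the entries of $\mathbf{v}$. Once the matching norm is fixed, everything reduces to the mechanical task of producing a rigorous interval enclosure of $Df_0$, $D^2 I$, and the mixed partial $\partial^2 f_0/\partial x\partial \varepsilon$ on $\mathbf{x}$, which is standard for the CAPD library; the subsequent Lemma \ref{lem:Lg-bound-2} then explains how to decompose $f_\varepsilon$ into shorter time steps so that these enclosures remain sharp.
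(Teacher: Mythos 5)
Your proposal is correct and follows essentially the same route as the paper: the paper writes $\pi_I g(0,x_1)-\pi_I g(0,x_0)$ as the integral of $\frac{d}{ds}$ of the expression from (\ref{eq:gI-at-zero}) along the segment $x_s=x_0+s(x_1-x_0)$, which after the same product/chain-rule expansion identifies the integrand with $\mathbf{v}^\top(x_1-x_0)$ and yields the bound $\left\Vert \mathbf{v}\right\Vert \left\Vert x_1-x_0\right\Vert$. Your mean-value-inequality formulation and the paper's integral form of the same argument are interchangeable, and your remark about matching $\left\Vert \mathbf{v}\right\Vert$ to the dual of the norm on $x_1-x_0$ is the only point of care in either version.
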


\begin{proof}
Let
\[
x_{s}:=x_{0}+s\left(  x_{1}-x_{0}\right)  .
\]
From (\ref{eq:gI-at-zero}) we obtain%
\begin{align*}
&  \pi_{I}g\left(  0,x_{1}\right)  -\pi_{I}g\left(  0,x_{0}\right) \\
&  =\int_{0}^{1}\frac{d}{ds}\nabla I\left(  f_{0}\left(  x_{s}\right)
\right)  \cdot\frac{\partial f_{0}}{\partial\varepsilon}\left(  x_{s}\right)
ds\\
&  =\int_{0}^{1}D^{2}I\left(  f_{0}\left(  x_{s}\right)  \right)
Df_{0}\left(  x_{s}\right)  \left(  x_{1}-x_{0}\right)  \cdot\frac{\partial
f_{0}}{\partial\varepsilon}\left(  x_{s}\right)  +\nabla I\left(  f_{0}\left(
x_{s}\right)  \right)  \cdot\frac{\partial^{2}f_{0}}{\partial x\partial
\varepsilon}\left(  x_{s}\right)  \left(  x_{1}-x_{0}\right)  ds\\
&  =\int_{0}^{1}\frac{\partial f_{0}}{\partial\varepsilon}\left(
x_{s}\right)  \cdot D^{2}I\left(  f_{0}\left(  x_{s}\right)  \right)
Df_{0}\left(  x_{s}\right)  \left(  x_{1}-x_{0}\right)  +\nabla I\left(
f_{0}\left(  x_{s}\right)  \right)  \cdot\frac{\partial^{2}f_{0}}{\partial
x\partial\varepsilon}\left(  x_{s}\right)  \left(  x_{1}-x_{0}\right)  ds\\
&  =\left(  \int_{0}^{1}\left(  \frac{\partial f_{0}}{\partial\varepsilon
}\left(  x_{s}\right)  \right)  ^{\top}D^{2}I\left(  f_{0}\left(
x_{s}\right)  \right)  Df_{0}\left(  x_{s}\right)  +DI\left(  f_{0}\left(
x_{s}\right)  \right)  \frac{\partial^{2}f_{0}}{\partial x\partial\varepsilon
}\left(  x_{s}\right)  ds\right)  \left(  x_{1}-x_{0}\right) \\
&  \in\mathbf{v}\cdot\left(  x_{1}-x_{0}\right)  ,
\end{align*}
so%
\[
\left\vert \pi_{I}g\left(  0,x_{1}\right)  -\pi_{I}g\left(  0,x_{0}\right)
\right\vert \leq\left\Vert \mathbf{v}\right\Vert \left\Vert x_{1}%
-x_{0}\right\Vert \leq L_{g}\left\Vert x_{1}-x_{0}\right\Vert ,
\]
as required.
\end{proof}

We now consider the case where $f$ is a composition of two functions
$f=f_{\varepsilon}^{1}\circ f_{\varepsilon}^{2}$. Our objective will be to
compute Lipschitz bound in terms of $x$ for
\begin{equation}
g\left(  \varepsilon,x\right)  :=\frac{1}{\varepsilon}\left(  I\circ
f_{\varepsilon}^{2}\circ f_{\varepsilon}^{1}\left(  \varepsilon,x\right)
-I(x)\right)  . \label{eq:g-for-f1-f2}%
\end{equation}
The following lemma gives the bound for $\pi_{I}g\left(  0,x\right)  $ from
bounds for $f_{\varepsilon}^{1}$ and $f_{\varepsilon}^{2}.$

\begin{lemma}
\label{lem:Lg-bound-2}Assume that%
\begin{align*}
f_{\varepsilon}  &  =f_{\varepsilon}^{2}\circ f_{\varepsilon}^{1},\\
g\left(  \varepsilon,x\right)   &  =\frac{1}{\varepsilon}\left(
f_{\varepsilon}\left(  x\right)  -f_{0}\left(  x\right)  \right)  ,\\
g_{1}\left(  \varepsilon,x\right)   &  =\frac{1}{\varepsilon}\left(
f_{\varepsilon}^{1}\left(  x\right)  -f_{0}^{1}\left(  x\right)  \right)  ,\\
g_{2}\left(  \varepsilon,x\right)   &  =\frac{1}{\varepsilon}\left(
f_{\varepsilon}^{2}\left(  x\right)  -f_{0}^{2}\left(  x\right)  \right)  ,
\end{align*}
and that
\begin{align*}
\left\Vert \pi_{I}\left(  g_{1}\left(  0,x_{1}\right)  -g_{1}\left(
0,x_{0}\right)  \right)  \right\Vert  &  \leq L_{g}^{1}\left\Vert x_{1}%
-x_{0}\right\Vert ,\\
\left\Vert \pi_{I}\left(  g_{2}\left(  0,x_{1}\right)  -g_{2}\left(
0,x_{0}\right)  \right)  \right\Vert  &  \leq L_{g}^{2}\left\Vert x_{1}%
-x_{0}\right\Vert ,\\
\left\Vert f_{0}^{1}\left(  x_{1}\right)  -f_{0}^{1}\left(  x_{0}\right)
\right\Vert  &  \leq L_{f}^{1}\left\Vert x_{1}-x_{0}\right\Vert .
\end{align*}
Then%
\[
\left\Vert \pi_{I}\left(  g\left(  0,x_{1}\right)  -g\left(  0,x_{0}\right)
\right)  \right\Vert \leq\left(  L_{g}^{2}L_{f}^{1}+L_{g}^{1}\right)
\left\Vert x_{1}-x_{0}\right\Vert .
\]

\end{lemma}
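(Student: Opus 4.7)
The plan is to reduce the statement to a clean algebraic identity relating $\pi_I g(0,\cdot)$ to $\pi_I g_1(0,\cdot)$ and $\pi_I g_2(0, f_0^1(\cdot))$, after which the Lipschitz estimate drops out of the triangle inequality and the three hypotheses.

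First I would telescope the composition. Writing
\[
f_{\varepsilon}(x)-f_{0}(x) = \bigl[f_{\varepsilon}^{2}\!\circ\! f_{\varepsilon}^{1}(x)-f_{0}^{2}\!\circ\! f_{\varepsilon}^{1}(x)\bigr] + \bigl[f_{0}^{2}\!\circ\! f_{\varepsilon}^{1}(x)-f_{0}^{2}\!\circ\! f_{0}^{1}(x)\bigr],
\]
the first bracket equals $\varepsilon\,g_{2}(\varepsilon,f_{\varepsilon}^{1}(x))$ directly from the definition of $g_2$. For the second bracket I use the crucial observation that in the setting of Section~\ref{sec:diff-mech} the unperturbed constituent maps $f_{0}^{1}$ and $f_{0}^{2}$ are pieces of the autonomous Hamiltonian flow (Poincar\'e or time-$\tau$ maps for $\varepsilon=0$) and therefore preserve the energy coordinate, i.e.\ $\pi_{I}\circ f_{0}^{2}=\pi_{I}$. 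Applying $\pi_I$ to the second bracket then gives $\pi_{I}f_{\varepsilon}^{1}(x)-\pi_{I}f_{0}^{1}(x)=\varepsilon\,\pi_{I}g_{1}(\varepsilon,x)$.

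Dividing by $\varepsilon$ and letting $\varepsilon\to 0$ yields the key identity
\[
\pi_{I}g(0,x) \;=\; \pi_{I}g_{1}(0,x) + \pi_{I}g_{2}\bigl(0,f_{0}^{1}(x)\bigr).
\]
Evaluating at $x_0$ and $x_1$ and subtracting, the triangle inequality combined with the three hypothesized Lipschitz bounds gives
\[
\bigl|\pi_{I}g(0,x_{1})-\pi_{I}g(0,x_{0})\bigr| \leq L_{g}^{1}\|x_{1}-x_{0}\| + L_{g}^{2}\|f_{0}^{1}(x_{1})-f_{0}^{1}(x_{0})\| \leq \bigl(L_{g}^{1}+L_{g}^{2}L_{f}^{1}\bigr)\|x_{1}-x_{0}\|,
\]
which is the claim.

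There is no serious obstacle; the only point that deserves care is the tacit use of $\pi_{I}\circ f_{0}^{2}=\pi_{I}$. Without this preservation one would be forced to control the full vector $g_{1}(0,x)$ rather than only its $\pi_I$ component, and the hypotheses of the lemma would no longer suffice. Fortunately, this preservation is automatic in the application to the PCR3BP, where every composition factor $f_{i}$ of Section~\ref{sec:local-unstable} is a section-to-section (or time-shift) map of the autonomous unperturbed flow and so conserves the Jacobi integral.
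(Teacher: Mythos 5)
Your proof is correct and follows essentially the same route as the paper's: the same telescoping of the composition into a $g_2$-term evaluated at $f_{\varepsilon}^{1}(x)$ plus a $g_1$-term, followed by the triangle inequality and the three hypotheses, with the same tacit use of energy preservation by the unperturbed factor maps that the paper also relies on when identifying the brackets with $\pi_I g_2$ and $\pi_I g_1$. The only cosmetic difference is that you pass to the limit $\varepsilon\to 0$ in the identity before invoking the Lipschitz bounds, whereas the paper applies them at $\varepsilon>0$ and carries an $o(\varepsilon)$ remainder to the end; your ordering is slightly cleaner but the argument is the same.
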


\begin{proof}
Consider fixed $x_{1},x_{0}$. Then%
\begin{align*}
&  \left\Vert \pi_{I}g\left(  \varepsilon,x_{1}\right)  -\pi_{I}g\left(
\varepsilon,x_{0}\right)  \right\Vert \\
&  =\left\Vert \frac{1}{\varepsilon}\left(  I\circ f_{\varepsilon}^{2}\circ
f_{\varepsilon}^{1}\left(  x_{1}\right)  -I\left(  x_{1}\right)  -\left(
I\circ f_{\varepsilon}^{2}\circ f_{\varepsilon}^{1}\left(  x_{0}\right)
-I\left(  x_{0}\right)  \right)  \right)  \right\Vert \\
&  \leq\left\Vert \frac{1}{\varepsilon}\left(  I\circ f_{\varepsilon}^{2}\circ
f_{\varepsilon}^{1}\left(  x_{1}\right)  -I\circ f_{\varepsilon}^{1}\left(
x_{1}\right)  -\left(  I\circ f_{\varepsilon}^{2}\circ f_{\varepsilon}%
^{1}\left(  x_{0}\right)  -I\circ f_{\varepsilon}^{1}\left(  x_{0}\right)
\right)  \right)  \right\Vert \\
&  \quad+\left\Vert \frac{1}{\varepsilon}\left(  I\circ f_{\varepsilon}%
^{1}\left(  x_{1}\right)  -I\left(  x_{1}\right)  -\left(  I\circ
f_{\varepsilon}^{1}\left(  x_{0}\right)  -I\left(  x_{0}\right)  \right)
\right)  \right\Vert \\
&  =\left\Vert \pi_{I}g_{2}\left(  \varepsilon,f_{\varepsilon}^{1}\left(
x_{1}\right)  \right)  -\pi_{I}g_{2}\left(  \varepsilon,f_{\varepsilon}%
^{1}\left(  x_{0}\right)  \right)  \right\Vert +\left\Vert \pi_{I}g_{1}\left(
\varepsilon,x_{1}\right)  -\pi_{I}g_{1}\left(  \varepsilon,x_{0}\right)
\right\Vert \\
&  \leq L_{g}^{2}\left\Vert f_{\varepsilon}^{1}\left(  x_{1}\right)
+f_{\varepsilon}^{1}\left(  x_{0}\right)  \right\Vert +L_{g}^{1}\left\Vert
x_{1}+x_{0}\right\Vert +o\left(  \varepsilon\right) \\
&  \leq\left(  L_{g}^{2}L_{f}^{1}+L_{g}^{1}\right)  \left\Vert x_{1}%
+x_{0}\right\Vert +o\left(  \varepsilon\right)
\end{align*}
and the result follows by passing with $\varepsilon$ to zero.
\end{proof}


The bound (\ref{eq:Lip-g-assumption}) computed in the proof (see section
\ref{sec:main-proof}) is for the Poincar\'{e} map $f_{\varepsilon}$ defined in
(\ref{eq:return-map}). This can be computed by considering
\begin{equation}
f_{\varepsilon}=f_{\varepsilon,N}\circ\ldots\circ f_{\varepsilon
,1},\label{eq:f-epsilon-split}%
\end{equation}
where $f_{\varepsilon,i}$ are local maps of the form
\begin{align*}
f_{\varepsilon,i}\left(  x\right)    & =A_{i}^{-1}\left(  \Phi_{t}%
^{\varepsilon}\left(  x_{i-1}+A_{i-1}x\right)  -x_{i}\right)  \qquad\text{for
}i=1,\ldots,N-1,\\
f_{\varepsilon,N}\left(  x\right)    & =A_{0}^{-1}\left(  \mathcal{P}%
^{\varepsilon}\left(  x_{N-1}+A_{N-1}x\right)  -x_{0}\right)  .
\end{align*}
In the inductive application of Lemma \ref{lem:Lg-bound-2} we can use the
bound $\bar{m}$ from Remark \ref{rem:Lip-local} and take $L_{f}^{1}=\bar
{m}^{k}$ at the $k$-th inductive step.

The energy after an iterate of a local map is%
\[
I\left(  f_{\varepsilon,i}\left(  x\right)  \right)  =H_{0}\left(  \Phi
_{t}^{\varepsilon}\left(  x_{i-1}+A_{i-1}x\right)  \right)  .
\]
Since
\begin{align*}
&  \frac{d}{dx}\frac{d}{d\varepsilon}H_{0}\left(  \Phi_{t}^{\varepsilon
}\left(  x_{i-1}+A_{i-1}x\right)  \right)  \\
&  =\frac{d}{dx}\left(  \nabla H_{0}\left(  \Phi_{t}^{\varepsilon}\left(
x_{i-1}+A_{i-1}x\right)  \right)  \cdot\frac{\partial\Phi_{t}^{\varepsilon}%
}{\partial\varepsilon}\left(  x_{i-1}+A_{i-1}x\right)  \right)  \\
&  =\left(  \frac{\partial\Phi_{t}^{\varepsilon}}{\partial\varepsilon}\left(
x_{i-1}+A_{i-1}x\right)  \right)  ^{\top}D^{2}H_{0}\left(  \Phi_{t}%
^{\varepsilon}\left(  x_{i-1}+A_{i-1}x\right)  \right)  \frac{\partial\Phi
_{t}^{\varepsilon}}{\partial x}\left(  x_{i-1}+A_{i-1}x\right)  A_{i-1}\\
&  \quad+DH_{0}\left(  \Phi_{t}^{\varepsilon}\left(  x_{i-1}+A_{i-1}x\right)
\right)  \frac{\partial^{2}\Phi_{t}^{\varepsilon}}{\partial x\partial
\varepsilon}\left(  x_{i-1}+A_{i-1}x\right)  A_{i-1}%
\end{align*}
the $\mathbf{v}^{\top}$ used for the computation of $L_{g}$ for a local map
$f_{\varepsilon,i}$ by means of Lemma \ref{lem:Lg-bound-1} is computed as%
\begin{equation}
\mathbf{v}^{\top}=\left[  \left(  \frac{\partial\Phi_{t}^{\varepsilon}%
}{\partial\varepsilon}\left(  \mathbf{x}\right)  \right)  ^{\top}D^{2}%
H_{0}\left(  \Phi_{t}^{\varepsilon}\left(  \mathbf{x}\right)  \right)
\frac{\partial\Phi_{t}^{\varepsilon}}{\partial x}\left(  \mathbf{x}\right)
A_{i-1}+DH_{0}\left(  \Phi_{t}^{\varepsilon}\left(  \mathbf{x}\right)
\right)  \frac{\partial^{2}\Phi_{t}^{\varepsilon}}{\partial x\partial
\varepsilon}\left(  \mathbf{x}\right)  A_{i-1}\right]
.\label{eq:v-for-local-maps}%
\end{equation}
(For the last local map $f_{\varepsilon,N}$ we take $\mathcal{P}^{\varepsilon
}$ instead of $\Phi_{t}^{\varepsilon}$ in (\ref{eq:v-for-local-maps}).)



\bibliographystyle{elsarticle-num} 
\bibliography{ref}

\begin{thebibliography}{10}
\expandafter\ifx\csname url\endcsname\relax
  \def\url#1{\texttt{#1}}\fi
\expandafter\ifx\csname urlprefix\endcsname\relax\def\urlprefix{URL }\fi
\expandafter\ifx\csname href\endcsname\relax
  \def\href#1#2{#2} \def\path#1{#1}\fi

\bibitem{MR0163026}
V.~I. Arnol'd, Instability of dynamical systems with many degrees of freedom,
  Dokl. Akad. Nauk SSSR 156 (1964) 9--12.

\bibitem{DLS1}
A.~Delshams, R.~de~la Llave, T.~M. Seara,
  \href{https://doi-org.ezproxy.fau.edu/10.1016/j.aim.2007.08.014}{Geometric
  properties of the scattering map of a normally hyperbolic invariant
  manifold}, Adv. Math. 217~(3) (2008) 1096--1153.
\newline\urlprefix\url{https://doi-org.ezproxy.fau.edu/10.1016/j.aim.2007.08.014}

\bibitem{MR1373998}
A.~Delshams, R.~Ram\'{\i}rez-Ros,
  \href{https://doi.org/10.1088/0951-7715/9/1/001}{Poincar\'{e}-{M}el'nikov-{A}rnol'd
  method for analytic planar maps}, Nonlinearity 9~(1) (1996) 1--26.
\newblock \href {https://doi.org/10.1088/0951-7715/9/1/001}
  {\path{doi:10.1088/0951-7715/9/1/001}}.
\newline\urlprefix\url{https://doi.org/10.1088/0951-7715/9/1/001}

\bibitem{GLS}
M.~Gidea, R.~de~la Llave, T.~M-Seara,
  \href{https://doi.org/10.1002/cpa.21856}{A general mechanism of diffusion in
  {H}amiltonian systems: qualitative results}, Comm. Pure Appl. Math. 73~(1)
  (2020) 150--209.
\newblock \href {https://doi.org/10.1002/cpa.21856}
  {\path{doi:10.1002/cpa.21856}}.
\newline\urlprefix\url{https://doi.org/10.1002/cpa.21856}

\bibitem{MR4160091}
M.~Gidea, R.~de~la Llave, T.~M. Seara,
  \href{https://doi.org/10.3934/dcds.2020166}{A general mechanism of
  instability in {H}amiltonian systems: skipping along a normally hyperbolic
  invariant manifold}, Discrete Contin. Dyn. Syst. 40~(12) (2020) 6795--6813.
\newblock \href {https://doi.org/10.3934/dcds.2020166}
  {\path{doi:10.3934/dcds.2020166}}.
\newline\urlprefix\url{https://doi.org/10.3934/dcds.2020166}

\bibitem{Jorge}
M.~J. Capiński, J.~Gonzalez, J.-P. Marco, J.~D. {Mireles James},
  \href{https://www.sciencedirect.com/science/article/pii/S1007570421002823}{Computer
  assisted proof of drift orbits along normally hyperbolic manifolds},
  Communications in Nonlinear Science and Numerical Simulation 106 (2022)
  105970.
\newblock \href {https://doi.org/https://doi.org/10.1016/j.cnsns.2021.105970}
  {\path{doi:https://doi.org/10.1016/j.cnsns.2021.105970}}.
\newline\urlprefix\url{https://www.sciencedirect.com/science/article/pii/S1007570421002823}

\bibitem{MR3927089}
A.~Delshams, V.~Kaloshin, A.~de~la Rosa, T.~M. Seara,
  \href{https://doi.org/10.1007/s00220-018-3248-z}{Global instability in the
  restricted planar elliptic three body problem}, Comm. Math. Phys. 366~(3)
  (2019) 1173--1228.
\newblock \href {https://doi.org/10.1007/s00220-018-3248-z}
  {\path{doi:10.1007/s00220-018-3248-z}}.
\newline\urlprefix\url{https://doi.org/10.1007/s00220-018-3248-z}

\bibitem{CG}
M.~J. Capiński, M.~Gidea,
  \href{https://onlinelibrary.wiley.com/doi/abs/10.1002/cpa.22014}{Arnold
  diffusion, quantitative estimates, and stochastic behavior in the three-body
  problem}, Communications on Pure and Applied Mathematics n/a~(n/a).
\newblock \href {https://doi.org/https://doi.org/10.1002/cpa.22014}
  {\path{doi:https://doi.org/10.1002/cpa.22014}}.
\newline\urlprefix\url{https://onlinelibrary.wiley.com/doi/abs/10.1002/cpa.22014}

\bibitem{MR2785975}
M.~J. Capi\'{n}ski, P.~Zgliczy\'{n}ski,
  \href{https://doi.org/10.1088/0951-7715/24/5/002}{Transition tori in the
  planar restricted elliptic three-body problem}, Nonlinearity 24~(5) (2011)
  1395--1432.
\newblock \href {https://doi.org/10.1088/0951-7715/24/5/002}
  {\path{doi:10.1088/0951-7715/24/5/002}}.
\newline\urlprefix\url{https://doi.org/10.1088/0951-7715/24/5/002}

\bibitem{MR3604613}
M.~J. Capi\'{n}ski, M.~Gidea, R.~de~la Llave,
  \href{https://doi.org/10.1088/1361-6544/30/1/329}{Arnold diffusion in the
  planar elliptic restricted three-body problem: mechanism and numerical
  verification}, Nonlinearity 30~(1) (2017) 329--360.
\newblock \href {https://doi.org/10.1088/1361-6544/30/1/329}
  {\path{doi:10.1088/1361-6544/30/1/329}}.
\newline\urlprefix\url{https://doi.org/10.1088/1361-6544/30/1/329}

\bibitem{capd}
T.~Kapela, M.~Mrozek, D.~Wilczak, P.~Zgliczyński,
  \href{https://www.sciencedirect.com/science/article/pii/S1007570420304081}{Capd::dynsys:
  a flexible c++ toolbox for rigorous numerical analysis of dynamical systems},
  Communications in Nonlinear Science and Numerical Simulation (2020)
  105578\href {https://doi.org/https://doi.org/10.1016/j.cnsns.2020.105578}
  {\path{doi:https://doi.org/10.1016/j.cnsns.2020.105578}}.
\newline\urlprefix\url{https://www.sciencedirect.com/science/article/pii/S1007570420304081}

\bibitem{MR1318950}
G.~Alefeld, Inclusion methods for systems of nonlinear equations---the interval
  {N}ewton method and modifications, in: Topics in validated computations
  ({O}ldenburg, 1993), Vol.~5 of Stud. Comput. Math., North-Holland, Amsterdam,
  1994, pp. 7--26.

\bibitem{MR292101}
M.~W. Hirsch, C.~C. Pugh, M.~Shub,
  \href{https://doi.org/10.1090/S0002-9904-1970-12537-X}{Invariant manifolds},
  Bull. Amer. Math. Soc. 76 (1970) 1015--1019.
\newblock \href {https://doi.org/10.1090/S0002-9904-1970-12537-X}
  {\path{doi:10.1090/S0002-9904-1970-12537-X}}.
\newline\urlprefix\url{https://doi.org/10.1090/S0002-9904-1970-12537-X}

\bibitem{MR426056}
N.~Fenichel, \href{https://doi.org/10.1512/iumj.1977.26.26006}{Asymptotic
  stability with rate conditions. {II}}, Indiana Univ. Math. J. 26~(1) (1977)
  81--93.
\newblock \href {https://doi.org/10.1512/iumj.1977.26.26006}
  {\path{doi:10.1512/iumj.1977.26.26006}}.
\newline\urlprefix\url{https://doi.org/10.1512/iumj.1977.26.26006}

\bibitem{MR343314}
N.~Fenichel, \href{https://doi.org/10.1090/S0002-9904-1974-13498-1}{Asymptotic
  stability with rate conditions for dynamical systems}, Bull. Amer. Math. Soc.
  80 (1974) 346--349.
\newblock \href {https://doi.org/10.1090/S0002-9904-1974-13498-1}
  {\path{doi:10.1090/S0002-9904-1974-13498-1}}.
\newline\urlprefix\url{https://doi.org/10.1090/S0002-9904-1974-13498-1}

\bibitem{S}
V.~Szebehely, Theory of Orbits: The Restricted Problem of Three Bodies,
  Academic Press, 1967.

\bibitem{MR1765636}
W.~S. Koon, M.~W. Lo, J.~E. Marsden, S.~D. Ross,
  \href{https://doi.org/10.1063/1.166509}{Heteroclinic connections between
  periodic orbits and resonance transitions in celestial mechanics}, Chaos
  10~(2) (2000) 427--469.
\newblock \href {https://doi.org/10.1063/1.166509}
  {\path{doi:10.1063/1.166509}}.
\newline\urlprefix\url{https://doi.org/10.1063/1.166509}

\bibitem{MR3467671}
A.~Haro, M.~Canadell, J.-L. Figueras, A.~Luque, J.-M. Mondelo,
  \href{https://doi.org/10.1007/978-3-319-29662-3}{The parameterization method
  for invariant manifolds}, Vol. 195 of Applied Mathematical Sciences,
  Springer, [Cham], 2016, from rigorous results to effective computations.
\newblock \href {https://doi.org/10.1007/978-3-319-29662-3}
  {\path{doi:10.1007/978-3-319-29662-3}}.
\newline\urlprefix\url{https://doi.org/10.1007/978-3-319-29662-3}

\bibitem{MR3309008}
M.~Canadell, A.~Haro,
  \href{https://doi.org/10.1007/978-3-319-06953-1_9}{Parameterization method
  for computing quasi-periodic reducible normally hyperbolic invariant tori},
  in: Advances in differential equations and applications, Vol.~4 of SEMA SIMAI
  Springer Ser., Springer, Cham, 2014, pp. 85--94.
\newblock \href {https://doi.org/10.1007/978-3-319-06953-1\_9}
  {\path{doi:10.1007/978-3-319-06953-1\_9}}.
\newline\urlprefix\url{https://doi.org/10.1007/978-3-319-06953-1_9}

\bibitem{MR3397322}
M.~J. Capi\'{n}ski, P.~Zgliczy\'{n}ski,
  \href{https://doi.org/10.1016/j.jde.2015.07.020}{Geometric proof for normally
  hyperbolic invariant manifolds}, J. Differential Equations 259~(11) (2015)
  6215--6286.
\newblock \href {https://doi.org/10.1016/j.jde.2015.07.020}
  {\path{doi:10.1016/j.jde.2015.07.020}}.
\newline\urlprefix\url{https://doi.org/10.1016/j.jde.2015.07.020}

\bibitem{CALLEJA2022106099}
R.~Calleja, A.~Celletti, J.~Gimeno, R.~{de la Llave},
  \href{https://www.sciencedirect.com/science/article/pii/S1007570421004111}{Kam
  quasi-periodic tori for the dissipative spin–orbit problem}, Communications
  in Nonlinear Science and Numerical Simulation 106 (2022) 106099.
\newblock \href {https://doi.org/https://doi.org/10.1016/j.cnsns.2021.106099}
  {\path{doi:https://doi.org/10.1016/j.cnsns.2021.106099}}.
\newline\urlprefix\url{https://www.sciencedirect.com/science/article/pii/S1007570421004111}

\bibitem{MR3032848}
M.~J. Capi\'{n}ski, \href{https://doi.org/10.1137/110847366}{Computer assisted
  existence proofs of {L}yapunov orbits at {$L_2$} and transversal
  intersections of invariant manifolds in the {J}upiter-{S}un {PCR}3{BP}}, SIAM
  J. Appl. Dyn. Syst. 11~(4) (2012) 1723--1753.
\newblock \href {https://doi.org/10.1137/110847366}
  {\path{doi:10.1137/110847366}}.
\newline\urlprefix\url{https://doi.org/10.1137/110847366}

\bibitem{PoincWZ}
T.~Kapela, D.~Wilczak, P.~Zgliczy\'nski, Recent advances in rigorous
  computation of poincar\'e maps, https://arxiv.org/abs/2104.08046.

\end{thebibliography}





\end{document}